 \pgfplotsset{width=3cm, compat=1.16}
\newtheorem{thm}{Theorem}
\newtheorem{lem}[thm]{Lemma}
\newtheorem{as}{Assumption}
\theoremstyle{definition}
\numberwithin{thm}{section}
\numberwithin{equation}{section} 
\DeclareMathOperator \re {Re}
\DeclareMathOperator \im {Im}
\newcommand{\Real}{\mathbb{R}}
\newcommand{\Complex}{\mathbb{C}}
\newcommand{\ve}{\varepsilon}
\newcommand{\loc}{\operatorname{loc}}
\newcommand{\bbo}{\mathbbm{1}}
\newcommand{\Integers}{\mathbb{Z}}
\newcommand{\sgn}{\operatorname{sgn}}
\newcommand{\Rve}{R_{V_\ve}}
\newcommand{\Rvz}{R_{V_0}}
\newcommand{\proj}{\mathcal{P}}
\newcommand{\tl}{\tilde{\lambda}}
\newcommand{\tr}{\operatorname{tr}}
\newcommand{\mcC}{\mathcal{C}}
\title[Persistence and disappearance of negative eigenvalues in dimension two]{Persistence and disappearance of negative eigenvalues in dimension two}
\author{T. J. Christiansen, K. Datchev, and C. Griffin}
\address{Department of Mathematics, University of Missouri, Columbia, MO 65211 USA}
\email{christiansent@missouri.edu}
\address{Department of Mathematics, Purdue University, West Lafayette, IN 47907 USA}
\email{kdatchev@purdue.edu}
\address{Department of Mathematics, University of Pennsylvania, Philadelphia, PA 19104 USA}
\email{cmgrif@sas.upenn.edu}
\begin{document}
\begin{abstract}
We compute asymptotics of  eigenvalues approaching the bottom of the continuous spectrum, and associated resonances, for Schr\"odinger operators in dimension two. We distinguish persistent eigenvalues, which have associated resonances,  from disappearing ones, which do not.  We illustrate the significance of this distinction by computing corresponding scattering phase asymptotics and numerical Breit--Wigner peaks. We prove all of our results for circular wells, and extend some of them to more general problems using recent resolvent techniques.
\end{abstract}
\maketitle

\section{Introduction}

The transition between discrete and continuous spectrum at low energies poses a special challenge in two dimensional scattering. The setting of scattering by a finite well is important for the light it sheds here, with non-analytic behavior exhibited by the simplest example: if $\varepsilon$ is a coupling constant tending to zero, then the ground state eigenvalue %(when it exists)
goes to zero like $e^{-C/|\varepsilon|}$. This was shown in \cite{bb}, in \cite{simon}, and in  \cite[Section~45, Problem~2]{ll}.

The same behavior is observed for certain excited eigenvalues, while others are to leading order linear in the coupling constant. In the examples we study, the latter kind correspond to resonances near zero, and we call them \textit{persistent}, while the former kind do not and we call them \textit{disappearing}: see Section \ref{s:pereig}. There we also establish a connection between persistent eigenvalues and $p$-resonances and/or zero eigenvalues, and between disappearing eigenvalues and $s$-resonances.

We then turn to consequences for the scattering phase. In Section \ref{s:spintro} we compute low energy asymptotics, and in Section \ref{s:bwintro} we interpret our results numerically in terms of Breit--Wigner peaks. These illustrate in a striking way the significance of a persistent eigenvalue for the behavior of the scattering matrix at low energies.

Throughout we focus especially on the case of a circular well, and indicate how some results obtained in that setting by Bessel function analysis can be extended to more general problems using resolvent techniques from \cite{cdobs,cdgen,cdy}. 

\subsection{Persistence and disappearance of eigenvalues} \label{s:pereig}

Consider a Schr\"odinger operator $P=-\Delta +V $ on $\mathbb R^2$, where the potential function $V$ is bounded, compactly supported, and real valued. Denote by $V_\varepsilon$ a family of such potentials, depending smoothly on a real parameter $\varepsilon$. If $P_\varepsilon = -\Delta + V_\varepsilon$ has a bound state at $\varepsilon=0$, with energy $E_0<0$, then at nearby values of $\varepsilon$ there is a family of bound states with energies $E_\varepsilon<0$ depending smoothly on $\varepsilon$. This is the familiar problem of perturbation theory as discussed in Section 38 of \cite{ll}.

The problem becomes richer if the energy level approaches $0$, which is the beginning of the continuous spectrum. To analyze this case, we bring in the notion of resonance. 

In polar coordinates, a general solution to $(P-\lambda^2)u=0$ has a large $r$ expansion of the form
\begin{equation}\label{e:outdef}
u(r,\theta)=\sum_{\ell=-\infty}^\infty  \Big(c_\ell H_\ell^{(1)}(\lambda r) + d_\ell H_\ell^{(2)}(\lambda r)\Big) e^{i\ell \theta}.
\end{equation}
See Section \ref{s:bessel} below for a review of Bessel functions; \eqref{e:outdef} follows from \eqref{e:bessdef} and \eqref{e:bessdiff}.
If $\im \lambda > 0$, then $P$ has an eigenvalue at $E=\lambda^2$ if and only if there is a solution $u \not \equiv 0$ to  $(P-\lambda^2)u=0$ whose expansion \eqref{e:outdef} has all $d_\ell=0$; see \eqref{e:hankellarge}. 

To introduce the notion of resonances and outgoing  solutions of $Pu=0$, we need to work with a larger set on which $H_\ell^{(j)}(\lambda)$, $j=1,\;2$
is single-valued.   We cut $\Complex$ along the negative imaginary
axis and identify this with the set 
$\{\lambda\in \Complex \setminus \{0\}: -\pi/2 \leq \arg \lambda <3\pi/2\}.$ We use the natural branch of  $H_\ell^{(j)}(\lambda)$ which is continuous for $-\pi/2\leq \arg \lambda<3\pi/2$.
As will become apparent in Section \ref{s:lcr}, we are essentially making a  branch cut for the logarithm in a way that suits our initial focus on 
eigenvalues and resonances near the 
continuous spectrum, particularly in the fourth quadrant.

For $\lambda\in \Complex$ with $-\pi/2\leq \arg \lambda<3\pi/2$
%in the complex plane cut along the negative imaginary axis 
%$\mathbb C \setminus -i[0,\infty)$, 
we say that a solution $u$ to $(P-\lambda^2)u=0$ is \textit{outgoing} if its expansion \eqref{e:outdef} has all $d_\ell=0$. By continuity, we extend this condition to $\lambda=0$ by saying that a solution to $Pu=0$ is \textit{outgoing} if it has a large $r$ expansion of the form
\begin{equation}\label{e:outdef0}
u(r,\theta)=\sum_{\ell=-\infty}^\infty  c_\ell r^{-|\ell|} e^{i\ell \theta}. 
\end{equation}
We say that $\lambda \in \mathbb C \setminus -i(0,\infty)$ is a \textit{resonance} and $u\colon \mathbb R^2 \to \mathbb C$ is a \textit{resonant state} if $u$ is outgoing, $u \not \equiv 0$, and $(P-\lambda^2)u=0$. See Section \ref{s:resprelim} for further details. Note that $E=\lambda^2$ is an eigenvalue if and only if $\lambda$ is a resonance and there is a resonant state $u \in L^2$. This always occurs if $\im \lambda >0$, and it can occur for $\im \lambda \le 0$ only when $\lambda =0$, and then if and only if $c_{-1}=c_0=c_1=0$. 

More precisely, given an outgoing solution to $Pu=0$, one says that $P$ has an \emph{$s$-resonance}, and $u$ is an \emph{$s$-resonant state}, if $c_0 \ne 0$. One says that $P$ has a \emph{$p$-resonance}, and $u$ is a \emph{$p$-resonant state}, if $c_0=0$ but $|c_1|+|c_{-1}| > 0$.  One says that $P$ has a \emph{zero eigenvalue}, and $u$ is a \emph{zero eigenfunction}, if $c_0=c_{-1}=c_1=0$ but $u \not\equiv 0$.

Let $V_\varepsilon$ be a family of bounded, compactly supported potentials defined for $\varepsilon \in I$, where $I$ is an open interval containing $0$. Suppose there is a family of eigenvalues $E_\varepsilon$ of $P_\ve$, defined for $\varepsilon \in I$, $\varepsilon <0$, such that $E_\varepsilon \uparrow 0$ when $\varepsilon  \uparrow 0$. We say that $E_\varepsilon$ \textit{persists} along $I$ if there exists a continuous family of outgoing solutions $(P_\varepsilon - \lambda_\varepsilon^2)u_\varepsilon=0$, defined for all $\varepsilon \in I$, such that $\lambda_\varepsilon^2=E_\varepsilon$ for $\varepsilon<0$ and $-\pi/2\leq \arg \lambda<3\pi/2$.

\begin{thm}\label{t:persist}
Let $V_\varepsilon$ be a family of compactly supported real-valued radial functions in $L^\infty(\mathbb R^2)$,  defined for $\varepsilon \in I$, where $I$ is an open interval containing $0$. Suppose $\varepsilon \mapsto V_\varepsilon$ is smooth from $I$ to $L^\infty(\mathbb R^2)$, and that the derivative of this map at $\varepsilon=0$ is a  nonnegative function which is not identically zero. Suppose further that there is a family of energy levels $E_\varepsilon$ such that $E_\varepsilon \uparrow 0$ when $\varepsilon \uparrow 0$. 
\begin{enumerate}
\item If there exists a corresponding family of nonradial eigenfunctions, i.e. if $P_0$ has a $p$-resonance or zero eigenvalue, then $E_\varepsilon$ persists.
\item If every corresponding family of eigenfunctions is radial, i.e. if $P_0$ has an $s$-resonance but no $p$-resonance or zero eigenvalues, then $E_\varepsilon$ does not persist.
\end{enumerate}
\end{thm}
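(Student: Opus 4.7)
The plan is to exploit the radial symmetry: write $P_\varepsilon=\bigoplus_{\ell\in\Integers} P_{\ell,\varepsilon}$, and in each sector analyze the zero set of a single transcendental equation controlling outgoing solutions. Fix $R$ larger than the support of all $V_\varepsilon$ near $\varepsilon=0$, and let $v_{\ell,\varepsilon}(r;\lambda)$ denote the regular-at-zero solution of the radial equation at energy $\lambda^2$. For $r\geq R$ one has $v_{\ell,\varepsilon}(r;\lambda)=A\,H_\ell^{(1)}(\lambda r)+B\,H_\ell^{(2)}(\lambda r)$, and an outgoing solution in the $\ell$-sector exists exactly when the Wronskian $W_\ell(\lambda,\varepsilon):=W\!\left(v_{\ell,\varepsilon}(\cdot;\lambda),H_\ell^{(1)}(\lambda\,\cdot)\right)\!\big|_{r=R}$ vanishes. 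The plan is to track this zero set near $(\lambda,\varepsilon)=(0,0)$.

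For part (1), the hypothesis singles out an $\ell$ with $|\ell|\geq 1$ for which $v_{\ell,0}(\cdot;0)$ is a nontrivial zero-energy state. Since $H_\ell^{(1)}(z)$ has a pure power singularity at $z=0$ for $|\ell|\geq 1$, the rescaled function $\tilde W_\ell:=\lambda^{|\ell|}W_\ell$ extends to a smooth function near $(0,0)$, analytic in $\lambda^2$, and vanishes there. A Hellmann--Feynman calculation---differentiating the radial equation in $\varepsilon$, pairing with $v_{\ell,0}(\cdot;0)$, and integrating on $[0,R]$ against $r\,dr$---shows
\[
\partial_\varepsilon\tilde W_\ell(0,0)\;=\;c_\ell\int_0^R V_0'(r)\,|v_{\ell,0}(r;0)|^2\,r\,dr,\qquad c_\ell\neq 0,
\]
with $V_0':=\partial_\varepsilon V_\varepsilon|_{\varepsilon=0}$. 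This is strictly nonzero by the hypotheses that $V_0'\geq 0$ is not identically zero together with the nontriviality of $v_{\ell,0}$. The implicit function theorem then produces a smooth curve $\varepsilon=\varepsilon_\ell(\lambda)$ of zeros of $\tilde W_\ell$ through the origin; by uniqueness it agrees with the eigenvalue family for $\varepsilon<0$ and extends it continuously across $\varepsilon=0$ as a family of outgoing solutions, giving persistence.

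For part (2), the hypotheses force work in the $\ell=0$ sector. Using $H_0^{(1)}(z)=1+(2i/\pi)(\log(z/2)+\gamma)+O(z^2\log z)$ and $-\lambda H_1^{(1)}(\lambda R)=2i/(\pi R)+O(\lambda^2\log\lambda)$, the equation $W_0(\lambda,\varepsilon)=0$ rearranges to
\[
\log\lambda\;=\;\log(2/R)-\gamma+\frac{i\pi}{2}+\frac{v_{0,\varepsilon}(R;0)}{R\,v_{0,\varepsilon}'(R;0)}+o(1).
\]
The $s$-resonance assumption is $v_{0,0}'(R;0)=0$, and the Hellmann--Feynman identity from part (1) yields $\partial_\varepsilon v_{0,\varepsilon}'(R;0)\big|_{\varepsilon=0}\neq 0$ with a definite sign. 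As $\varepsilon\uparrow 0$ the last term tends to $-\infty$, reproducing the familiar exponentially small eigenvalue on the positive imaginary axis; as $\varepsilon\downarrow 0$ it tends to $+\infty$ with $\im\log\lambda$ pinned at $\pi/2$, so the unique branch of zeros stays on the positive imaginary axis with $|\lambda|\to\infty$ and leaves any neighborhood of the threshold. Applying the implicit function theorem in the coordinates $(1/\log\lambda,\,\varepsilon)$ formalizes both the existence of this single branch and the absence of others in $\{\arg\lambda\in[-\pi/2,3\pi/2)\}$, ruling out any continuation into $\varepsilon>0$ as a near-zero resonance and proving non-persistence. The main obstacle is precisely this $\ell=0$ analysis: the logarithmic singularity of $H_0^{(1)}$ at the origin obstructs a direct implicit function argument in $(\lambda,\varepsilon)$, and controlling the $O(\lambda^2\log\lambda)$ corrections well enough to exclude spurious branches of resonances in the fourth quadrant is the delicate step.
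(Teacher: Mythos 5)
Your sector-by-sector Wronskian setup is a reasonable alternative to the paper's route (the paper reduces to a rank-one Birman--Schwinger determinant built from the low-energy resolvent expansions of \cite{cdgen}, then locates its zeros with explicit model functions and Rouch\'e's theorem), but the step that carries all the weight in your part (1) does not work as written. You propose to apply the implicit function theorem to $\tilde W_\ell(\lambda,\varepsilon)=0$ using $\partial_\varepsilon \tilde W_\ell(0,0)\neq 0$ to produce a ``smooth curve $\varepsilon=\varepsilon_\ell(\lambda)$.'' But $\tilde W_\ell$ is complex-valued while $\varepsilon$ is a single real variable: $\tilde W_\ell=0$ is two real equations, and one real unknown cannot be solved for them. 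The natural fix --- solving for $\lambda$ as a function of $\varepsilon$ --- also fails by plain IFT, because $\partial_\lambda \tilde W_\ell(0,0)=0$: to leading order $\tilde W_\ell\approx A\lambda^2+B\varepsilon$ (for $|\ell|\ge 2$), so the zero set is a two-sheeted square-root branch $\lambda\approx\pm\sqrt{-(B/A)\varepsilon}$, and one must argue via Rouch\'e/Weierstrass preparation (as the paper does in Lemmas \ref{l:2c} and \ref{l:evpert}), check $A\neq0$ as well as $B\neq 0$, and determine the sign of $B/A$ to see which branch is the eigenvalue and which is the fourth-quadrant resonance. None of this is addressed.

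Moreover, your claim that $\tilde W_\ell=\lambda^{|\ell|}W_\ell$ is ``analytic in $\lambda^2$'' is false precisely in the $p$-resonance case $|\ell|=1$: there $\lambda^2 H_1^{(1)\prime}(\lambda R)$ contributes a $\lambda^2\log\lambda$ term multiplying $v_{1,\varepsilon}(R)$, which is nonzero at a $p$-resonance, so the model equation is $A\lambda^2\log\lambda+B\lambda^2+C\varepsilon=0$. This is why the paper's Lemma \ref{l:pwrc}/\ref{l:pwrg} answer involves the Lambert $W$ function rather than $\sqrt{c\varepsilon}$, and why $\lambda_\varepsilon$ is continuous but not smooth in $\varepsilon$ at $0$; your uniform treatment of all $|\ell|\ge1$ misses one of the two cases of part (1). (There are also $\lambda^{2\ell}\log\lambda$ corrections for $|\ell|=2$ that affect error terms.) Your part (2) sketch is essentially the mechanism of Lemmas \ref{l:0mc} and \ref{l:srs} --- the $1/\varepsilon$ term in $\log\lambda$ changes sign, sending $|\lambda_\varepsilon|$ to $0$ for $\varepsilon<0$ and out of every neighborhood of $0$ for $\varepsilon>0$ --- and is correct in outline, though non-persistence also requires excluding near-zero resonances in the sectors $\ell\neq0$ for small $\varepsilon>0$ (the paper gets this from boundedness of $\chi R_{V_0}(i\kappa)\chi$ in those sectors), which you do not mention. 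The Hellmann--Feynman identification of $\partial_\varepsilon\tilde W_\ell$ with $\int \dot V_0|\psi|^2$ is sound and matches the paper's $\alpha_0$.
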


Our key example is the circular well, given by
\begin{equation}\label{e:circwelldef}
V(r)=-a^2 \bbo_{\rho}(r) = \begin{cases} - a^2, \qquad &r \le \rho, \\ 0, \qquad &r > \rho, \end{cases}
\end{equation}
for some positive $a$ and $\rho$; then we may consider $\rho>0$ fixed and $a = a_\varepsilon = j - \varepsilon$, where $j$ is any zero of a Bessel function.  Figure \ref{f:3foldres} illustrates persistence in the $\ell=1,\,2,\,3$ modes,  while Figure \ref{f:ell=0} illustrates disappearance in the $\ell=0$ mode.

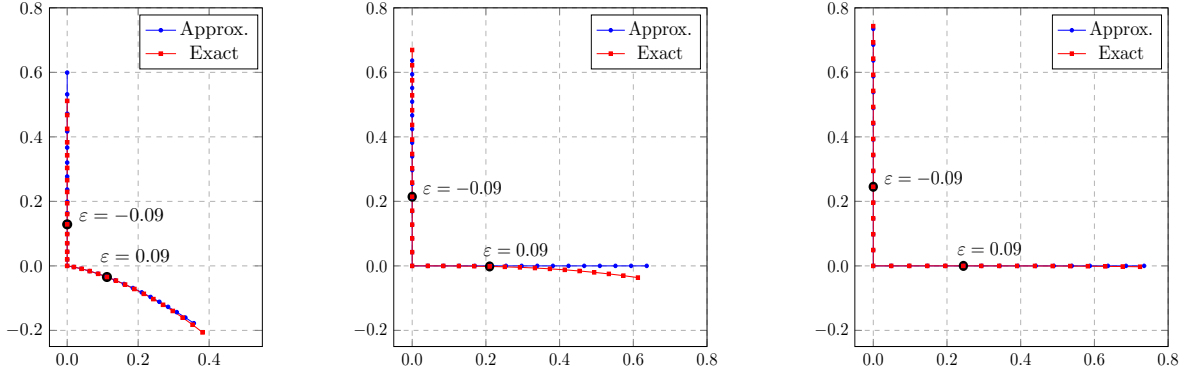
\begin{figure}[ht] 
  \centering
\resizebox{3.5 cm}{5 cm}{%
  \begin{tikzpicture}
\begin{axis}[
y tick label style={
        /pgf/number format/.cd,
            fixed,
            fixed zerofill,
            precision=1,
        /tikz/.cd
    },
    x tick label style={
        /pgf/number format/.cd,
            fixed,
            fixed zerofill,
            precision=1,
        /tikz/.cd
    },
%title={Plot of $\lambda$ for $\ell=1$, $a^2 = j_{0,1}^2 - \varepsilon$},
%ylabel=\small{$\operatorname{Im} z$},
%xlabel=\small{$\operatorname{Re} z$},
          grid=major, % Display a grid
          grid style={dashed,black!30}, % Set the style
tick label style={font=\small}, 
width=0.35\linewidth, 
height=0.5\linewidth,
xmin = -0.05, xmax = 0.55,
ymin = -0.25, ymax = 0.8,
]
\addplot+[mark options={scale=0.5}] table [x=ReGuess, y=ImGuess, col sep=comma] {dataset1.csv};
\addlegendentry{Approx.}
\addplot+[mark options={scale=0.5}] table [x=Re, y=Im, col sep=comma] {dataset1.csv};
\addlegendentry{Exact}
\node[label={[yshift=2.5ex,xshift=9ex]180:{$\varepsilon = 0.09$}},circle,fill,inner sep=2pt] at (axis cs:0.1119944,-0.0344571) {};
\node[label={[yshift=1ex,xshift=13ex]180:{$\varepsilon = -0.09$}},circle,fill,inner sep=2pt] at (axis cs:0.,0.1287651) {};
\end{axis}
\end{tikzpicture}}\hspace{1cm}
\resizebox{5 cm}{5 cm}{%
  \begin{tikzpicture}
\begin{axis}[
y tick label style={
        /pgf/number format/.cd,
            fixed,
            fixed zerofill,
            precision=1,
        /tikz/.cd
    },
    x tick label style={
        /pgf/number format/.cd,
            fixed,
            fixed zerofill,
            precision=1,
        /tikz/.cd
    },
%title={Plot of $\lambda$ for $\ell=2$, $a^2 = j_{1,1}^2 - \varepsilon$},
%ylabel=\small{$\operatorname{Im} z$},
%xlabel=\small{$\operatorname{Re} z$},
          grid=major, % Display a grid
          grid style={dashed,black!30}, % Set the style
tick label style={font=\small}, 
width=0.5\linewidth, 
height=0.5\linewidth,
xmin = -0.05, xmax = 0.8,
ymin = -0.25, ymax = 0.8,
]
\addplot+[mark options={scale=0.5}] table [x=ReGuess, y=ImGuess, col sep=comma] {dataset2.csv};
\addlegendentry{Approx.}
\addplot+[mark options={scale=0.5}] table [x=Re, y=Im, col sep=comma] {dataset2.csv};
\addlegendentry{Exact}
\node[label={[yshift=2ex,xshift=9ex]180:{$\varepsilon = 0.09$}},circle,fill,inner sep=2pt] at (axis cs:0.2100356,-0.0017315) {};
\node[label={[yshift=1ex,xshift=13ex]180:{$\varepsilon = -0.09$}},circle,fill,inner sep=2pt] at (axis cs:0.,0.2143996) {};
\end{axis}
\end{tikzpicture}}\hspace{1cm}
\resizebox{5 cm}{5 cm}{%
  \begin{tikzpicture}
\begin{axis}[
y tick label style={
        /pgf/number format/.cd,
            fixed,
            fixed zerofill,
            precision=1,
        /tikz/.cd
    },
    x tick label style={
        /pgf/number format/.cd,
            fixed,
            fixed zerofill,
            precision=1,
        /tikz/.cd
    },
%title={Plot of $\lambda$ for $\ell=3$, $a^2 = j_{2,1}^2 - \varepsilon$},
%ylabel=\small{$\operatorname{Im} z$},
%xlabel=\small{$\operatorname{Re} z$},
          grid=major, % Display a grid
          grid style={dashed,black!30}, % Set the style
tick label style={font=\small}, 
width=0.5\linewidth, 
height=0.5\linewidth,
xmin = -0.05, xmax = 0.8,
ymin = -0.25, ymax = 0.8,
]
\addplot+[mark options={scale=0.5}] table [x=ReGuess, y=ImGuess, col sep=comma] {dataset3.csv};
\addlegendentry{Approx.}
\addplot+[mark options={scale=0.5}] table [x=Re, y=Im, col sep=comma] {dataset3.csv};
\addlegendentry{Exact}
\node[label={[yshift=2ex,xshift=9ex]180:{$\varepsilon = 0.09$}},circle,fill,inner sep=2pt] at (axis cs:0.2445582,-0.0000141) {};
\node[label={[yshift=1ex,xshift=13ex]180:{$\varepsilon = -0.09$}},circle,fill,inner sep=2pt] at (axis cs:0.,0.2453089) {};
\end{axis}
\end{tikzpicture}}
\caption{Plot of $\lambda_\varepsilon$ in $\mathbb C \setminus -i(0,\infty)$ for $a^2 = j_{\ell-1,1}^2 - \varepsilon$, with $\ell=1$ (left) $\ell=2$ (middle) and $\ell=3$ (right). The values of $\varepsilon$ range from $\varepsilon = -0.81$ to $\varepsilon=0.81$. When $\varepsilon<0$, $\lambda_\varepsilon$ is on the positive imaginary axis and corresponds to an eigenvalue, when $\varepsilon>0$, $\lambda_\varepsilon$ is in the fourth quadrant and is a resonance.}\label{f:3foldres}
\end{figure}

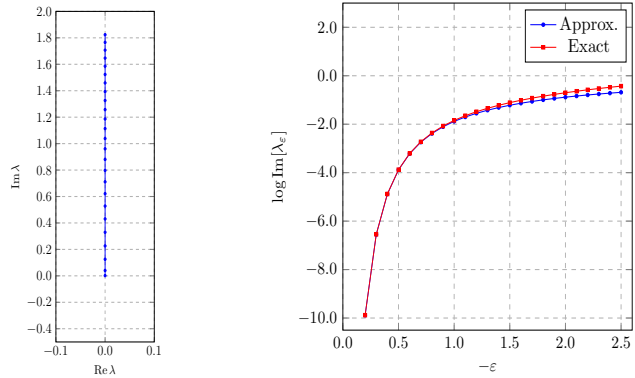
\begin{figure}[ht] 
  \centering
    \resizebox{2.1 cm}{5 cm}{%
  \begin{tikzpicture}
\begin{axis}[
y tick label style={
        /pgf/number format/.cd,
            fixed,
            fixed zerofill,
            precision=1,
        /tikz/.cd
    },
    x tick label style={
        /pgf/number format/.cd,
            fixed,
            fixed zerofill,
            precision=1,
        /tikz/.cd
    },
ylabel=$\operatorname{Im}\lambda$,
xlabel=$\operatorname{Re}\lambda$,
          grid=major, % Display a grid
          grid style={dashed,black!30}, % Set the style
tick label style={font=\large}, 
width=0.3\linewidth, 
height=0.7\linewidth,
xmin = -.1, xmax =.1, xtick={-0.1,0,0.1},
ymin = -0.5, ymax = 2,
]
\addplot+[mark options={scale=0.5}] table [x=Re, y=Im, col sep=comma] {dataset0add.csv};
\end{axis}
\end{tikzpicture}}
\hspace{1cm}
 \resizebox{5 cm}{5 cm}{
  \begin{tikzpicture}
\begin{axis}[
y tick label style={
        /pgf/number format/.cd,
            fixed,
            fixed zerofill,
            precision=1,
        /tikz/.cd
    },
    x tick label style={
        /pgf/number format/.cd,
            fixed,
            fixed zerofill,
            precision=1,
        /tikz/.cd
    },
ylabel=\small{$\log\operatorname{Im}[\lambda_\varepsilon]$},
xlabel=\small{$-\varepsilon$},
          grid=major, % Display a grid
          grid style={dashed,black!30}, % Set the style
tick label style={font=\small}, 
width=0.5\linewidth, 
height=0.5\linewidth,
xmin = 0, xmax =2.6,
ymin = -10.5, ymax = 3,
 xtick={0.0,0.5,1.0,1.5,2.0,2.5},
]
\addplot+[mark options={scale=0.5}] table [x=epsilon, y=ImGuess, col sep=comma] {dataset0.csv};
\addlegendentry{Approx.}
\addplot+[mark options={scale=0.5}] table [x=epsilon, y=Im, col sep=comma] {dataset0.csv};
\addlegendentry{Exact}
\end{axis}
\end{tikzpicture}}
%   \begin{tikzpicture}
% \begin{axis}[
% legend pos= south east,
% y tick label style={
%         /pgf/number format/.cd,
%             fixed,
%             fixed zerofill,
%             precision=1,
%         /tikz/.cd
%     },
%     x tick label style={
%         /pgf/number format/.cd,
%             fixed,
%             fixed zerofill,
%             precision=2,
%         /tikz/.cd
%     },
% title={Plot of $\log(\operatorname{Im} z)$ for $\ell=0$, $a^2 = j_{1,1}^2 + \varepsilon$ for $\varepsilon\in [-0.9,0.9]$, 20 points},
% ylabel=\small{$\log(\operatorname{Im} z)$},
% xlabel=\small{$\varepsilon$},
%           grid=major, % Display a grid
%           grid style={dashed,black!30}, % Set the style
% tick label style={font=\small}, 
% width=0.5\linewidth, 
% height=0.5\linewidth,
% xmin = -0.05, xmax = 0.85,
% ymin = -45, ymax = 0,
% % xtick={0,10,20,30,40,50,60,70,80,90},
% % ytick={0,200, 400, 600, 800, 1000, 1200},
% ]
% \addplot+[mark options={scale=0.5}] table [x=epsilon, y=ImGuess, col sep=comma] {dataset0.csv};
% \addlegendentry{Approx.}
% \addplot+[mark options={scale=0.5}] table [x=epsilon, y=Im, col sep=comma] {dataset0.csv};
% \addlegendentry{Exact}
% \end{axis}
% \end{tikzpicture}
\caption{Plot of $\lambda_\ve$ for $\ell=0$ mode, with $\ve \in \{-k\Delta\varepsilon : k=2,\ldots,25\}$, with $\Delta\varepsilon = 0.3$ in the first plot and $\Delta\varepsilon=0.1$ in the second plot. Approx. is $\lambda_\varepsilon = i \frac 2 \rho \exp(\frac 2 {\rho^2\varepsilon} - \gamma)$.}\label{f:ell=0}
\end{figure}

%In this figure, a trichotomy emerges according to which terms appear in the expansion \eqref{e:outdef0} of the corresponding state at zero energy. This trichotomy is important throughout the subject.

So far we have ignored multiplicities. Our next theorem takes them into account and also gives more detailed information about the behavior of the resonance near $\ve=0$. We work with joint eigenfunctions of $P_\varepsilon$ and $\partial_\theta$.

For real-valued $V$, the resonances of $-\Delta+V$ are symmetric in the sense that if $\lambda=|\lambda| e^{i\arg \lambda}$ is a resonance, so is $|\lambda|e^{i(\pi -\arg \lambda)}.$  In the next theorem we concentrate on resonances in the fourth quadrant for simplicity.

\begin{thm} \label{t:lt} Let $V_\varepsilon$ and $E_\ve$ satisfy the conditions of Theorem \ref{t:persist}.
%, and in addition be $C^3$ in $\ve$.  
Suppose for $\varepsilon<0$ $\psi_\varepsilon\in L^2(\Real)$, $\| \psi_\ve\|=1$, $(-\Delta+V_\varepsilon)\psi =E_\varepsilon \psi_\varepsilon$, $\partial _\theta \psi _\ve=i \ell \psi_\ve,$ and 
 $E_\varepsilon\uparrow 0$ as $\ve \uparrow 0$.
 %\footnote{As currently written, using $V_\ve$ is $C^3$ in $\ve$.  The advantage is nice errors.  $V_\ve$ is $C^1$ in $\ve$ would work, too, but errors are worse.}
 % and let $m_e$ denote the number of linearly independent eigenfunctions with this behavior and with $l\not =0$.
\begin{enumerate}
\item If $| \ell |>1$, then $P_0$ has a zero eigenvalue and  there is a constant $\alpha_0(\ell)>0$ and an $\varepsilon _0>0$ so that for $0<\varepsilon<\varepsilon_0$, $P_\varepsilon$ has a resonance in the fourth quadrant satisfying 
$$\lambda_\varepsilon = \sqrt{ \alpha_0(\ell) \varepsilon}+O(\ve^{3/2}(1+\delta_{|\ell|,2}\log \ve)).$$  Counting with multiplicity, there is a resonance for each $\ell$ satisfying these conditions.
\item If $|\ell | =1$, then $P_0$ has a $p$-resonance and there are constants $\alpha_0>0$, $b=\re b +i\pi/2$ and an $\varepsilon _0>0$ so that for $0<\varepsilon<\varepsilon_0$, $P_\varepsilon$ has a resonance
 in the fourth quadrant satisfying $$\lambda_{\ve} = \exp\left( b+ \frac{1}{2} W_{-1}(2\ve \alpha_0 e^{-2 \re b})\right)(1+O(\ve)).$$   Counting with multiplicity, there is a resonance for each $\ell$ satisfying these conditions.
\end{enumerate}
\end{thm}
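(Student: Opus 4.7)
The plan is to reduce to a one-dimensional problem in each angular sector, set up a Birman--Schwinger-type characteristic equation for resonances, and then expand at $(\lambda,\ve)=(0,0)$. Because $V_\ve$ is radial, $P_\ve$ commutes with $\partial_\theta$; writing $\psi_\ve(r,\theta)=f_\ve(r)e^{i\ell\theta}$ reduces the outgoing equation $(P_\ve-\lambda^2)\psi_\ve=0$ to a radial ODE with $f_\ve(r)\sim c\,H_\ell^{(1)}(\lambda r)$ at infinity. Rewriting as $f_\ve=-\Rvz(\lambda)(V_\ve-V_0)f_\ve$ and symmetrizing by $|V_\ve-V_0|^{1/2}$ turns this into a Fredholm equation whose unit-eigenvalue condition reduces, in the sector $\ell$, to a scalar equation $F_\ell(\lambda,\ve)=0$. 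With $\dot V:=\partial_\ve V_\ve|_{\ve=0}\ge 0$ and $V_\ve-V_0=\ve\dot V+O(\ve^2)$, the behavior of $F_\ell$ is controlled by the expansion of $\Rvz(\lambda)$ at $\lambda=0$ in the sector $\ell$: for a circular well this is obtained by matching $J_\ell$ inside against $H_\ell^{(1)}$ outside at $r=\rho$; for general radial $V_0$ one uses the resolvent expansions from \cite{cdobs,cdgen,cdy}.

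For part (1), the hypothesis forces a nontrivial zero eigenfunction $u_0\in L^2$ of $P_0$ in the sector $\ell$, and therefore
\begin{equation*}
\Rvz(\lambda)=-\lambda^{-2}\Pi_0+\Rvz(0)+O(\lambda^2\log\lambda),
\end{equation*}
where $\Pi_0$ is the orthogonal projection onto $u_0$. Substituting into $F_\ell=0$ and extracting the coefficient along $u_0$ yields, at leading order,
\begin{equation*}
\lambda^2=\alpha_0(\ell)\,\ve+\text{higher},\qquad \alpha_0(\ell)=\langle u_0,\dot V\,u_0\rangle.
\end{equation*}
Positivity $\alpha_0(\ell)>0$ holds because $\dot V\ge 0$ is not identically zero and unique continuation (valid for bounded $V_0$) forbids $u_0$ from vanishing on the open set where $\dot V>0$. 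The remainder is analytic in $\lambda^2$ apart from a $\lambda^{2|\ell|}\log\lambda$ correction coming from $H_\ell^{(1)}$; applying the implicit function theorem in $\lambda^2$ and taking the square root on $-\pi/2\le\arg\lambda<3\pi/2$ gives $\lambda_\ve=\sqrt{\alpha_0(\ell)\,\ve}+O(\ve^{3/2})$, with an extra $\log\ve$ exactly when $|\ell|=2$, since that is the first value at which the logarithmic correction sits at the order of the natural $\ve^{3/2}$ error.

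For part (2), the hypothesis now supplies a $p$-resonance state $u_0\sim c_{\pm 1}r^{-1}$. From
\begin{equation*}
H_1^{(1)}(z)=-\tfrac{2i}{\pi z}+\tfrac{i}{\pi}z\log z+\beta\, z+O(z^3\log z),
\end{equation*}
the expansion of $\Rvz$ in the $|\ell|=1$ sector replaces the $\lambda^{-2}$ pole by a leading singular term proportional to $(\log\lambda-b)^{-1}\Pi_0$, with $b=\re b+i\pi/2$; the value $\im b=\pi/2$ is inherited from the coefficient $-2i/\pi$ of $1/z$ in $H_1^{(1)}$. Projecting $F_1=0$ onto $u_0$ then produces
\begin{equation*}
\lambda^2(\log\lambda-b)=-\alpha_0\,\ve+O\!\left(\lambda^4\log\lambda,\ \ve\lambda^2\right),\qquad \alpha_0=\langle u_0,\dot V\,u_0\rangle>0.
\end{equation*}
Setting $w=2(\log\lambda-b)$, so $\lambda=\exp(b+w/2)$, and using $e^{2i\im b}=e^{i\pi}=-1$, this transforms into $w\,e^w=2\alpha_0\ve\,e^{-2\re b}\bigl(1+O(\ve)\bigr)$. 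The regime $\lambda\to 0$ forces $\re w\to -\infty$, which selects the $W_{-1}$ branch of the Lambert function; inversion gives $w=W_{-1}(2\alpha_0\ve\,e^{-2\re b})(1+O(\ve))$, and undoing the substitution produces the stated formula. The combination of $\im b=\pi/2$ with $\im W_{-1}\approx-\pi$ on this branch forces $\arg\lambda_\ve$ to be small and negative, placing $\lambda_\ve$ in the fourth quadrant.

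The main obstacle is part (2): extracting the precise logarithmic structure of $\Rvz$ in the $|\ell|=1$ sector to identify $b$ (including $\im b=\pi/2$), and then handling the Lambert $W_{-1}$ branch so that $\lambda_\ve$ indeed lies in the fourth quadrant. Multiplicities in both parts are handled by running the argument independently in each angular sector $\ell$ satisfying the hypothesis; the sectors decouple under the radial assumption, and each contributes one fourth-quadrant resonance by the implicit function or Lambert inversion argument above.
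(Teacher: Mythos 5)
Your proposal follows essentially the same route as the paper: Fourier decomposition in $\theta$, a Birman--Schwinger characteristic equation built from $(I+(V_\ve-V_0)\Rvz(\lambda)\chi)^{-1}$, the low-energy expansion of $\Rvz$ in the relevant mode (pole $-\lambda^{-2}\psi\otimes\psi$ for a zero eigenvalue, $\frac{1}{\lambda^{2}(\log\lambda-b)}\psi\otimes\psi$ for a $p$-resonance), and then a square-root or Lambert-$W$ inversion of the resulting model equation. The constant $\alpha_0=\langle \dot V_0\psi,\psi\rangle$, the substitution $w=2(\log\lambda-b)$, and the use of $e^{2i\im b}=-1$ all match the paper's treatment.

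Three points to tighten. First, your branch selection in part (2) is misjustified as written: $\re W_n\to-\infty$ as the argument tends to $0$ for \emph{every} $n\neq 0$, so ``$\re w\to-\infty$'' only rules out $W_0$. What singles out $n=-1$ is the constraint $-\pi/2<\arg\lambda_\ve<0$; your closing observation that $\im b=\pi/2$ combines with $\im W_{-1}\to-\pi$ to give small negative $\arg\lambda_\ve$ is in fact the selection criterion, not a consequence of it. (The other branches $n\neq 0,-1$ give genuine resonances on other sheets of the logarithmic surface, as the paper shows.) Second, the singular term in the $p$-resonance expansion is $\frac{1}{\lambda^{2}(\log\lambda-b)}\Pi_0$, not ``$(\log\lambda-b)^{-1}\Pi_0$'' as your prose states; your subsequent working equation $\lambda^{2}(\log\lambda-b)=-\alpha_0\ve+\cdots$ is the correct one, so this is a slip rather than an error in the computation. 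Third, because of the $\log\lambda$ corrections the characteristic function is not analytic in $\lambda^{2}$ at $0$, so the final localization of the zero should be carried out by Rouch\'e's theorem applied to the pair (characteristic function, model function) on a small circle about the model zero, as the paper does, rather than by the implicit function theorem; this also cleanly yields the simplicity/multiplicity statement. None of these changes the substance of the argument.
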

In fact, as we shall describe in Section \ref{s:lcr}, in the setting of Theorem \ref{t:lt} for each such $\ell$ there are many resonances which are related to the negative eigenvalue, and they exist for both 
negative and positive $\varepsilon$. Moreover, we find explicit expressions for $\alpha_0$ in terms of the null space of $-\Delta +V_0$ and $(\partial_\ve V_\ve)\upharpoonright_{\ve=0}$ in the general case,
see Lemmas \ref{l:evpert} and \ref{l:pwrg}  in Section \ref{s:lcr}.   In case 
 $V_\ve=(-a_0^2+\ve)\bbo_\rho$  these $\alpha_0$ can be written
 in terms of $\rho$ and $\ell,$ see 
 Lemmas \ref{l:2c} and \ref{l:pwrc} in Section \ref{s:cwr}.
 In addition, Lemmas \ref{l:evpert} and \ref{l:pwrg}
 include a further term in
the expansion for $\ell\not = \pm 2$ that helps explain the 
difference between the center and right graphs in Figure \ref{f:3foldres}. 
 
%In fact, the behavior is much richer.  The resolvent has a %meromorphic continuation to logarithmic cover of the %complex plane.  Each persistent eigenvalue corresponds to %many other resonances.  We discuss this further in Section \ref{s:lcr}.

\subsubsection{Further background and context}

The paper \cite{ha} surveys some results in perturbation of
eigenvalues and resonances.  We mention just a few papers most
closely related to studying eigenvalues and resonances
of Schr\"{o}dinger operators near
$0$, particularly in dimension two, and direct
the reader to \cite{ha} for further results and references.

Klaus and Simon \cite{klaussimon} and Holden \cite{ho}   study the possible expansions
of $E_\ve\uparrow 0$ as $\ve\uparrow 0$ in any dimension
and for 
potentials which need not be radially symmetric.  The results of
\cite{klaussimon} show that dimension $2$ holds the most different possibilities.  These papers do not explore whether or not the eigenvalues persist.  Compared
to \cite{klaussimon}, our results hold for a smaller class of potentials, but our leading terms are more detailed and cover resonances as well.  The paper \cite{mdwm} contains
some nice diagrams of  numerically computed eigenvalues and
resonances for the Schr\"{o}dinger operator
with circular well potential, $-\Delta -a^2 \bbo_{\rho}$, on $\Real^2$.  These eigenvalues and resonances 
are not just those very near $0$, and the diagrams indicate how the resonances and eigenvalues move as $a$ varies. In \cite{gk} Grigis and Klopp study 
eigenvalues and resonances 
 near the boundary of the continuous spectrum
 for perturbations of 
 of Schr\"{o}dinger operators with periodic 
potentials.  Their results do not include
dimension two, but we note that their results
in even dimension at least four resemble our Lemma \ref{l:evpert}, on perturbing an eigenvalue at $0$. 

For Schr\"{o}dinger operators on $\Real^3$ depending 
on a parameter the papers \cite{ra,gh} study the transition from eigenvalues
to resonances at the bottom of the continuous spectrum.
In \cite{jn}, Jensen and Nenciu study the perturbation of 
an eigenvalue at the bottom of the continuous spectrum using a
time-dependent notion of resonances.  Their
results are quite general, and apply to Schr\"{o}dinger operators in dimension $3$, and to certain two channel operators 
in dimension $1$ and $3$ with potentials which decay at infinity but need not be compactly supported.  They
do not study the even-dimensional case.  In fact, near the end of \cite{ha}, Harrell asks ``Can the restriction to odd dimensions in articles such as \cite{jn} be relaxed?"  Our results
in Theorem \ref{t:lt} and Section \ref{s:riemsurf}
are a step towards doing so.

Most recently, perturbations of resonances at the bottom of the continuous spectrum have seen  applications in the study of subwavelength resonator systems: see \cite{adh} for a review and references. In that setting, one seeks to use small objects to strongly scatter waves with comparatively large wavelengths. In \cite{adh}, and in references therein, asymptotics are computed for resonances obtained by perturbing off of a resonance at zero for a certain non-self-adjoint problem.

Our results in Section \ref{s:lcr} use results about the 
resolvent expansion at $0$ for a Schr\"{o}dinger operator 
on $\Real^2.$  Results in this direction include \cite{vai89, bgd, jn01, stwa, cdgen}.

\subsection{Scattering phase asymptotics}\label{s:spintro}
Theorems \ref{t:persist} and \ref{t:lt} examine the importance of the different kinds of resonance or eigenvalue at zero energy for perturbation theory. If we consider a fixed circular well, we can illustrate their importance for scattering phase asymptotics.  The scattering phase is defined by
\begin{equation}\label{e:sigmadef}
\sigma(\lambda) = \frac 1 {2\pi i} \log \det S(\lambda) = \sum_{\ell} \sigma_\ell(\lambda),
\end{equation}
where $S(\lambda)$ is the scattering matrix and the $\sigma_\ell(\lambda)$ are the phase shifts, i.e. $e^{2\pi i \sigma_\ell(\lambda)}$ are the eigenvalues of  $S(\lambda)$, normalized so that $\sigma_\ell(0) = 0$. When $V$ is radial, the set $\{e^{i\ell\theta}\}_{\ell \in \mathbb Z}$ is an orthogonal basis for $S(\lambda)$, and we may take each $\sigma_\ell(\lambda)$ to be the phase shift corresponding to $e^{i\ell\theta}$. See Sections~2.6 and 3.9 of \cite{dyatlovzworski} for a more general introduction.

\begin{thm} \label{t:spintro}Let $P = -\Delta - a^2 \bbo_\rho$ for some positive $a$ and $\rho$. Let $\gamma=-\Gamma'(1) = 0.577\dots$ Then, as $\lambda \downarrow 0$
\begin{enumerate}
\item If $P$ has no resonance at zero, then there is a real $b$ such that
\[
\sigma'(\lambda) = \frac {-1}\lambda \frac 2{4(\log \lambda + b)^2 +  {\pi^2}} + O(\lambda),
\]
\item If $P$ has a $p$-resonance at zero, then %there is a real $b$ such that
\[
\sigma'(\lambda) = \frac {-1}\lambda \bigg(\frac 2{4(\log \lambda +\log(\rho/2)+\gamma)^2 +  {\pi^2}} + \frac 1{(\log \lambda + \log(\rho/2)+\gamma - \frac 12)^2 + \frac {\pi^2}4}\bigg) + O(\lambda),
\]
\item If $P$ has an $s$-resonance at zero, then
\[
\sigma'(\lambda) = - \frac 32 \rho^2 \lambda + O(\lambda^3 \log \lambda).
\]
\end{enumerate}
\end{thm}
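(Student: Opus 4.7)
The plan is to use the radial symmetry of $V = -a^2\bbo_\rho$ to decompose $\sigma(\lambda) = \sum_{\ell \in \Integers}\sigma_\ell(\lambda)$ into partial-wave phase shifts, then compute each $\sigma_\ell$ explicitly by Bessel matching at $r = \rho$. In the $\ell$-th sector, the regular interior solution of $(P - \lambda^2)u = 0$ is $J_{|\ell|}(kr)$ with $k = \sqrt{\lambda^2 + a^2}$, while outside $\rho$ the solutions are spanned by $H_{|\ell|}^{(1)}(\lambda r)$ and $H_{|\ell|}^{(2)}(\lambda r)$. Matching value and derivative at $r = \rho$ and using $H_\ell^{(2)} = \overline{H_\ell^{(1)}}$ for $\lambda > 0$ gives $\sigma_\ell'(\lambda) = -\tfrac{1}{\pi}\partial_\lambda\arg F_\ell(\lambda)$ with $\sigma_\ell(0) = 0$, where
\[
F_\ell(\lambda) := k J_{|\ell|}'(k\rho)\,H_{|\ell|}^{(1)}(\lambda\rho) - \lambda J_{|\ell|}(k\rho)\,{H_{|\ell|}^{(1)}}'(\lambda\rho),
\]
and $\sigma_{-\ell}' = \sigma_\ell'$ since $F_\ell$ depends only on $|\ell|$.

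Next I would identify the three cases by matching a zero-energy outgoing solution of $Pu = 0$ to $J_{|\ell|}(ar)$ at $r = \rho$: the $s$-resonance corresponds to $J_1(a\rho) = 0$ and the $p$-resonance to $J_0(a\rho) = 0$, and these are mutually exclusive by the interlacing of zeros of $J_0$ and $J_1$. I would then insert into $F_\ell$ the standard small-argument expansions
\[
H_0^{(1)}(x) = 1 + \tfrac{2i}{\pi}\bigl(\log(x/2) + \gamma\bigr) + O(x^2 \log x), \qquad H_1^{(1)}(x) = -\tfrac{2i}{\pi x} + \tfrac{x}{2} + \tfrac{ix}{\pi}\bigl(\log(x/2) + \gamma - \tfrac{1}{2}\bigr) + O(x^3 \log x),
\]
together with Taylor expansions of $J_{|\ell|}(k\rho)$ and $J_{|\ell|}'(k\rho)$ around $k = a$ via $k = a + \lambda^2/(2a) + O(\lambda^4)$.

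In Case (1), $J_0(a\rho), J_1(a\rho) \ne 0$ makes the $\ell = 0$ channel give $F_0(\lambda) \propto 1 + \tfrac{2i}{\pi}(\log\lambda + b)$ with $b = \log(\rho/2) + \gamma + J_0(a\rho)/(a\rho J_1(a\rho))$, and a short $\arctan$ differentiation yields the first formula; all $\ell \ne 0$ channels contribute $O(\lambda^{2|\ell|-1})$, absorbed in the error. In Case (2), $J_0(a\rho) = 0$ pins $b$ to $\log(\rho/2) + \gamma$, while simultaneously the $\ell = \pm 1$ channels become logarithmically dominant through the $ix\log x$ piece of $H_1^{(1)}$, each contributing $\tfrac{-1}{2\lambda}\cdot\tfrac{1}{(\log\lambda + \log(\rho/2) + \gamma - \tfrac{1}{2})^2 + \pi^2/4}$, which after doubling by the $\ell = \pm 1$ symmetry is the second term. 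In Case (3), $J_1(a\rho) = 0$ forces $k J_1(k\rho) = J_0(a\rho)\rho\lambda^2/2 + O(\lambda^4)$, which cancels the leading logarithm of $F_0$ and gives $\sigma_0'(\lambda) = O(\lambda^3 \log\lambda)$; however the $\ell = \pm 1$ channels now have real part of order $\lambda$ against imaginary part of order $1/\lambda$, yielding $\sigma_{\pm 1}' = -\tfrac{1}{2}\rho^2\lambda + O(\lambda^3\log\lambda)$, and via the Bessel recurrence $J_2(a\rho) = -J_0(a\rho)$ the $\ell = \pm 2$ channels contribute $\sigma_{\pm 2}' = -\tfrac{1}{4}\rho^2\lambda + O(\lambda^3\log\lambda)$, while $|\ell| \ge 3$ channels are only $O(\lambda^5)$. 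Summing gives $-\tfrac{3}{2}\rho^2\lambda + O(\lambda^3\log\lambda)$.

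The main obstacle is Case (3): one must track the three pairs of channels $\ell = 0, \pm 1, \pm 2$ that all contribute at order $\lambda$ once $J_1(a\rho) = 0$ triggers the subleading expansions, and in particular carry the Hankel and Bessel expansions one extra order to verify that the $\ell = 0$ contribution really drops to $O(\lambda^3\log\lambda)$ rather than leaking into the leading coefficient. The tail over $|\ell| \ge 3$ is uniformly $O(\lambda^5)$ from the classical bounds $H_\ell^{(1)}(x) = O((\ell-1)!\,(2/x)^\ell/\pi)$ and $J_\ell(x) = O((x/2)^\ell/\ell!)$ for small $x$.
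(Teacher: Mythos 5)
Your proposal is correct and follows essentially the same route as the paper: your $F_\ell$ is the paper's $Q_\ell$ from \eqref{eq:Qdef}, the case analysis via the vanishing of $J_0(a\rho)$ and $J_1(a\rho)$ is Lemma \ref{l:resinlimit}, and the channel-by-channel small-$\lambda$ expansions, the order-$\lambda$ contributions of the $\ell=0,\pm1,\pm2$ channels in the $s$-resonance case, and the large-$\ell$ tail bound all match Lemmas \ref{l:spfundamentals}--\ref{l:sigmaell'}, with every leading constant agreeing. The one small imprecision is in Case (1), where a zero eigenvalue is still permitted: a channel with $J_{\ell-1}(a\rho)=0$, $\ell\ge 3$, contributes $O(\lambda^{2\ell-3})$ rather than your claimed $O(\lambda^{2|\ell|-1})$, but this is still absorbed in the $O(\lambda)$ error.
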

Theorem \ref{t:mssp} gives an expression for the constant $b$,
and Figure \ref{fig:sigPA} provides an illustration.

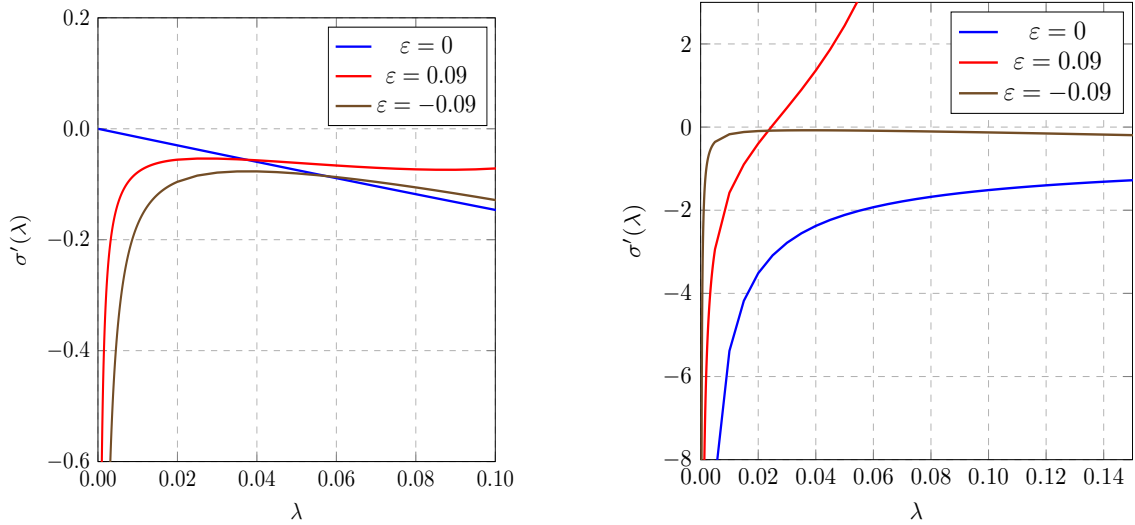
\begin{figure}[ht]
  \centering
  \resizebox{7 cm}{7 cm}{
  \begin{tikzpicture}
\begin{axis}[
y tick label style={
        /pgf/number format/.cd,
            fixed,
            fixed zerofill,
            precision=1,
        /tikz/.cd
    },
    x tick label style={
        /pgf/number format/.cd,
            fixed,
            fixed zerofill,
            precision=2,
        /tikz/.cd
    },
ylabel=\small{$\sigma'(\lambda)$},
xlabel=\small{$\lambda$},
          grid=major, % Display a grid
          grid style={dashed,black!30}, % Set the style
tick label style={font=\small}, 
width=0.5\linewidth, 
height=0.5\linewidth,
xmin = -0, xmax = 0.1,
ymin = -0.6, ymax = 0.2,
]
\addplot+[line width = 1pt, mark = none, mark options={scale=1}] table [x=lambda, y=res, col sep=comma] {l0table_sig.csv};
\addlegendentry{$\ve=0$}
\addplot+[line width = 1pt, mark = none, mark options={scale=0.5}] table [x=lambda, y=above, col sep=comma] {l0table_sig.csv};
\addlegendentry{$\ve=0.09$}
\addplot+[line width = 1pt, mark = none, mark options={scale=0.5}] table [x=lambda, y=below, col sep=comma] {l0table_sig.csv};
\addlegendentry{$\ve=-0.09$}
\end{axis}
\end{tikzpicture}}\hspace{1cm}
  \resizebox{7 cm}{7 cm}{ 
  \begin{tikzpicture}
\begin{axis}[
y tick label style={
        /pgf/number format/.cd,
            fixed,
            fixed zerofill,
            precision=0,
        /tikz/.cd
    },
    x tick label style={
        /pgf/number format/.cd,
            fixed,
            fixed zerofill,
            precision=2,
        /tikz/.cd
    },
ylabel=\small{$\sigma'(\lambda)$},
xlabel=\small{$\lambda$},
          grid=major, % Display a grid
          grid style={dashed,black!30}, % Set the style
tick label style={font=\small}, 
width=0.5\linewidth, 
height=0.5\linewidth,
xmin = -0, xmax = 0.15,
ymin = -8, ymax = 3,
]
\addplot+[line width = 1pt, mark = none, mark options={scale=1}] table [x=lambda, y=res, col sep=comma] {l1table_sig.csv};
\addlegendentry{$\ve=0$}
\addplot+[line width = 1pt, mark = none, mark options={scale=0.5}] table [x=lambda, y=above, col sep=comma] {l1table_sig.csv};
\addlegendentry{$\ve=0.09$}
\addplot+[line width = 1pt, mark = none, mark options={scale=0.5}] table [x=lambda, y=below, col sep=comma] {l1table_sig.csv};
\addlegendentry{$\ve=-0.09$}
\end{axis}
\end{tikzpicture}}
\caption{Graphs of $\sigma'(\lambda)$ for $-\Delta - a^2 \bbo_1$, with $a^2 = j_{1,1}^2-\ve$ on the left (perturbing a zero eigenvalue) and $a^2 = j_{0,1}^2-\ve$ (perturbing a $p$-resonance at zero) on the right. The cases $\varepsilon \ne 0$ all correspond to case (1) of Theorem \ref{t:spintro}. Case (2) of Theorem \ref{t:spintro} is $\varepsilon=0$ on the right and Case (3) of Theorem \ref{t:spintro} is $\varepsilon=0$ on the left. The behavior for larger $\lambda$ is in Figure \ref{fig:sigP0}.}\label{fig:sigPA}
\end{figure}

\subsubsection{Further background and context}
The behavior near $0$ of the scattering phase is largely
determined by the presence or absence of different kinds of
resonance or eigenvalue at $0$, and this, in turn,
is related to the low-energy behavior of the resolvent.
This is true not only for Schr\"{o}dinger operators with compactly supported potentials (without requiring the potential to be radial), but for general 
self-adjoint black-box compactly supported perturbations  of
$-\Delta$ on $\Real^2$.
In the absence of $0$-energy eigenvalues or resonances, 
the proof of \cite[Theorem 3]{cdobs} combined with \cite[Theorem~2]{cdgen}, shows that 
 there is a real number $b$ such that 
\begin{equation}\label{eq:gspe}
\sigma'(\lambda)=\frac{-1}{\lambda}\frac{2}{4(\log \lambda +b)^2+\pi^2}+O(\lambda).
\end{equation}
Here $\sigma$ is the scattering phase for the black-box operator $P$.

For the Dirichlet Laplacian on an external domain in $\Real^2$, the universal behavior $\sigma(\lambda)= \frac{1}{2}(\log \lambda)^{-1} + O((\log \lambda)^{-2})$ was proved first by Hassell and Zelditch in \cite{haze}.  Their results were further refined in   \cite{mcg, stwa,cdobs}. The recent work \cite{gmwz} provides a wealth of numerical examples.

A Schr\"odinger operator on $\Real^2$ generically has neither eigenvalue nor resonance at $0$, in which case again
$\sigma(\lambda)= \frac 12 (\log \lambda)^{-1}+ O((\log \lambda)^{-2})$.
This, or at least very closely related results, has been observed
in \cite{bg84,bgd}; see \cite{av, kmrw} for further related results and more references.  Moreover \cite{bg84, kmrw} 
relate the next term in the expansion to the scattering length.

The constant $b$ appearing in \eqref{eq:gspe} can be 
identified using the asymptotic behavior of a function in the 
null space of $P$. Under the
assumption that $P$ has no $s$-resonance, 
there is a unique function  $G$ which is
locally in the domain of $P$ and which satisfies
$PG=0$ and $G(x)-\log |x| =O(1) $ as $|x|\rightarrow \infty$. Then 
$$b=-\log 2 +\gamma +C(P),\; \text{where}\; C(P)=\lim_{x\rightarrow \infty} (\log |x|-G(x)).$$
For the case of scattering by a nontrivial smooth obstacle with Dirichlet boundary conditions,  $C(P)$ is the logarithm of the logarithmic capacity of the obstacle-- see \cite{cdobs} for further discussion. For the case of a radial potential $V$
with $P_V=-\Delta+V$, $C(P_V)$ is the negative reciprocal of the scattering length as defined in \cite{bg84}.  See \cite[Sections II and VII]{bg84} for a discussion of other definitions (both equivalent and not) of the scattering length in two dimensions.

The paper \cite{bgd} obtains, for much more general potentials $V$, expressions for the leading term of the scattering matrix which may be used to derive the leading term of the
scattering phase (with $\sigma(0)=0$), but
with worse errors than those coming
from Theorem \ref{t:spintro}.  The results of \cite{bgd}
allow the possibility of an eigenvalue or resonance at $0$.

%For generic $V\in L^\infty(\Real^2;\Real)$, $-\Delta +V$ has neither resonance nor eigenvalue at $0$.  Likewise, if $\mathcal{O}\subset \Real^2$ is a nontrivial open bounded set, the 
%Dirichlet Laplacian on $\Real^2 \setminus \overline{\mathcal{O}}$
%has neither resonance nor eigenvalue at $0$. Thus, by \cite[Theorem 2]{cdgen} the generic Schr\"{o}dinger operator $-\Delta +V$ and 
%the Dirichlet Laplacian have similar resolvent expansions at $0$.
%Thus the 
%proof of \cite[Theorem 3]{cdobs} combined with \cite[Theorem 2]{cdgen} shows that for such generic Schr\"{o}dinger operators there is a real number $b$ such that 
%$$\sigma'(\lambda)=\frac{-1}{\lambda}\frac{1}{(\log \lambda +b)^2+\frac{\pi^2}{4}}+O(\lambda).$$

%Thus, in the absence of $0$ energy bound state or resonance,
%$\sigma'(\lambda)$ exhibits the same sort of low-energy behavior as does $\sigma_D'(\lambda)$, where $\sigma_D$ is the scattering phase for the Dirichlet problem of obstacle scattering.  The universality of the low-energy behavior of 
%the Dirichlet scattering phase was first observed by 
%Hassell and Zelditch \cite{haze}. Their results were further refined in \cite{mcg, stwa,cdobs}. Recent work of \cite{gmwz} provides a wealth of numerical examples.

\subsection{Breit--Wigner peaks}\label{s:bwintro}
In this section we use Breit--Wigner formulas to illustrate the significance of a persistent eigenvalue for scattering.

We begin with a one-dimensional example to set the stage. Let $\sigma(\lambda)$ be the scattering phase of $- \frac{d^2}{dx^2} + V$, with $V$ compactly supported. Let $L$ be the length of the convex hull of the support of $V$, and let $\text{Res}^*$ be the set of nonzero resonances of $-\frac {d^2}{dx^2} + V$. The Breit--Wigner formula  says that
\begin{equation}\label{e:bw1d}
\sigma'(\lambda) = \frac {-L} \pi + \sum_{\lambda_k \in \text{Res}^*} \frac {-\im \lambda_k}{\pi |\lambda - \lambda_k|^2}.
\end{equation}
%where $L$ is the length of the convex hull of the support of $V$, and the sum is over the nonzero resonances of $-\frac {d^2}{dx^2} + V$. 
This formula, and others like it, are deduced from %precise 
factorizations of $\det S(\lambda)$: see \cite[Theorem~2.20]{dyatlovzworski} for the full-line problem, and  \cite[Theorem 1.1]{korotyaev} for the half-line problem. 

% \begin{figure}[ht] 
% \vspace{3mm}
% \includegraphics[width=7cm]{}
% \hspace{1cm}
% \includegraphics[width=7cm]{}
% \vspace{3mm}
% \caption{On the left, the first three resonances of $-\frac {d^2}{dx^2} + 10\delta_1$ on the half line. On the right, a graph of $\sigma'(\lambda)$. The vertical lines are at $\re \lambda_1$, $\re \lambda_2$, $\re \lambda_3$, and the dashed curve is the approximation $\frac{-1}\pi - \frac{\im \lambda_1}{\pi|\lambda - \lambda_1|^2}$. %Here are the desmos links: \url{https://www.desmos.com/calculator/zbztrsvzr3} \url{https://www.desmos.com/calculator/km9agpdplo}
% }\label{f:delta}
% \end{figure}

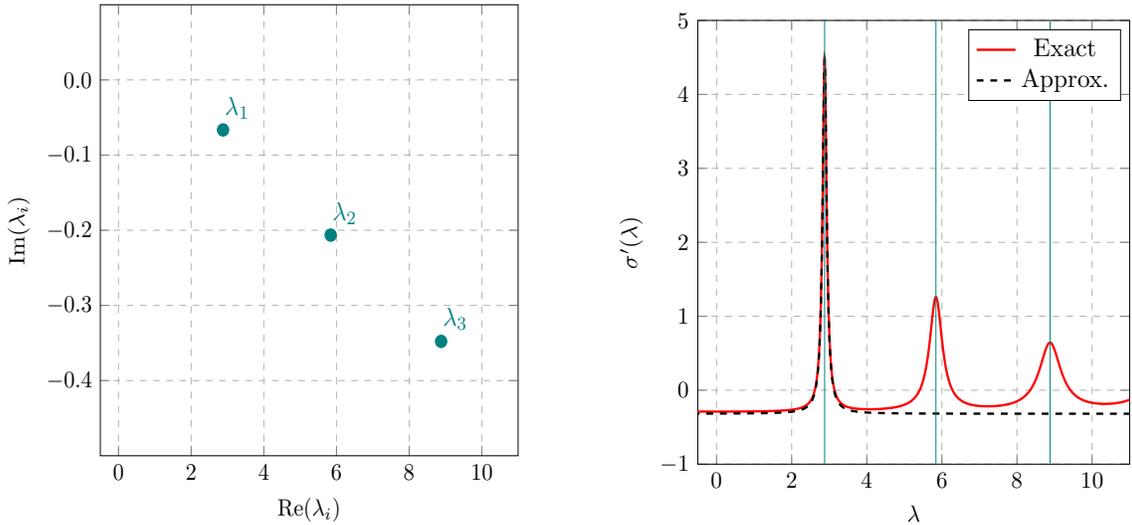
\begin{figure}[ht]
  \centering
  \resizebox{7 cm}{7 cm}{
  \begin{tikzpicture}
\begin{axis}[
y tick label style={
        /pgf/number format/.cd,
            fixed,
            fixed zerofill,
            precision=1,
        /tikz/.cd
    },
    x tick label style={
        /pgf/number format/.cd,
            fixed,
            fixed zerofill,
            precision=0,
        /tikz/.cd
    },
ylabel=\small{$\im(\lambda_i)$},
xlabel=\small{$\re(\lambda_i)$},
          grid=major, % Display a grid
          grid style={dashed,black!30}, % Set the style
tick label style={font=\small}, 
width=0.5\linewidth, 
height=0.5\linewidth,
xmin = -0.5, xmax = 11,
ymin = -0.5, ymax = 0.1,
xtick={0,2,4,6,8,10},
ytick={-0.4,-0.3,-0.2,-0.1,0},
]
\node[color = teal, label={[color = teal, yshift=2ex,xshift=4ex]180:{$\lambda_1$}},circle,fill,inner sep=2pt] at (axis cs:2.877577458,-0.0665106) {};
\node[color = teal, label={[color = teal, yshift=2ex,xshift=4ex]180:{$\lambda_2$}},circle,fill,inner sep=2pt] at (axis cs:5.841379586,-0.20648) {};
\node[color = teal, label={[color = teal, yshift=2ex,xshift=4ex]180:{$\lambda_3$}},circle,fill,inner sep=2pt] at (axis cs:8.880653554,-0.34784182) {};
\end{axis}
\end{tikzpicture}}\hspace{1cm}
  \resizebox{7 cm}{7 cm}{ 
  \begin{tikzpicture}
\begin{axis}[
y tick label style={
        /pgf/number format/.cd,
            fixed,
            fixed zerofill,
            precision=0,
        /tikz/.cd
    },
    x tick label style={
        /pgf/number format/.cd,
            fixed,
            fixed zerofill,
            precision=0,
        /tikz/.cd
    },
ylabel=\small{$\sigma'(\lambda)$},
xlabel=\small{$\lambda$},
          grid=major, % Display a grid
          grid style={dashed,black!30}, % Set the style
tick label style={font=\small}, 
width=0.5\linewidth, 
height=0.5\linewidth,
xmin = -0.5, xmax = 11,
ymin = -1, ymax = 5,
xtick={0,2,4,6,8,10},
ytick={-1,0,1,2,3,4,5},
]
\addplot+[color = red, line width = 1pt, mark = none, mark options={scale=1}] table [x=lambda, y=sigma, col sep=comma] {tab4.csv};
\addlegendentry{Exact}
\addplot+[dashed, color = black, line width = 1pt, mark = none, mark options={scale=0.5}] table [x=lambda, y=approx, col sep=comma] {tab4.csv};
\addlegendentry{Approx.}
\draw[color=teal] (2.877577458,-0.99) -- (2.877577458,4.99);
\draw[color=teal] (5.841379586,-0.99) -- (5.841379586,4.99);
\draw[color=teal] (8.880653554,-0.99) -- (8.880653554,4.99);
\end{axis}
\end{tikzpicture}}
\caption{On the left, the first three resonances of $-\frac {d^2}{dx^2} + 10\delta_1$ on the half line. On the right, a graph of $\sigma'(\lambda)$. The vertical lines are at $\re \lambda_1$, $\re \lambda_2$, $\re \lambda_3$, and the dashed curve is the approximation $\frac{-1}\pi - \frac{\im \lambda_1}{\pi|\lambda - \lambda_1|^2}$. 
%Here are the desmos links: \url{https://www.desmos.com/calculator/zbztrsvzr3} \url{https://www.desmos.com/calculator/km9agpdplo}
} \label{f:delta}
\end{figure}

A simple and striking example is $P=-\frac {d^2}{dx^2} + a\delta_1$ on $\mathbb R_+$, with Dirichlet boundary condition at~$0$. The resonances (obtained by solving $(P-\lambda^2)u=0$ subject to $u(x) =e^{i\lambda x}$ for $x>1$) are given in terms of the Lambert \textit{W} function (see \eqref{e:wn}) by $\lambda_k = \frac 1 {2i} (a-W_{-k}(ae^a))$, where $k$ varies over the nonzero integers. The derivative of the scattering phase (obtained by solving $(P-\lambda^2)u=0$ subject to $u(x) = e^{-i\lambda x} + e^{2\pi i \sigma(\lambda)}e^{i\lambda x}$ for $x>1$) is given by $\sigma'(\lambda) = -\frac 1 \pi + \frac 1 \pi \frac{a + \lambda^2 \csc^2\lambda}{(a+\lambda \cot \lambda)^2+\lambda^2}$. Figure \ref{f:delta} shows the first three Breit--Wigner peaks, corresponding to the first three resonances. Moreover, at low energies, up to and including the first peak, we see that   $\sigma'(\lambda) \approx \frac{-L}\pi - \frac{\im \lambda_1}{\pi|\lambda - \lambda_1|^2}$.

In higher dimensions, Breit--Wigner formulas are more complicated. In $\mathbb R^d$, $d \ge 2$, work of Petkov and Zworski \cite{petkovzworski} shows that $\sigma'(\lambda)=\sum_{|\lambda_k-\lambda|<1} \frac {-\im \lambda_k}{\pi |\lambda - \lambda_k|^2} + O(\lambda^{d-1})$ as $\lambda \to +\infty$.  If $d$ is odd, an analog of \eqref{e:bw1d}, with $-L/\pi$ replaced by a polynomial of degree $\le d$ and with the sum $\sum \frac {-\im \lambda_k}{\pi |\lambda - \lambda_k|^2}$ modified so as to converge, follows from the factorization of $\det S(\lambda)$ in \cite[Theorem 3.54]{dyatlovzworski}.

In even dimensions, the non-resonance contribution (i.e the replacement of the term $-L/\pi$ in \eqref{e:bw1d}) is not known; by  Theorem \ref{t:spintro}  it is sometimes singular as $\lambda \to 0$. Moreover, it is not clear how to modify the sum $\sum \frac {-\im \lambda_k}{\pi |\lambda - \lambda_k|^2}$ to account for resonances on the Riemann surface $\Lambda$ (see Section \ref{s:riemsurf}). Nevertheless, in Figure~\ref{fig:sigP0}, which is typical for the circular well family, we see that a resonance coming from a persistent eigenvalue produces a large Breit--Wigner peak, easily seen despite the $\lambda \to 0$ singularity. See also \cite[Figure 2]{gmwz} for  examples of Breit--Wigner peaks for  obstacle scattering in $\mathbb R^2$, with significant   peaks corresponding to resonances generated by significant trapping of the  billiard flow. 

\begin{figure}[ht]
  \centering
  \resizebox{7 cm}{7 cm}{
  \begin{tikzpicture}
\begin{axis}[
y tick label style={
        /pgf/number format/.cd,
            fixed,
            fixed zerofill,
            precision=1,
        /tikz/.cd
    },
    x tick label style={
        /pgf/number format/.cd,
            fixed,
            fixed zerofill,
            precision=2,
        /tikz/.cd
    },
ylabel=\small{$\sigma'(\lambda)$},
xlabel=\small{$\lambda$},
          grid=major, % Display a grid
          grid style={dashed,black!30}, % Set the style
tick label style={font=\small}, 
width=0.5\linewidth, 
height=0.5\linewidth,
xmin = -0, xmax = 1,
ymin = -1, ymax = 1,
]
\addplot+[line width = 1pt, mark = none, mark options={scale=1}] table [x=lambda, y=res, col sep=comma] {l0table_sig.csv};
\addlegendentry{$\ve=0$}
\addplot+[line width = 1pt, mark = none, mark options={scale=0.5}] table [x=lambda, y=above, col sep=comma] {l0table_sig.csv};
\addlegendentry{$\ve=0.09$}
\addplot+[line width = 1pt, mark = none, mark options={scale=0.5}] table [x=lambda, y=below, col sep=comma] {l0table_sig.csv};
\addlegendentry{$\ve=-0.09$}
 \draw[color=teal]  (0.2100356,-1) -- (0.2100356,1);
  \draw[domain=0.23:1, smooth, dashed] plot[id=exp] function{0.0017315/((x-0.2100356)**2 +  0.0017315**2)  - 0.8 * sqrt(x)};
    \draw[domain=0:0.18, smooth, dashed] plot[id=exp] function{0.0017315/((x-0.2100356)**2 +  0.0017315**2)  - 0.8 * sqrt(x)};
\end{axis}
\end{tikzpicture}}\hspace{1cm}
  \resizebox{7 cm}{7 cm}{ 
  \begin{tikzpicture}
\begin{axis}[
y tick label style={
        /pgf/number format/.cd,
            fixed,
            fixed zerofill,
            precision=0,
        /tikz/.cd
    },
    x tick label style={
        /pgf/number format/.cd,
            fixed,
            fixed zerofill,
            precision=2,
        /tikz/.cd
    },
ylabel=\small{$\sigma'(\lambda)$},
xlabel=\small{$\lambda$},
          grid=major, % Display a grid
          grid style={dashed,black!30}, % Set the style
tick label style={font=\small}, 
width=0.5\linewidth, 
height=0.5\linewidth,
xmin = -0, xmax = 1,
ymin = -4, ymax = 4,
]
\addplot+[line width = 1pt, mark = none, mark options={scale=1}] table [x=lambda, y=res, col sep=comma] {l1table_sig.csv};
\addlegendentry{$\ve=0$}
\addplot+[line width = 1pt, mark = none, mark options={scale=0.5}] table [x=lambda, y=above, col sep=comma] {l1table_sig.csv};
\addlegendentry{$\ve=0.09$}
\addplot+[line width = 1pt, mark = none, mark options={scale=0.5}] table [x=lambda, y=below, col sep=comma] {l1table_sig.csv};
\addlegendentry{$\ve=-0.09$}
\draw[color=teal] (0.1119944,-4) -- (0.1119944,4);
  \draw[domain=0.12:1, smooth, dashed] plot[id=exp] function{0.0344571/((x-0.1119944)**2 +  0.0344571**2) + log(x)};
    \draw[domain=0:0.1, smooth, dashed] plot[id=exp] function{0.0344571/((x-0.1119944)**2 +  0.0344571**2) +log(x)};
\end{axis}
\end{tikzpicture}}
\caption{Graphs of $\sigma'(\lambda)$ for $-\Delta - a^2 \bbo_1$, with $a^2 = j_{1,1}^2-\ve$ on the left (perturbing a zero eigenvalue) and $a^2 = j_{0,1}^2-\ve$ (perturbing a $p$-resonance at zero) on the right.  The vertical lines are at $\re \lambda_\ve$, with $\lambda_\ve$ as  in Figure \ref{f:3foldres}. Each $\ve=0.09$ curve has a maximum when $\lambda \approx \re \lambda_\varepsilon$, with $\sigma'(\re \lambda_\varepsilon)\approx 367.37$ on the left, and  $\sigma'(\re \lambda_\varepsilon)\approx 17.048$ on the right. The dashed curves are Breit--Wigner approximations $ \frac{-\im \lambda_\ve}{\pi|\lambda - \lambda_\ve|^2} - 0.8\sqrt \lambda $  and  $\frac{-\im \lambda_\ve}{\pi|\lambda - \lambda_\ve|^2} + \log \lambda$, with the non-resonance terms $\sqrt \lambda$ and $\log \lambda$ (meant to be analogous to $-L/\pi$ in \eqref{e:bw1d}) chosen ad hoc by eye to improve the fit.}\label{fig:sigP0}
\end{figure}
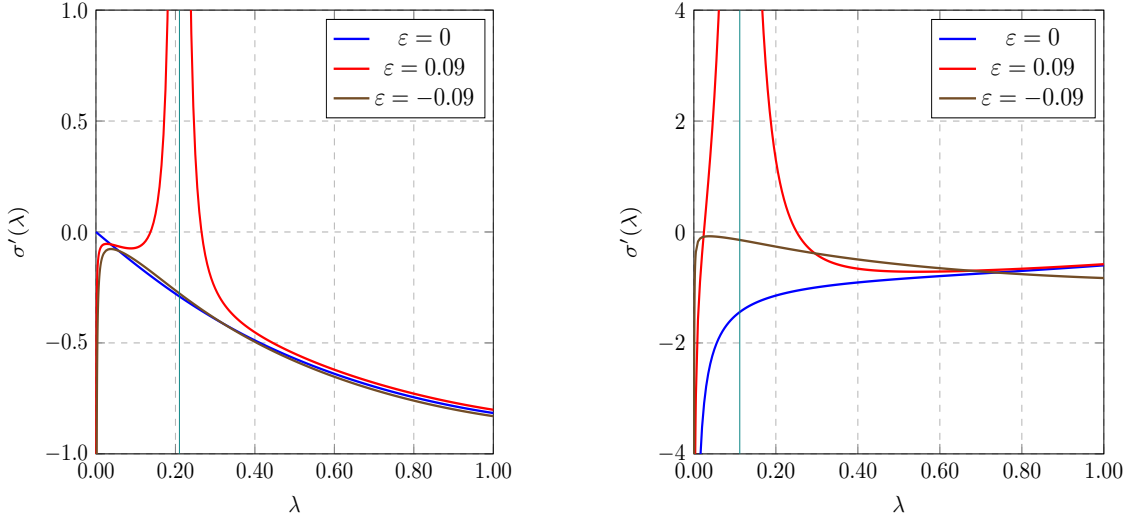

\section{Preliminaries}

\subsection{Bessel and Hankel functions}\label{s:bessel} We use the standard notation of \cite[10.2(ii) and 10.4.3.]{dlmf}:
\begin{equation}\label{e:bessdef}\begin{split}
J_\nu(z)=(z/2)^\nu\sum_{k=0}^\infty\frac{(-z^2/4)^k}{k!\Gamma(\nu+k+1)}, &\quad Y_\ell(z) = \frac 1 \pi \partial_\nu J_\nu(z)|_{\nu = \ell} + \frac {(-1)^n}\pi \partial_\nu J_{-\nu}(z)|_{\nu=-\ell}, \\
H_\ell^{(1)}(z) = J_\ell(z) + i Y_\ell(z), &\quad  
H_\ell^{(2)}(z) = J_\ell(z)  - i Y_\ell(z).
\end{split}\end{equation}
If $\mathcal{C}_\ell$ is one of $J_\ell$, $Y_{\ell}$, $H_{\ell}^{(1)},$  or $H_{\ell}^{(2)}$, then $\mathcal{C}_\ell$ solves the differential equation \cite[10.2(i)]{dlmf}
\begin{equation}\label{e:bessdiff}
z^2\mathcal{C}_{\ell}''(z) + z \mathcal{C}_{\ell}'(z) + (z^2 - \ell^2)\mathcal{C}_{\ell}(z)=0,
\end{equation}
and we also have the recurrence relations   \cite[10.6(i)]{dlmf}
\begin{equation}\label{eq:recur} \begin{split}
    \mcC_{\ell -1}(z) +\mcC_{\ell+1}(z)=\frac{2\ell}{z}\mcC_{\ell}(z),  &\qquad \mcC_{\ell-1}(z)-\mcC_{\ell+1}(z)=2\mcC_\ell'(z), \\
\mcC_\ell'(z)= \mcC_{\ell -1}(z)-\frac{\ell}{z}\mcC_{\ell}(z),  &\qquad \mcC_{\ell}'(z)=-\mcC_{\ell+1}(z)+\frac{\ell}{z}\mcC_{\ell}(z).% \\    J_0'(z)=-J_1(z), &\qquad {H_0^{(1)}}'(z)= -H_1^{(1)}(z).
\end{split} \end{equation}
As $|z| \to \infty$, $z \in \mathbb C \setminus -i(0,\infty)$, we have  \cite[10.2.5 and 10.2.6]{dlmf}
\begin{equation}\label{e:hankellarge}
H^{(1)}_\ell(z) \sim \sqrt{\tfrac 2 {\pi z}} e^{i(z - \frac{\ell \pi}2 - \frac \pi 4)}, \qquad H^{(2)}_\ell(z) \sim \sqrt{\tfrac 2 {\pi z}} e^{-i(z - \frac{\ell \pi}2 - \frac \pi 4)}.
\end{equation}
As $|z| \to 0$, we have \cite[10.8]{dlmf}
\begin{equation}\label{eq:Yexp}
    \begin{aligned}
        Y_0(z) &= \frac{2}{\pi}[\log(z/2) + \gamma] + O(z^2\log z), \quad \gamma = -\Gamma'(1) \approx 0.5772,\\
        Y_1(z) &= -\frac {2}{\pi z} + \frac {z \log z}{\pi} + O(z)\\
        Y_\ell(z) &= -\frac{(\ell-1)! 2^\ell}{\pi z^\ell} - \frac{(\ell-2)!2^{\ell-2}}{\pi z^{\ell-2}} + O(z^{4-\ell}(1+\delta_{2,\ell} \log z)) , \qquad \ell \ge 2.
    \end{aligned}
\end{equation} 
%For the convenience of the reader, we record some standard Bessel function identities (e.g. \cite[Section 10.6]{dlmf}) valid for $\nu >0$:
%\begin{align}
%\mcC_{\nu -1}(z) +\mcC_{\nu+1}(z)=\frac{2\nu}{z}\mcC_{\nu}(z),\;\; & \mcC_{\nu-1}(z)-\mcC_{\nu+1}(z)=2\mcC_\nu'(z) \label{eq:bf2}\\
%\mcC_\nu'(z)= \mcC_{\nu -1}(z)-\frac{\nu}{z}\mcC_{\nu}(z),\;\; & \mcC_{\nu}'(z)=-\mcC_{\nu+1}(z)+\frac{\nu}{z}\mcC_{\nu}(z) \label{eq:bf3}.
%\end{align}
% Here $\mathcal{C}_\nu$ is $J_\nu$,
%$Y_\nu$, $H^{(1)}_\nu$, or $H^{(2)}_\nu$.

\subsection{Lambert \textit{W} function}\label{s:lambert} If $x \in \mathbb C\setminus\{0\}$, then $we^w=x$ has infinitely many solutions. Following \cite{wfcn}, we denote these by $W_n(x)$, with $n$ varying over the integers. If $n \ne 0$, then
\begin{equation}\label{e:wn}
W_n(x) = L_n(x) - \log( L_n(x) ) + \sum_{k=0}^\infty \sum_{m=1}^\infty c_{km} \frac {(\log(L_n(x)))^m}{L_n(x)^{k+m}},
\end{equation}
where $L_n(x) = \log x + 2\pi i n$,  $\im \log z \in (-\pi,\pi]$ for $z \in \mathbb C \setminus \{0\}$,
and the $c_{km}$ are combinatorial coefficients (see \cite[(4.20)]{wfcn}). The remaining solution $W_0(x)$ is the analytic function with power series $\sum_{k=1}^\infty (-k)^{k-1}x^k/k!$   (see \cite[(3.1)]{wfcn}). 

Below we will mostly use just the following simple consequence of \eqref{e:wn}.

\begin{lem}\label{l:lwf}
For $n\in \Integers \setminus\{0\}$, $\lim_{\ve \to 0} \re W_n(\ve) =-\infty$, 
$\lim_{\ve \uparrow 0}\im W_n(\ve )= \pi( 1+ 2n -\sgn n)$ and $\lim_{\ve \downarrow 0}\im W_n(\ve)=\pi(2n-\sgn n)$.
\end{lem}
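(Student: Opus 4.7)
The plan is to read off everything from the expansion \eqref{e:wn}. Write $L(\varepsilon):=L_n(\varepsilon)=\log\varepsilon+2\pi i n$. The key observation is that as $\varepsilon\to 0$ from either side, $\re L(\varepsilon)=\ln|\varepsilon|\to -\infty$, and so $|L(\varepsilon)|\to\infty$ and $|\log L(\varepsilon)|\sim\log|\ln|\varepsilon||$. Thus every term in the double sum $\sum_{k\ge 0}\sum_{m\ge 1}c_{km}(\log L)^m/L^{k+m}$ of \eqref{e:wn} is bounded by a constant times $(\log|\ln|\varepsilon||)^m/|\ln|\varepsilon||^{k+m}$, and a straightforward geometric estimate gives
\[
W_n(\varepsilon)=L(\varepsilon)-\log L(\varepsilon)+O\!\left(\frac{\log|\ln|\varepsilon||}{|\ln|\varepsilon||}\right)\qquad\text{as }\varepsilon\to 0.
\]

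For the real part, this immediately gives $\re W_n(\varepsilon)=\ln|\varepsilon|-\log|L(\varepsilon)|+o(1)$, and since $\log|L(\varepsilon)|\sim\log|\ln|\varepsilon||$ is negligible compared with $\ln|\varepsilon|$, we conclude $\re W_n(\varepsilon)\to -\infty$.

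For the imaginary parts I would distinguish the two one-sided limits and use the branch convention $\im\log z\in(-\pi,\pi]$. If $\varepsilon\downarrow 0$, then $\log\varepsilon=\ln\varepsilon\in\mathbb R$, so $L(\varepsilon)=\ln\varepsilon+2\pi i n$ has real part tending to $-\infty$ and imaginary part equal to $2\pi n$; since $n\ne 0$, its argument tends to $\pi$ if $n>0$ and to $-\pi$ if $n<0$, that is, to $\pi\sgn n$. Therefore $\im\log L(\varepsilon)\to\pi\sgn n$ and $\im W_n(\varepsilon)\to 2\pi n-\pi\sgn n=\pi(2n-\sgn n)$. If instead $\varepsilon\uparrow 0$, then by the branch convention $\log\varepsilon=\ln|\varepsilon|+i\pi$, so $L(\varepsilon)=\ln|\varepsilon|+i\pi(1+2n)$; for $n\in\Integers\setminus\{0\}$ the sign of $1+2n$ coincides with $\sgn n$ (indeed $1+2n\ge 3$ for $n\ge 1$ and $1+2n\le -1$ for $n\le -1$), so again $\arg L(\varepsilon)\to\pi\sgn n$, and $\im W_n(\varepsilon)\to\pi(1+2n)-\pi\sgn n=\pi(1+2n-\sgn n)$.

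There is no real obstacle here beyond careful branch bookkeeping; the only point demanding a moment of care is the observation that the half-integer sign $\sgn(1+2n)$ equals $\sgn n$ for nonzero integer $n$, which is what makes the two one-sided formulas differ by exactly $\pi$ rather than, say, $3\pi$ for some small $|n|$.
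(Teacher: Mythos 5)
Your proof is correct and follows essentially the same route as the paper's (which is a two-line version of the same computation: read off $\re W_n(\ve)\sim\log|\ve|$ from \eqref{e:wn}, then track $\im L_n(\ve)=2\pi n+\im\log\ve$ and $\im\log(L_n(\ve))\to\pi\sgn n$). Your additional care with the negligibility of the double sum and the observation that $\sgn(1+2n)=\sgn n$ for nonzero integers $n$ fills in details the paper leaves implicit.
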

\begin{proof} By \eqref{e:wn}, $\re W_n(\ve) \sim \log|\ve| \to -\infty$ as $\ve \to 0$. Meanwhile $\im L_n(\ve)  \to 2 \pi n + \im \log \ve$ and $\im \log(L_n(\ve)) \to \pi \sgn n$, as desired.
%Using 
%$\sgn \im  (\log \ve + 2\pi i n)= \sgn n$ shows that $\im \log (\log \ve +2\pi i n)\rightarrow \pi (\sgn n)$ as $\ve \rightarrow 0$.  Observing  $\im \log \ve =0$ if $\ve >0$ and $\im \log \ve =\pi$ if $\ve <0$
%completes the proof.
\end{proof} 

\subsection{Resonances}\label{s:resprelim} Let $V \colon \mathbb R^2\to\mathbb R$ be bounded and supported in $\{x \colon |x| \le \rho\}$, and let $P=-\Delta+V$. We discuss separately resonances in the cut plane $\mathbb C \setminus -i[0,\infty)$, at zero, and on the Riemann surface of the logarithm.

\subsubsection{Resonances on the cut plane}\label{s:rescut} Recall that in Section \ref{s:pereig} we defined a \textit{resonance} of $P$ to be a value of $\lambda$
with $-\pi/2 \leq \arg \lambda<3\pi/2$
%\in \mathbb C \setminus -i[0,\infty)$ 
for which there is an \textit{outgoing} solution $u \not \equiv 0$ to $(P-\lambda^2)u=0$, i.e. one obeying $u(r,\theta)=\sum_{\ell=-\infty}^\infty  c_\ell H_\ell^{(1)}(\lambda r)  e^{i\ell \theta}$ for $r > \rho$, where $V(x)=0$ for $|x|>\rho$.

\begin{lem}\label{l:uhprescut}
    If $\lambda  \in \mathbb C \setminus -i[0,\infty)$ is a resonance, then $\im \lambda < 0$ or $\re \lambda = 0$.
\end{lem}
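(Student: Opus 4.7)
The plan is to argue by contradiction: suppose $\lambda \in \mathbb{C} \setminus -i[0,\infty)$ is a resonance with $\im \lambda \geq 0$ and $\re \lambda \neq 0$, and split into two subcases according to the sign of $\im \lambda$.

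\textit{Case 1: $\im \lambda > 0$.} The strategy is to identify $u$ with the outgoing free resolvent applied to $-Vu$ and then read off exponential decay. By \eqref{e:hankellarge}, $G_\lambda(x) := \tfrac{i}{4} H^{(1)}_0(\lambda|x|)$ satisfies $|G_\lambda(x)| \lesssim |x|^{-1/2} e^{-|x|\im\lambda}$ as $|x| \to \infty$, so convolution with $G_\lambda$ sends compactly supported functions into $L^2(\mathbb{R}^2)$. From $(-\Delta - \lambda^2)u = -Vu$, the difference $w := u + G_\lambda * (Vu)$ is smooth (elliptic regularity), outgoing, and in the kernel of $-\Delta - \lambda^2$ on all of $\mathbb{R}^2$. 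Smoothness at the origin forces $w = \sum a_\ell J_\ell(\lambda r) e^{i\ell\theta}$ near $0$, while outgoingness forces $w = \sum c_\ell H^{(1)}_\ell(\lambda r) e^{i\ell\theta}$ for large $r$; since $J_\ell$ and $Y_\ell$ are linearly independent, matching these forces $w \equiv 0$. Hence $u = -G_\lambda * (Vu) \in L^2(\mathbb{R}^2)$, so $u$ is a nonzero $L^2$ eigenfunction of the self-adjoint operator $P = -\Delta + V$ with eigenvalue $\lambda^2$. This forces $\lambda^2 \in \mathbb{R}$, contradicting $\im(\lambda^2) = 2(\re\lambda)(\im\lambda) \neq 0$.

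\textit{Case 2: $\im \lambda = 0$, so $\lambda \in \mathbb{R}\setminus\{0\}$.} I would apply Green's identity to $u$ and $\bar u$ on $\{|x| \leq R\}$ for $R > \rho$. Since $V$ and $\lambda^2$ are real and $(P - \lambda^2)u = 0$, the volume term vanishes, leaving
\[
\int_{|x|=R}\bigl(\bar u \, \partial_r u - u \, \partial_r \bar u\bigr)\, d\sigma = 0.
\]
Plugging in the outgoing expansion $u = \sum_\ell c_\ell H^{(1)}_\ell(\lambda r) e^{i\ell\theta}$, using orthogonality of $\{e^{i\ell\theta}\}$, and invoking the Wronskian identity $\overline{H^{(1)}_\ell(z)}(H^{(1)}_\ell)'(z) - H^{(1)}_\ell(z)\overline{(H^{(1)}_\ell)'(z)} = 4i/(\pi z)$ for $z > 0$, the left-hand side evaluates to $8i\sum_\ell |c_\ell|^2$. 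Hence $c_\ell = 0$ for every $\ell$, so $u$ vanishes on $\{|x|>\rho\}$, and unique continuation for the elliptic operator $-\Delta + V - \lambda^2$ with bounded $V$ then forces $u \equiv 0$ globally, contradicting $u \not\equiv 0$.

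The main technical obstacle is the identification $u = -G_\lambda * (Vu)$ in Case~1, which hinges on the uniqueness of smooth outgoing solutions to the free equation $(-\Delta - \lambda^2)w = 0$ on $\mathbb{R}^2$ for $\im \lambda > 0$; this in turn reduces to the linear independence of $J_\ell$ and $Y_\ell$, but must be carried out uniformly across angular modes. Case~2 is essentially a computation with the exact Hankel Wronskian combined with standard unique continuation.
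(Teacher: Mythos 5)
Your proof is correct and follows essentially the same route as the paper: for $\im\lambda>0$ you show the resonant state is an $L^2$ eigenfunction of the self-adjoint operator $P$, forcing $\lambda^2\in\Real$ and hence $\re\lambda=0$, while for real nonzero $\lambda$ you rule out resonances by a Rellich-type uniqueness argument. The only difference is that you supply proofs of the two ingredients the paper asserts or cites: the paper reads $u\in H^2$ directly from the Hankel asymptotics \eqref{e:hankellarge} applied to the outgoing expansion of $u$ itself (rather than via the representation $u=-G_\lambda*(Vu)$), and it dispatches the real-axis case by citing Rellich's uniqueness theorem \cite[Theorem~3.33]{dyatlovzworski}, whose standard boundary-Wronskian-plus-unique-continuation proof is exactly the computation you carry out.
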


\begin{proof}
If $\im \lambda>0$, then $u \in H^2(\mathbb R^2)$ by the  Hankel function asymptotics \eqref{e:hankellarge} and the equation $(P-\lambda^2)u=0$. Hence  $\int_{\mathbb R^2} |\nabla u|^2 + (V-\lambda^2)|u|^2 = 0$, implying that $\lambda^2$ is real and $\re \lambda =0$. By Rellich's uniqueness theorem \cite[Theorem~3.33]{dyatlovzworski}, there are no resonances in $\mathbb R \setminus \{0\}$.
\end{proof}

The proof of \cite[Theorem~3.33]{dyatlovzworski} uses the following characterization of resonances, which we will also use in Section \ref{s:lcr}. Let $R_V(\lambda) = (P-\lambda^2)^{-1}\colon L^2(\mathbb R) \to L^2(\mathbb R)$ be the resolvent, defined when $\im \lambda >0$ and $\lambda^2$ is not an eigenvalue. It is given by $R_V(\lambda) = R_0(\lambda) (I+V R_0(\lambda))^{-1}$, where $R_0(\lambda) = (-\Delta - \lambda^2)^{-1}$ is the free resolvent and has integral kernel  $\frac i 4 H_0^{(1)}(\lambda|x-y|)$ (see \cite[Section 2.1]{cdobs}). Using analytic Fredholm theory as in  \cite[Section 3.2.1]{dyatlovzworski} shows that if $\chi \in C_c^\infty(\mathbb R^2)$ and $\chi V = \chi$, then the cutoff resolvent $\chi R_V(\lambda)\chi$ continues meromorphically to $\mathbb C \setminus -i[0,\infty)$, and
\begin{equation}\label{e:rvr0}
\chi R_V(\lambda) \chi = \chi R_0(\lambda) \chi (I+V  R_0(\lambda)\chi )^{-1}.
\end{equation}
Moreover, the \textit{Birman--Schwinger principle} holds in the following form:  $\lambda$ is a pole of $\chi R_V(\lambda) \chi$ if and only if $-1$ is an eigenvalue of  $VR_0(\lambda)\chi$, i.e. if and only if $g = - V R_0(\lambda) g$ for some $g \not \equiv 0$. 

\begin{lem}\label{l:resequivcut} Let
$\lambda \in \mathbb C \setminus -i[0,\infty)$ and $\chi \in C_c^\infty(\mathbb R^2)$ with $\chi V = V$. Then $\lambda$ is a resonance if and only if $\chi R_V(\lambda) \chi$ has a pole at $\lambda$.
\end{lem}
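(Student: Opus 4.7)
The strategy is to translate between an outgoing solution $u$ of $(P-\lambda^2)u=0$ and a Birman--Schwinger eigenfunction $g$ satisfying $g=-VR_0(\lambda)g$; the equivalence of the latter condition with $\chi R_V(\lambda)\chi$ having a pole at $\lambda$ has just been recorded. Two standard facts underpin the translation. First, for any $g$ compactly supported in $\{|y|\le \rho\}$, Graf's addition formula
\[
H_0^{(1)}(\lambda|x-y|)=\sum_{\ell=-\infty}^{\infty}H_\ell^{(1)}(\lambda|x|)\,J_\ell(\lambda|y|)\,e^{i\ell(\theta_x-\theta_y)},\qquad |x|>|y|,
\]
applied to the free resolvent kernel $\tfrac{i}{4}H_0^{(1)}(\lambda|x-y|)$ shows that $R_0(\lambda)g$ is outgoing in the sense of \eqref{e:outdef}, with explicit coefficients $c_\ell=\tfrac{i}{4}\int J_\ell(\lambda|y|)e^{-i\ell\theta_y}g(y)\,dy$. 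Second, any outgoing solution $w$ of $(-\Delta-\lambda^2)w=0$ on all of $\Real^2$ must vanish identically: separation of variables and regularity at $r=0$ force each Fourier mode to have the form $a_\ell J_\ell(\lambda r)$, and since $J_\ell=\tfrac12(H_\ell^{(1)}+H_\ell^{(2)})$, the absence of $H_\ell^{(2)}$ contributions at infinity (via \eqref{e:hankellarge}) forces every $a_\ell$ to vanish.

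For the forward implication, assume $\lambda$ is a resonance with outgoing state $u\not\equiv 0$, and set $g=Vu$, which is supported in $\operatorname{supp} V\subset\{\chi=1\}$. Since $(-\Delta-\lambda^2)u=-g$ and $(-\Delta-\lambda^2)R_0(\lambda)g=g$, the sum $w=u+R_0(\lambda)g$ solves $(-\Delta-\lambda^2)w=0$ on $\Real^2$; by the first fact $R_0(\lambda)g$ is outgoing, hence so is $w$, and the second fact forces $w\equiv 0$. Thus $u=-R_0(\lambda)g$, and hence $g=Vu=-VR_0(\lambda)g$. If $g\equiv 0$, then $u=-R_0(\lambda)g\equiv 0$ as well, contradicting the hypothesis; so $g\not\equiv 0$, and the Birman--Schwinger principle (using $\chi g=g$, which holds since $\operatorname{supp} g\subset \operatorname{supp} V\subset\{\chi=1\}$) yields a pole of $\chi R_V(\lambda)\chi$ at $\lambda$.

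For the converse, choose $g\not\equiv 0$ with $g=-VR_0(\lambda)g$ and set $u=-R_0(\lambda)g$. Then $u$ is outgoing by the first fact; $(P-\lambda^2)u=-g+Vu=-g+g=0$ using $Vu=-VR_0(\lambda)g=g$; and $u\not\equiv 0$ because otherwise $g=Vu\equiv 0$.

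The principal obstacle is the uniqueness step for an outgoing solution of $(-\Delta-\lambda^2)w=0$ on all of $\Real^2$. When $\im\lambda>0$ this is covered by the $L^2$ argument already used in Lemma \ref{l:uhprescut}; in the fourth quadrant and on the real axis (excluding $0$) it requires combining regularity at the origin (to kill $Y_\ell$ contributions) with the branch-cut convention from Section \ref{s:bessel} and the asymptotics \eqref{e:hankellarge}, so that the vanishing of each $H_\ell^{(2)}$ coefficient at infinity forces the underlying $J_\ell$ coefficient to vanish. The remaining ingredients---Graf's addition formula and the Birman--Schwinger principle---are either standard or quoted directly from the paragraph preceding the lemma.
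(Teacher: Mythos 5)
Your proof is correct and follows essentially the same route as the paper's: both directions reduce to the Birman--Schwinger principle recorded just before the lemma, with the outgoingness of $R_0(\lambda)g$ for compactly supported $g$ checked via an explicit expansion of the free resolvent kernel (the paper uses the mode-by-mode variation-of-parameters formula where you use Graf's addition formula, which amounts to the same computation). The only substantive difference is that you spell out the uniqueness of global outgoing solutions of $(-\Delta-\lambda^2)w=0$, a step the paper leaves implicit when it asserts $V R_0(\lambda) g = Vu$ in the converse direction.
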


\begin{proof}
Suppose $\chi R_V(\lambda) \chi$ has a pole at $\lambda$. Take $g$ as in the Birman--Schwinger principle and let $u = R_0(\lambda) g$. Then $(P-\lambda^2) u = 0$ and $u$ is outgoing in the sense of \cite[Definition 3.32]{dyatlovzworski}. The equivalence of that definition with ours above follows from the fact that, by variation of parameters, if $g(r,\theta) = g_\ell(r) e^{i\ell \theta}$ then $[R_0(\lambda) g] (r,\theta) = \frac {\pi i}2 \int_0^\infty J_\ell(\lambda \min(r,r')) H_\ell^{(1)}(\lambda \max(r,r')) g_\ell(r')\, dr' e^{i\ell \theta}$.

Conversely, suppose  $u \not \equiv 0$ is outgoing and $(P-\lambda^2)u=0$. Let $g= (-\Delta-\lambda^2) u$. Then $V R_0(\lambda) g = Vu = (\Delta + \lambda^2)u = -g$, so $\chi R_V(\lambda) \chi$ has a pole at $\lambda$ by the Birman--Schwinger~principle.
\end{proof}

\subsubsection{Resonances at zero} In Section \ref{s:pereig} we have already extended the notion of resonance and resonant state to $\lambda=0$, with definitions in terms of the $c_\ell$ of the expansion \eqref{e:outdef0}.  Equivalently, we may say that $P$ has an $s$-resonance at zero if $Pu=0$ has a bounded solution which is not $O(|x|^{-1})$ as $|x| \to \infty$, that $P$ has a $p$-resonance at zero if $Pu=0$ has a solution which is $O(|x|^{-1})$ but not $O(|x|^{-2})$, and that $P$ has an eigenvalue at zero if $Pu=0$ has a 
nontrivial solution which is $O(|x|^{-2})$.

\subsubsection{Resonances on the Riemann surface of the logarithm}\label{s:riemsurf} In Section \ref{s:lcr} we extend some of our results beyond the branch cut of  $\mathbb C \setminus -i[0,\infty)$, to $\Lambda$, the Riemann surface of the logarithm. We denote points on $\Lambda$ by $\lambda = |\lambda| e^{i \arg \lambda}$, but without identifying values of $\arg$ that differ by an integer multiple of $2\pi$. (Identification of such values of $\arg$ gives the universal covering $\Lambda \to \mathbb C \setminus \{0\}$.) Thus $\log \colon \Lambda \to \mathbb C \setminus \{0\}$ is biholomorphic, and  the functions $H^{(1)}_\ell(\lambda r)$ and $H^{(2)}_\ell(\lambda r)$ continue analytically from $\mathbb C \setminus -i[0,\infty)$ to $\Lambda$, where we identify $\mathbb C\setminus -i[0,\infty)$ with $\{\lambda \in \Lambda \colon -\pi/2 < \arg \lambda < 3\pi/2\}$.  Using 
\eqref{e:rvr0}, this implies that $\chi R_V(\lambda)\chi$
continues meromorphically to $\Lambda.$

The same definitions of resonances and outgoing functions from Sections \ref{s:pereig} and \ref{s:rescut} still work, and  Lemmas \ref{l:uhprescut} and \ref{l:resequivcut} extend directly:

\begin{lem}  Let
$\lambda \in \Lambda$ and $\chi \in C_c^\infty(\mathbb R^2)$ with $\chi V = V$. Then $\lambda$ is a resonance if and only if $\chi R_V(\lambda) \chi$ has a pole at $\lambda$. Moreover, there are no resonances in $\{\lambda \in \Lambda \colon 0<|\arg \lambda - \pi/2| \le \pi/2\}$. 
\end{lem}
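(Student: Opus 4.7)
My plan is to mirror the proofs of Lemmas \ref{l:uhprescut} and \ref{l:resequivcut} on the full Riemann surface $\Lambda$, since each ingredient extends meromorphically or analytically from $\mathbb{C}\setminus -i[0,\infty)$ to $\Lambda$.

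For the pole characterization, I would follow the proof of Lemma \ref{l:resequivcut} line by line. The resolvent identity \eqref{e:rvr0} and the Birman--Schwinger principle continue meromorphically to $\Lambda$ because the free-resolvent kernel $\tfrac{i}{4}H_0^{(1)}(\lambda|x-y|)$ does, as noted at the start of Section \ref{s:riemsurf}. Given a pole of $\chi R_V(\lambda)\chi$, Birman--Schwinger supplies a compactly supported $g\not\equiv 0$ with $g=-VR_0(\lambda)g$; the function $u:=R_0(\lambda)g$ satisfies $(P-\lambda^2)u=0$, and the variation-of-parameters formula from the proof of Lemma \ref{l:resequivcut}, still valid on $\Lambda$ by analyticity, exhibits $u$ as outgoing. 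Conversely, if $u$ is an outgoing resonant state, I would set $g:=-Vu$ and show $u=R_0(\lambda)g$: the difference $v:=u-R_0(\lambda)g$ is a globally smooth outgoing solution of $(-\Delta-\lambda^2)v=0$, and expanding each angular mode simultaneously as $a_\ell J_\ell(\lambda r)$ (smoothness at $0$) and $c_\ell H_\ell^{(1)}(\lambda r)$ (outgoingness at $\infty$) forces $a_\ell=c_\ell=0$ because $J_\ell$ and $H_\ell^{(1)}$ are linearly independent. Thus $g=-VR_0(\lambda)g$ and $\chi R_V(\lambda)\chi$ has a pole.

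For the nonexistence claim, note that the region $\{0<|\arg\lambda-\pi/2|\le\pi/2\}$ is the closed upper half-plane of the principal sheet minus the positive imaginary axis, and I would split into two cases. If $\arg\lambda\in(0,\pi/2)\cup(\pi/2,\pi)$, then $\im\lambda>0$ and the Hankel asymptotic \eqref{e:hankellarge} gives $|H_\ell^{(1)}(\lambda r)|\lesssim r^{-1/2}e^{-(\im\lambda)r}$, so any resonant state lies in $H^2(\mathbb{R}^2)$ exactly as in Lemma \ref{l:uhprescut}; self-adjointness of $P$ then forces $\lambda^2\in\mathbb{R}$, but $\lambda^2=|\lambda|^2 e^{2i\arg\lambda}$ with $2\arg\lambda\in(0,\pi)\cup(\pi,2\pi)$ is not real, a contradiction. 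If $\arg\lambda\in\{0,\pi\}$, then $\lambda\in\mathbb{R}\setminus\{0\}$ and $\lambda^2>0$, so the Rellich uniqueness theorem \cite[Theorem~3.33]{dyatlovzworski} invoked in the proof of Lemma \ref{l:uhprescut} precludes nontrivial outgoing resonant states.

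The main obstacle is essentially bookkeeping: one must verify that each classical input---the resolvent identity, analytic Fredholm theory, the $L^2$ argument from $\im\lambda>0$, and Rellich's theorem---applies on the relevant subset of $\Lambda$ without modification. Because this subset sits inside the principal sheet away from the positive imaginary axis, the correct branch of $H_\ell^{(1)}$ is in force throughout and everything transfers cleanly, with no genuinely new analytic input needed.
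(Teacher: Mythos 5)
Your proposal is correct and follows exactly the route the paper intends: the paper gives no separate proof here, asserting only that Lemmas \ref{l:uhprescut} and \ref{l:resequivcut} ``extend directly'' to $\Lambda$, and your argument is precisely that extension (with the region $0<|\arg\lambda-\pi/2|\le\pi/2$ correctly parsed and split into the $\im\lambda>0$ case and the real-axis Rellich case). The only addition you make is to justify $u=R_0(\lambda)g$ in the converse direction via the regular/outgoing mode expansion, which is a detail the paper's proof of Lemma \ref{l:resequivcut} leaves implicit.
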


\section{Eigenvalues and resonances} 
%{\em I see I repeated some of what Kiril wrote; this will need to be edited later.}
%Suppose $V\in L^\infty_c(\Real^2)$ is supported in $\{x:|x|<\leq \rho\}$.  Then if $\lambda \not =0$ a solution $u\in L^2_{\loc}(\Real^2)$ of $(-\Delta +V-\lambda^2)u=0$ is
%given, for $|x|=r>\rho$, by 
%\begin{equation}\label{eq:out}
%u(r,\theta)=\sum_{\ell=-\infty}^\infty (c_\ell(u)H_{\ell}^{(1)}(\lambda r)+ d_\ell(u)H_{\ell}^{(2)}(\lambda r))e^{il\theta}.
%\end{equation}
%Thus, if $0<\arg \lambda<\pi$ $u\in L^2(\Real^2)$ if and only if $d_{\ell}(u)=0$ for all $\ell$.  In this 
%case, if $u$ is nontrivial $\lambda^2$ is an eigenvalue of $-\Delta +V$.   In general, if $u$ is nontrivial and $d_\ell(u)=0$ for all $\ell$, $u$ is called an {\em outgoing}  solution.  If $\lambda \in \Lambda$ but $\arg \lambda\not \in (0,\pi)$, then we say that $\lambda$ is a {\em resonance}.

In this section we study eigenvalues and resonances near $0$,
proving Theorems \ref{t:persist} and \ref{t:lt}.  We begin with the circular well case, where $(-\Delta +V-\lambda^2)u=0$ can be solved explicitly using Bessel and Hankel functions.

\subsection{The circular well}\label{s:cwr}  For 
the circular well 
potential we can characterize eigenvalues and resonances as the zeros of certain functions defined using Bessel and Hankel functions.  Our main
tools are expansions near $0$ 
of such functions, Taylor approximations, and Rouch\'{e}'s Theorem.  Lemmas \ref{l:2c} and \ref{l:pwrc} together
 prove Theorem \ref{t:lt} for the 
special case $V_\ve= (-a_0^2+\ve)\bbo_\rho$. Combined with 
Lemma \ref{l:0mc} they prove Theorem \ref{t:persist} for
this $V_\ve$.

We consider the circular well potential 
$V=V(r)=-a^2\bbo_\rho$ given by \eqref{e:circwelldef} with
$a,\;\rho>0$.
We note that some of our results hold for other values of $a$, but since our main interest is tracking negative eigenvalues as they reach the continuous spectrum, it suffices to consider $a>0$. 
We define
$$\mu=\mu(\lambda, a)=\sqrt{\lambda^2+a^2}$$
where we take the square root to be positive for $\lambda>0$ and to depend continuously on $\lambda$.  By the Bessel  equation and asymptotics \eqref{e:bessdef}, \eqref{e:bessdiff}, \eqref{eq:Yexp}, if $u\in L^2_{\loc}(\Real^2) $ solves $(-\Delta +V-\lambda^2)u=0$, then 
\begin{equation}\label{eq:inside}
u(r,\theta)= \sum_{\ell=-\infty}^\infty b_{\ell} (u)J_\ell(\mu r)e^{i\ell \theta},\;\text{for}\; 0\leq r<\rho.
\end{equation}
Using that $u$ and $\partial_ru$ are continuous at $r=\rho$ this yields, when combined with \eqref{e:outdef}, 
\begin{equation}\label{eq:acd}
b_\ell =\frac{c_\ell  H^{(1)}_\ell(\lambda\rho)+d_\ell H^{(2)}_\ell(\lambda\rho)}{J_{\ell}(\mu \rho)}=\frac{\lambda}{\mu}\frac{c_\ell H^{(1)'}_\ell(\lambda\rho)+d_\ell H^{(2)'}_\ell(\lambda\rho)}{J_\ell '(\rho\mu)}.
\end{equation}
We note the following consequence if $u$ is nonvanishing
in the $\ell$th mode
\begin{equation}
    \label{eq:econd}
c_\ell(u)\not=0, d_\ell(u)=0 \Leftrightarrow 
\mu H_\ell^{(1)}(\rho \lambda)J'_{\ell}(\rho \mu)-\lambda H_\ell^{(1)\prime}(\rho \lambda) J_{\ell}(\rho \mu)=0
\end{equation}
for $\lambda \not =0$.
This shall be important for our study of
eigenvalues and resonances, so we define
\begin{align}
\label{eq:Qdef}Q_\ell(\lambda,  a)& := \mu H^{(1)}_\ell (\rho \lambda ) J'_\ell(\rho \mu)-\lambda H_\ell^{(1)\prime}(\rho \lambda) J_{\ell}(\rho \mu).
\end{align}
Although $Q$ depends on $\rho$ as well, we omit this
in our notation because we will not be varying $\rho.$ Using the recurrence relations \eqref{eq:recur} gives
\begin{equation}\label{eq:Ql-1}
Q_\ell(\lambda,a) = \mu J_{\ell-1}(\rho \mu)H_{\ell}^{(1)}(\lambda \rho) -\lambda J_\ell (\rho \mu) H_{\ell-1}^{(1)}(\lambda \rho),
\end{equation}
which for $\ell=0$ can also be written
\begin{equation}
Q_0(\lambda,a)=-\mu J_1(\rho \mu)H_0^{(1)}(\rho \lambda)+\lambda J_0(\rho \mu)H_1^{(1)}(\lambda \rho).
\end{equation}

Now we  allow the depth of the well to depend on 
$\ve$, so that 
\begin{equation}\label{eq:cwep}
a^2=a^2(\ve)=a_0^2-\ve, \; a_0>0, \; \text{ and set }V_\ve(r)= -a^2(\ve)\bbo_\rho (r).
\end{equation}
Notice that $\ve>0$ means the well is shallower than with
$\ve=0$, and $\ve<0$ means the well is deeper than for
$\ve=0.$  We remark that we shall assume $a_0>0$ throughout this section.
\begin{lem}\label{l:limit}  Let $a_\ve$ be as in \eqref{eq:cwep}, with $a_0>0$.
Suppose there is an $\ve_0>0$ so 
that for each $\ve\in (-\ve_0,0)$ there exists nontrivial
$\psi_\ve\in L^2(\Real^2)$, $\partial_\theta \psi_\ve=i\ell \psi_\ve$,
$(-\Delta -a^2_\ve\bbo_\rho -E_\ve)\psi_\ve=0$, and $E_\ve \uparrow 0$ as $\ve \uparrow 0$.  Then  $J_{|\ell|-1}(a_0\rho)=0$, and if $\ell=0$ then this means $J_1(a_0\rho)=0.$
\end{lem}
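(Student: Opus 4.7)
The plan is as follows. For $\ve \in (-\ve_0, 0)$ set $\lambda_\ve = i\sqrt{-E_\ve}$, so $\im \lambda_\ve > 0$ and $\lambda_\ve \to 0$ as $\ve \uparrow 0$. Since $\partial_\theta \psi_\ve = i\ell \psi_\ve$, I write $\psi_\ve(r,\theta) = f_\ve(r) e^{i\ell\theta}$. Regularity at the origin forces $f_\ve(r) = b_\ell J_{|\ell|}(\mu_\ve r)$ for $r < \rho$, where $\mu_\ve = \sqrt{\lambda_\ve^2 + a_\ve^2}$ and $Y_{|\ell|}$ is excluded as singular at $0$; meanwhile $\psi_\ve \in L^2$ together with \eqref{e:hankellarge} (applicable because $\im \lambda_\ve > 0$) forces $f_\ve(r) = c_\ell H^{(1)}_{|\ell|}(\lambda_\ve r)$ for $r > \rho$, since $H^{(2)}_{|\ell|}$ grows exponentially in the upper half plane. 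Matching $f_\ve$ and $f_\ve'$ at $r = \rho$ is exactly \eqref{eq:econd} at index $|\ell|$, giving $Q_{|\ell|}(\lambda_\ve, a_\ve) = 0$.

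The next step is to pass to the limit $\ve \uparrow 0$, in which $\lambda_\ve \to 0$ and $\mu_\ve \to a_0$. For $|\ell| \ge 1$, using \eqref{eq:Ql-1} and dividing by $H^{(1)}_{|\ell|}(\lambda_\ve\rho)$ (nonzero for small $|\lambda_\ve|$) gives
\begin{equation*}
\mu_\ve J_{|\ell|-1}(\rho \mu_\ve) \;=\; \lambda_\ve J_{|\ell|}(\rho \mu_\ve) \cdot \frac{H^{(1)}_{|\ell|-1}(\lambda_\ve \rho)}{H^{(1)}_{|\ell|}(\lambda_\ve \rho)}.
\end{equation*}
The small-argument asymptotics \eqref{eq:Yexp}, combined with $H^{(1)}_m = J_m + i Y_m$, show that $H^{(1)}_{|\ell|-1}(z)/H^{(1)}_{|\ell|}(z)$ is $O(z)$ for $|\ell| \ge 2$ and $O(z\log z)$ for $|\ell| = 1$, so the right-hand side tends to zero. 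The left-hand side tends to $a_0 J_{|\ell|-1}(a_0\rho)$, whence $J_{|\ell|-1}(a_0 \rho) = 0$.

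For $\ell = 0$ I apply the same strategy to the alternative form $Q_0(\lambda, a) = -\mu J_1(\rho\mu) H^{(1)}_0(\lambda\rho) + \lambda J_0(\rho\mu) H^{(1)}_1(\lambda\rho)$: dividing by $H^{(1)}_0(\lambda_\ve\rho) \sim (2i/\pi)\log(\lambda_\ve\rho)$ and using $H^{(1)}_1(\lambda\rho) \sim -2i/(\pi \lambda\rho)$, the second term contributes $O(1/\log\lambda_\ve) \to 0$, forcing $J_1(a_0\rho) = 0$. Since $J_{-1} = -J_1$, this is consistent with the formula $J_{|\ell|-1}(a_0\rho) = 0$ and supplies the extra assertion in the $\ell = 0$ case.

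The only potentially delicate point is organizing the three small-argument regimes for the Hankel ratios ($|\ell| \ge 2$, $|\ell| = 1$, $|\ell| = 0$), each of which follows at once from \eqref{eq:Yexp}; the rest is a mechanical passage to the limit in the matching condition $Q_{|\ell|}(\lambda_\ve, a_\ve) = 0$.
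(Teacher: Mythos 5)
Your proof is correct and follows essentially the same route as the paper's: both reduce the eigenvalue condition to $Q_{|\ell|}(i\sqrt{|E_\ve|},a_\ve)=0$ and then use the small-argument Hankel asymptotics in the form \eqref{eq:Ql-1} to see that the term $\mu J_{|\ell|-1}(\rho\mu)H^{(1)}_{|\ell|}(\lambda\rho)$ dominates as $\lambda_\ve\to 0$, forcing $J_{|\ell|-1}(a_0\rho)=0$. Your presentation (dividing by the dominant Hankel function and passing to the limit, plus spelling out why the interior solution is $J_{|\ell|}(\mu r)$ and the exterior one is $H^{(1)}_{|\ell|}(\lambda r)$) is just a slightly more explicit rendering of the paper's order-of-magnitude comparison.
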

We shall later see that the implication goes the the other 
way as well.
\begin{proof}
Using the notation of \eqref{e:outdef} we see that we must
have $d_\ell(\psi_\ve)=0$ but $c_\ell(\psi_\ve)\not=0$.
Using \eqref{eq:econd} and  \eqref{eq:Qdef} we see that
this can happen if and only $Q_{\ell}(i\sqrt{|E_\ve|},a_\ve)=0$.
%Thus from \eqref{eq:econd} this can happen if and only if
%\begin{equation}\label{eq:e2}
%\left(\mu H_\ell^{(1)}(\rho \lambda)J'_{\ell}(\rho %\mu)-%\lambda H_\ell^{(1)\prime}(\rho \lambda) J_{\ell}(\rho \mu)\right)\upharpoonright_{\lambda =i\sqrt{|E_\ve|}}=0.
%\end{equation}
%We use the identities
%\begin{align}\label{eq:recur}
 %   \mathcal{C}_{\ell}'(z)& =\mathcal{C}_{\ell -1}(z)-\frac{l}{z}\mathcal{C}_{\ell}(z), \\
%    J_0'(z)=-J_1(z),& \quad H_0^{(1)'}(z)= -H_1^{(1)}(z)
%\end{align}
%where $\mathcal{C}_\ell$ is one of $J_\ell$, %$H_{\ell}^{(1)},$  $H_{\ell}^{(2)}$, or $Y_{\ell}$
%to rewrite \eqref{eq:e2} as 
%\begin{equation}\label{eq:ellnot0}
%\left( \mu J_{\ell-1}(\rho \mu)H_{\ell}^{(1)}(\lambda \rho) -\lambda J_\ell (\rho \mu) H_{\ell-1}^{(1)}(\lambda \rho)\right)\upharpoonright_{\lambda =i\sqrt{|E_\ve|}}=0,\; \ell \not = 0
%\end{equation}
%or 
%\begin{equation}\label{eq:ell0}\left( -\mu J_1(\rho \mu)H_0^{(1)}(\rho \lambda)+\lambda J_0(\rho \mu)H_1^{(1)}(\lambda \rho)\right) \upharpoonright_{\lambda =i\sqrt{|E_\ve|}}=0, \; \ell =0.
%\end{equation}

By \eqref{e:bessdef} and \eqref{eq:Yexp}, as $\lambda \rightarrow 0$ we have   $|H_{\ell}^{(1)}(\lambda)|\propto |\lambda|^{-\ell} $ for $\ell >0$ and $|H_0^{(1)}(\lambda)|\propto |\log \lambda|$.  Using these and the continuity of $J_{\ell}$, $J_{\ell-1}$  in \eqref{eq:Ql-1} shows that we must have 
$J_{\ell-1}(\rho \mu(0,a_0))=J_{\ell-1}(\rho a_0)=0$ if $\ell \ge 0$. The 
result for $\ell<0$ follows from the result for $|\ell|$.
\end{proof}

Lemma \ref{l:limit} has a natural interpretation in terms 
of a resonance or eigenvalue of $-\Delta-a_0^2\bbo_\rho$ at $0$.
\begin{lem} \label{l:resinlimit} For $a_0>0$ the operator $-\Delta-a_0^2\bbo_\rho$
\begin{itemize}
    \item has an $s$-resonance
if and only if $J_{1}(\rho a_0)=0$.
\item has a $p$-resonance 
if and only if $J_{0}(\rho a_0)=0$.
\item has an eigenvalue $0$ in the $\ell$th mode, $|\ell|\geq 2$, if and only if 
$J_{|\ell|-1}(\rho a_0)=0$.
\end{itemize}
\end{lem}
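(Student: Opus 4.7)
The plan is to solve $(-\Delta - a_0^2 \bbo_\rho)u = 0$ explicitly, mode by mode, and match at $r = \rho$. Writing $u(r,\theta) = u_\ell(r) e^{i\ell\theta}$, the equation inside the well reduces to Bessel's equation, so regularity at the origin forces $u_\ell(r) = b_\ell J_\ell(a_0 r)$ for $r < \rho$. Outside the well $u$ is harmonic, and the outgoing condition \eqref{e:outdef0} selects $u_\ell(r) = c_\ell r^{-|\ell|}$ for $\ell \ne 0$ and $u_0(r) = c_0$ for $\ell = 0$ (the $r^{|\ell|}$ and $\log r$ pieces are excluded).

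Next I would impose continuity of $u_\ell$ and $u_\ell'$ at $r = \rho$, giving a $2 \times 2$ linear system in $(b_\ell, c_\ell)$. A short calculation shows its determinant is a nonzero multiple of
\[
|\ell| J_\ell(a_0\rho) + a_0 \rho\, J_\ell'(a_0\rho).
\]
For $\ell \ge 1$, the recurrence $z J_\ell'(z) = z J_{\ell-1}(z) - \ell J_\ell(z)$ from \eqref{eq:recur} collapses this expression to $a_0\rho\, J_{\ell-1}(a_0\rho)$. For $\ell = 0$ the expression reduces to $a_0\rho\, J_0'(a_0\rho) = -a_0\rho\, J_1(a_0\rho)$. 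For $\ell \le -1$ the dual recurrence $z J_\ell'(z) = -z J_{\ell+1}(z) + \ell J_\ell(z)$, combined with the parity relation $J_{-m}(z) = (-1)^m J_m(z)$, yields $J_{|\ell|-1}(a_0\rho) = 0$ as well. Hence in every case the existence of a nontrivial outgoing solution in the $\ell$-th mode is equivalent to the Bessel-zero condition stated in the lemma.

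Finally I would classify the resulting resonance type using the definitions in Section \ref{s:pereig}, together with the classical fact that the positive zeros of $J_{m-1}$ and $J_m$ strictly interlace (so they share no common positive zero). If $J_1(a_0\rho) = 0$ then $J_0(a_0\rho) \ne 0$, so the $\ell=0$ matching gives $c_0 = b_0 J_0(a_0\rho) \ne 0$: an $s$-resonance. If $J_0(a_0\rho) = 0$ then $J_1(a_0\rho) \ne 0$, so each of $\ell = \pm 1$ produces $c_{\pm 1} \ne 0$ while $c_0 = 0$: a $p$-resonance. If $J_{|\ell|-1}(a_0\rho) = 0$ with $|\ell| \ge 2$, then $c_\ell \ne 0$, but the outgoing profile $r^{-|\ell|}$ satisfies $\int_\rho^\infty r^{1 - 2|\ell|}\, dr < \infty$, so $u \in L^2(\mathbb R^2)$: a zero eigenvalue.

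The main obstacle is purely bookkeeping: I must track sign conventions for $\ell < 0$ via $J_{-m} = (-1)^m J_m$ and select the correct recurrence, and I must check via the interlacing property that the companion Bessel function does not also vanish at $a_0\rho$, which would otherwise confuse the classification. No analytic subtleties arise because all computations take place at $\lambda = 0$.
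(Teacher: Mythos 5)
Your proposal is correct and follows essentially the same route as the paper's (very terse) proof: solve mode by mode using \eqref{eq:inside} inside and \eqref{e:outdef0} outside, impose continuity of $u$ and $\partial_r u$ at $r=\rho$, and collapse the resulting determinant via the recurrences \eqref{eq:recur} to the condition $J_{|\ell|-1}(\rho a_0)=0$. Your additional care with the interlacing of Bessel zeros (to confirm $c_\ell\neq 0$ in each case) and with the $L^2$ criterion for $|\ell|\ge 2$ correctly supplies the classification details that the paper leaves implicit.
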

\begin{proof}
This follows by
using \eqref{e:outdef0}, \eqref{eq:inside}, the fact that 
the solution $u$ and $\partial_r u$ must be continuous across
$r=\rho$, and \eqref{eq:recur}.
\end{proof}

In the remainder of Section \ref{s:cwr} we prove a sequence of lemmas which, among other things, compute asymptotics of eigenvalues and resonances as $\varepsilon \to 0$. Figures \ref{f:3foldres} and \ref{f:ell=0} illustrate these results numerically.  Recalling that with $V$
real, $|\lambda|e^{i\arg \lambda}$ is a resonance of $-\Delta+V$ if and only if $|\lambda|e^{i(\pi -\arg \lambda)}$ is a resonance, we focus on the region with $|\arg \lambda|\leq \pi/2.$

Our first result is for the case in which 
$-\Delta+V_0$ has an $s$-resonance.  
\begin{lem}\label{l:0mc}
Let $a_0>0$ and suppose for $-\ve_1<\ve<0$ there exists a nontrivial radial function
$\psi_\ve\in L^2(\Real^2)$ , 
%$\varphi_\ve(r,\theta)=\tilde{\varphi}(r)e^{i\ell\theta}$, 
with $(-\Delta -a^2_\ve\bbo_\rho -E_\ve)\psi_\ve=0$, and $E_\ve \uparrow 0$ as $\ve \uparrow 0$.  Then there are  $\delta_0,\; \ve_0>0$ such that for $0<\ve<\ve_0$ the operator
$-\Delta -a_{\ve}\bbo_\rho$ has  no resonance in $\{\lambda\in\Complex: 0\leq |\lambda|<\delta_0,\; |\arg \lambda|\leq \pi/2\}$.  Moreover, as $\ve \uparrow 0$,
$E_{\ve}= -\frac{4}{\rho^2} e^{4/(\rho^2 \ve) - 2\gamma}(1+O(\ve))$, with $\gamma$ the Euler constant as in \eqref{eq:Yexp}.
\end{lem}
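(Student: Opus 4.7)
The plan is to reduce the problem to studying zeros of $Q_0(\lambda, a_\varepsilon)$ from \eqref{eq:Qdef}, since the hypothesis concerns a radial (mode $\ell=0$) family. By Lemma \ref{l:limit} our assumption forces $J_1(a_0\rho)=0$; moreover $J_0(a_0\rho)\ne 0$, since otherwise $J_0'=-J_1$ combined with the Bessel ODE at $a_0\rho$ would force $J_0\equiv 0$. I then expand $Q_0(\lambda,a_\varepsilon)$ for small $\lambda,\varepsilon$. Writing $\mu=\sqrt{a_0^2-\varepsilon+\lambda^2}$ and using $J_1'(a_0\rho)=J_0(a_0\rho)$ from \eqref{eq:recur}, one gets $J_1(\rho\mu)=\frac{\rho(\lambda^2-\varepsilon)}{2a_0}J_0(a_0\rho)+O((\lambda^2-\varepsilon)^2)$ and $J_0(\rho\mu)=J_0(a_0\rho)+O((\lambda^2-\varepsilon)^2)$. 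Combined with the small-argument Hankel expansions from \eqref{eq:Yexp}, a short computation gives
\begin{equation*}
Q_0(\lambda,a_\varepsilon)=-\frac{iJ_0(a_0\rho)}{\pi\rho}\Bigl[\rho^2(\lambda^2-\varepsilon)\bigl(\log(\rho\lambda/2)+\gamma-i\tfrac{\pi}{2}\bigr)+2\Bigr]+R(\lambda,\varepsilon),
\end{equation*}
with $R$ a provably lower-order error when $|\lambda|$ and $|\varepsilon|$ are small.

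For $\varepsilon<0$, I look for $\lambda=i\kappa$ with $\kappa>0$. On this axis the bracket becomes real, and the equation reads $\rho^2(\kappa^2+\varepsilon)[\log(\rho\kappa/2)+\gamma]=2$. Since the eigenvalue is expected to be exponentially small in $1/|\varepsilon|$, I iterate: neglecting $\kappa^2$ first gives $\log(\rho\kappa/2)+\gamma\approx 2/(\rho^2\varepsilon)$, i.e.\ $\kappa\approx(2/\rho)e^{2/(\rho^2\varepsilon)-\gamma}$, and a check shows the omitted term is exponentially smaller than the retained ones. A Rouch\'e argument on a small circle around this approximate root upgrades it to a genuine zero of $Q_0$, yielding $E_\varepsilon=-\kappa^2=-(4/\rho^2)e^{4/(\rho^2\varepsilon)-2\gamma}(1+O(\varepsilon))$.

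For $\varepsilon>0$ with $\lambda$ in the closed right half-plane and $|\lambda|<\delta_0$, I must rule out zeros of $Q_0$. On the positive imaginary axis, the same real equation $\rho^2(\kappa^2+\varepsilon)[\log(\rho\kappa/2)+\gamma]=2$ has a negative left side whenever $\kappa<(2/\rho)e^{-\gamma}$, so choosing $\delta_0<(2/\rho)e^{-\gamma}$ forbids solutions there. The open first quadrant and the positive real axis contain no resonances, by Lemma \ref{l:uhprescut} and Rellich's theorem respectively. It remains to exclude the fourth quadrant. The heuristic is that on the principal sheet the leading equation is solved by $\lambda\approx(2i/\rho)e^{2/(\rho^2\varepsilon)-\gamma}$, whose modulus blows up as $\varepsilon\downarrow 0$; so if $\delta_0$ is small enough the leading bracket stays close to $2$ on $|\lambda|=\delta_0$, and together with control of $R$ this gives, via Rouch\'e or the argument principle, no zeros in the half-disk.

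The main obstacle is making the fourth-quadrant exclusion for $\varepsilon>0$ quantitative: solutions of the leading equation on the principal sheet are pushed far from the origin, but one must verify that no spurious zero migrates in from elsewhere. The strategy is to split the analysis by the size of $|\lambda|^2$ relative to $\varepsilon$, obtain a uniform lower bound on the modulus of the leading bracket in each regime, and then absorb the error $R$ by a Rouch\'e-type argument on the boundary of the half-disk. Once this is in place, the asymptotic for $E_\varepsilon$ in the $\varepsilon<0$ case drops out of the iterated solution of the leading equation, with remainder controlled by the next-order terms in the Taylor/Hankel expansions.
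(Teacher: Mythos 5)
Your proposal is correct and follows essentially the same route as the paper: reduce to the zeros of $Q_0$, derive the same leading bracket $\rho^2(\lambda^2-\ve)\bigl(\log(\rho\lambda/2)+\gamma-i\pi/2\bigr)+2$ (the paper absorbs the $\lambda^2$ factor into an $O(\lambda^2\log\lambda)$ error), solve the leading equation on the imaginary axis for $\ve<0$ and upgrade by Rouch\'e, and rule out zeros for $\ve>0$. The ``main obstacle'' you flag is dispatched in one line in the paper: for $\ve>0$ and $|\arg\lambda|\le\pi/2$ the term $-\rho^2\ve\,\re\log\lambda$ is nonnegative, giving the uniform lower bound $|g|\ge|J_0(\rho a_0)|\bigl(\tfrac{2}{\pi\rho}-\ve\tfrac{\rho}{\pi}\log|\lambda|\bigr)+O(\ve)$ on the entire half-disk, so no region-splitting or tracking of migrating zeros is needed.
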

We see then that in the language of the introduction a negative eigenvalue corresponding to a rotationally symmetric eigenfunction (that is, one in the $\ell =0$ mode) does not persist.

\begin{proof}
From Lemma \ref{l:resinlimit} we see that $J_1(\rho a_0)=0$.  Thus we use Taylor polynomials to expand $Q_0(\lambda, a_{\ve})$ for small $|\lambda| $ and $|\ve|$ with $J_1(\rho a_0)=0$.  
As preliminaries,  note that 
$$\mu(\lambda, a_{\ve})= a_0+\frac{\lambda^2-\ve}{2a_0}-\frac{1}{8a_0^3}(\lambda^2-\ve)^2+O((\lambda^2-\ve)^3),$$ and, using $J_1(\rho a_0)=0$, \eqref{eq:recur} and the Bessel differential equation \eqref{e:bessdiff},
\begin{align*}
\frac{d}{d\mu}(\mu J_1(\rho \mu))\upharpoonright_{\mu=a_0}& = a_0\rho J_1'(\rho a_0)= a_0\rho J_0(\rho a_0),\\
\frac{d^2}{d\mu^2}(\mu J_1(\rho \mu))\upharpoonright_{\mu=a_0}& = 2\rho J_1'(\rho a_0)+a_0\rho^2J_1''(\rho a_0)= \rho J_0(\rho a_0).
\end{align*}
We note that here and below all errors are uniform in $|\arg \lambda|\leq \pi/2.$
Using these to expand $\mu J_1(\rho \mu)$ in $\lambda$ and $\ve$ yields
\begin{align*}
\mu J_1(\rho \mu) &= a_0\rho J_0(\rho a_0) \left( \frac{\lambda^2-\ve}{2a_0}-\frac{1}{8a_0^3}(\lambda^2-\ve)^2\right) + \frac{\rho}{2} J_0(\rho a_0)\left( \frac{1}{2a_0}(\lambda^2-\ve)\right)^2 + O((\lambda^2-\ve)^3)\\ & = -\frac{1}{2}J_0(\rho a_0) \rho \ve +O((\lambda^2-\ve)^3).
\end{align*}
This, along with the small $\lambda$ asymptotics of $H^{(1)}_0(\rho \lambda)$ and 
$H^{(1)}_1(\rho \lambda)$ from \eqref{e:bessdef} and \eqref{eq:Yexp}, gives
\begin{align*}
Q_0(\lambda,  a_{\ve}) =& -\left( -\frac{1}{2}J_0(\rho a_0)\rho \ve + O((\lambda^2-\ve)^3) \right)\left(1+\frac{2i}{\pi}\log \left(\frac{\lambda \rho}{2}\right)  +\frac{2i}{\pi}\gamma+O(\lambda^2 \log \lambda)\right) \\ &+ i\lambda \left(J_0(\rho a_0)-J_0'(\rho a_0) \frac{\rho\ve}{2a_0}+O(\lambda^2)+O(\ve^2)\right)\left(-\frac{2}{\pi \lambda \rho}+O(\lambda \log \lambda)\right).
\end{align*}
Using \eqref{eq:recur}, $J_0'(\rho a_0)=0$.
Now set 
\begin{equation}\label{eq:g0c}
g(\lambda, a_0,\ve)= J_0(\rho a_0)\left( \frac{\rho }{2}\ve \left(1+\frac{2i}{\pi}\log \left(\frac{\lambda \rho}{2}\right)+\frac{2i}{\pi}\gamma \right) -\frac{2i}{\pi \rho}\right)\end{equation}
which is chosen so that
\begin{equation}\label{eq:Q0g0}
Q_0(\lambda,  a_{\ve})=g(\lambda, a_0,\ve)+ O(\lambda^2 \log \lambda)+O(\ve^3 \log \lambda)+ O(\ve^2).
\end{equation}
Set $\tilde{\lambda}_\ve= \frac{2}{\rho}\exp( \frac 2 {\rho^2 \ve} -\gamma + \frac{i\pi}2)$ and note that 
\begin{equation}\label{eq:0modesol}
g(\lambda, a_0,\ve)=0 \Leftrightarrow \lambda =\tilde{\lambda}_\ve.
\end{equation}
We have $\lim_{\ve \uparrow 0} \tilde{\lambda}_\ve=0$ but $\lim_ {\ve \downarrow 0} |\tilde{\lambda}_\ve|=\infty$.

First consider $\ve<0$.
For $w\in \Complex$ with $|w|$ small, 
\begin{equation}\label{eq:ges}
g(\tilde{\lambda}_\ve (1+w),a_0,\ve) =i J_0(\rho a_0)\frac{\rho}{\pi} w\ve(1+O(w)).\end{equation}  Since $\ve\log \tl_\ve=O(1)$ and $\tl_\ve=O(e^{2/(\rho^2\ve)})$, 
for $\ve<0$ with $|\ve|$ sufficiently small we can apply Rouch\'{e}'s Theorem to the pair
$Q_0(\lambda, a_\ve)$ and $g(\lambda, a_0,\ve)$  on the circle 
with center $\tl_\ve$ and radius $|\tl_\ve||\ve|^{1/2}$
%$\{\lambda= \tilde{\lambda}_\ve (1+|\ve|^{1/2}z),\; z\in \Complex,\;|z|=1\}$ 
to see that there is an $\ve_0>0$ so that $Q_0(\lambda, a_\ve) $ has a single simple zero at 
$\lambda_\ve$ satisfying $\lambda_\ve -\tilde{\lambda}_\ve=O(\ve^{1/2} \tilde{\lambda}_{\ve})$ when $-\ve_0<\ve<0$.
Writing  $\lambda_\ve=\tl_\ve(1+f)$, using \eqref{eq:Q0g0}
and \eqref{eq:ges}
%and Taylor expanding $g$ about $\tl_\ve$ 
yields
$$0=Q_0(\lambda_\ve,a_0,\ve)= g(\tl_\ve(1+f),a_0,\ve)+ O(\ve^2)= \ve J_0(\rho a_0)\frac{i\rho}{\pi} f +O(\ve f^2)+O(\ve^2). $$
Since $f=O(\ve^{1/2})$, this shows $f=O(\ve).$
Since $0<\arg \lambda_\ve<\pi,$  $\lambda_\ve^2$ is an eigenvalue of $-\Delta -a_\ve^2\bbo_\rho.$

If $\ve>0$ and  $|\arg\lambda|\le \pi/2$, then $|g(\lambda, a_0,\ve)|\geq  
|J_0(\rho a_0)|\left(\frac{2}{\pi \rho}-\ve \frac{\rho}{\pi }\re \log \lambda \right) +O(\ve)$, so that
$$|Q_0(\lambda,  a_{\ve})|\geq |J_0(\rho a_0)|\big(\tfrac{2}{\pi \rho}-\ve \tfrac{\rho}{\pi }\re \log \lambda \big) +O(\ve) + O(\ve^3 \log \lambda) +O(\lambda^2 \log \lambda).$$
Hence, shrinking $\ve_0$ if necessary,
there is $\delta_0>0$ so that $Q_0(\lambda,  a_{\ve})\not =0$ for $0<\ve<\ve_0$ and $\{ \lambda: \; |\lambda|<\delta_0, \; |\arg \lambda|\leq \pi/2\}$, giving a
resonance-free region.
\end{proof}
We remark that the only place in which the assumption on the eigenvalue $E_\ve$ and eigenfunction of 
$-\Delta-a_\ve^2\bbo_\rho$ was used was to show $J_1(\rho a_0)=0$.  Starting with that assumption, the same proof shows that $-\Delta -a_\ve^2\bbo_\rho$ has an eigenvalue
increasing to $0$ as $\ve \uparrow 0$, with expansion as given.  This is also true (with obvious modifications) for the proofs of Lemmas \ref{l:2c} and \ref{l:pwrc}.

Next we turn to the case corresponding to $-\Delta +V_0$ having eigenvalue $0$.  
%In studying resonances below, we recall that since $V_\ve$ is real, if $-\Delta+V_\ve$ has a
%resonance in the (open) fourth quadrant, it also has one in the
%third quadrant.  
Here we will focus on eigenvalues and on
resonances in the fourth quadrant, but in Section \ref{s:lcr}
we will show that
for $\ve$ having either sign there are many more 
resonances related to this eigenvalue at $0$.
\begin{lem}\label{l:2c}
Let $|\ell| \geq 2$, and suppose  for $-\ve_1<\ve<0$ there exist nontrivial
$\psi_\ve\in L^2(\Real^2)$,  $\partial_\theta \psi_\ve=i\ell \psi_\ve$
%$\varphi_\ve(r,\theta)=\tilde{\varphi}(r)e^{i\ell\theta}$, 
with $(-\Delta -a^2_\ve\bbo_\rho -E_\ve)\psi_\ve=0$, and $E_\ve \uparrow 0$ as $\ve \uparrow 0$.  Then there is an $\ve_0>0$ so that for $0<|\ve|<\ve_0$, $-\Delta -a_\ve^2 \bbo_\rho$ has a resonance satisfying $\lambda =\sqrt{\ve (|\ell| -1)/|\ell|  }+O(\ve^{3/2}(1+\delta_{|\ell|,2}\log |\ve|)).$
Moreover, as  $\ve \uparrow 0$, $E_\ve = \frac{|\ell| -1}{|\ell |}\ve+ O(\ve^2(1+\delta_{|\ell|,2}\log |\ve|))$.
\end{lem}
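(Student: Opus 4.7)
The plan is to adapt the Rouch\'{e}-based strategy from the proof of Lemma \ref{l:0mc}. First, Lemma \ref{l:limit} applied to the hypothesis forces $J_{|\ell|-1}(\rho a_0)=0$, so by Lemma \ref{l:resinlimit} the operator $-\Delta - a_0^2\bbo_\rho$ has a zero eigenvalue in the $\ell$-mode. The identities $J_{-\ell}=(-1)^\ell J_\ell$ and $H^{(1)}_{-\ell}=(-1)^\ell H^{(1)}_\ell$ give $Q_{-\ell}=Q_\ell$, so it suffices to treat $\ell\ge 2$; resonances in the $\ell$-mode then correspond via \eqref{eq:econd} to zeros of $Q_\ell(\lambda,a_\ve)$ defined in \eqref{eq:Qdef}--\eqref{eq:Ql-1}.

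Next I expand $Q_\ell(\lambda,a_\ve)$ jointly in $\lambda$ and $\ve$ near zero. Using $\mu=a_0+(\lambda^2-\ve)/(2a_0)+O((\lambda^2-\ve)^2)$, the vanishing $J_{\ell-1}(\rho a_0)=0$, and the recurrence \eqref{eq:recur} (which gives $J'_{\ell-1}(\rho a_0)=-J_\ell(\rho a_0)$), I expect
\[
\mu J_{\ell-1}(\rho\mu) = -\tfrac{\rho}{2}J_\ell(\rho a_0)(\lambda^2-\ve) + O((\lambda^2-\ve)^2).
\]
The small-argument expansions \eqref{eq:Yexp} give $H^{(1)}_\ell(\lambda\rho) = -i(\ell-1)!\,2^\ell/(\pi(\lambda\rho)^\ell) + O(|\lambda|^{2-\ell})$ and an analogous expansion of $H^{(1)}_{\ell-1}(\lambda\rho)$ whose subleading error carries a $\log\lambda$ precisely when $\ell=2$ (arising from $Y_1$). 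Substituting these into \eqref{eq:Ql-1} and simplifying should yield, uniformly for $|\arg\lambda|\le\pi/2$,
\[
Q_\ell(\lambda,a_\ve) = \frac{iJ_\ell(\rho a_0)\,2^{\ell-1}(\ell-2)!}{\pi\rho^{\ell-1}\lambda^\ell}\bigl(\ell\lambda^2 - (\ell-1)\ve\bigr) + R_\ell(\lambda,\ve),
\]
with $R_\ell(\lambda,\ve) = O\bigl(\lambda^{4-\ell}(1+\delta_{\ell,2}\log|\lambda|) + \ve\lambda^{2-\ell} + \ve^2\lambda^{-\ell}\bigr)$. Here $J_\ell(\rho a_0)\ne 0$, since consecutive zeros of $J_{\ell-1}$ and $J_\ell$ are disjoint.

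The principal part vanishes precisely at $\tl_\ve := \sqrt{(\ell-1)\ve/\ell}$, with the square root chosen positive for $\ve>0$ and positive imaginary for $\ve<0$. I apply Rouch\'{e}'s theorem to $Q_\ell$ and its principal part on the circle $|\lambda-\tl_\ve|=|\tl_\ve|\,|\ve|^{1/2}$: there the principal part is of order $|\ve|^{(3-\ell)/2}$, while $R_\ell$ is of order $|\ve|^{(4-\ell)/2}(1+\delta_{\ell,2}\log|\ve|)$, strictly smaller for $|\ve|$ small. This yields a unique simple zero $\lambda_\ve=\tl_\ve(1+f_\ve)$ with $|f_\ve|=O(|\ve|^{1/2})$. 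Plugging back into $Q_\ell(\lambda_\ve,a_\ve)=0$ and using that the derivative of the principal part at $\tl_\ve$ is of order $\tl_\ve^{1-\ell}$, I bootstrap to $f_\ve=O(\ve(1+\delta_{\ell,2}\log|\ve|))$, which gives $\lambda_\ve = \sqrt{(\ell-1)\ve/\ell}+O(\ve^{3/2}(1+\delta_{\ell,2}\log|\ve|))$. Squaring then gives the stated asymptotic for $E_\ve=\lambda_\ve^2$ when $\ve<0$, while for $\ve>0$, $\tl_\ve$ is real and positive, so Lemma \ref{l:uhprescut} forces the resonance $\lambda_\ve$ to lie in the fourth quadrant.

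The main technical obstacle I anticipate is the $\ell=2$ case, where the $\log$ in $Y_1$ enters $R_\ell$ through $H_1^{(1)}$; the accounting must verify that this contribution enters the final error only with the stated $1+\log|\ve|$ factor and not a higher power of $\log$. For $\ell\ge 3$ the relevant orders of the Hankel expansion are purely polynomial and no logarithms appear in the error.
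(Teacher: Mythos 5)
Your proposal is correct and follows essentially the same route as the paper: reduction to $\ell\ge 2$, the expansion of $Q_\ell$ about $\lambda=0$, $\ve=0$ using $J_{\ell-1}(\rho a_0)=0$, the identical principal part $g$ (your formula matches the paper's \eqref{eq:g1l} after simplification), Rouch\'e on a circle of radius comparable to $|\ve|$ about $\tl_\ve=\sqrt{\ve(\ell-1)/\ell}$, and the same bootstrap to the $O(\ve^{3/2}(1+\delta_{\ell,2}\log|\ve|))$ error. The $\ell=2$ logarithm is handled exactly as in the paper, entering only through the single-$\log$ error term.
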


In the language of the introduction, in this case the eigenvalue
persists. For $\ve>0$ the resonance is near the positive real axis, with negative imaginary part by Lemma \ref{l:uhprescut}. 

\begin{proof}  We give the proof for $\ell\geq 2$, as the result for $-\ell$ follows from that for $\ell$.

By Lemma \ref{l:limit}  we have $J_{\ell -1}(\rho a_0)=0$.  
As in the proof of Lemma \ref{l:0mc} we expand $Q_\ell$ for $\lambda$ and $\ve $ near $0$.   By  \eqref{eq:recur} and \eqref{e:bessdef},  $J_{\ell -1}'(\rho a_0)=-J_{\ell}(\rho a_0)$
 and $H^{(1)}_\ell(\rho \lambda)=iY_\ell(\rho \lambda)+O(\lambda^\ell)$.  Using this, Taylor expanding $\mu$ and $J_{\ell -1}(\mu \rho)$, and using 
  the expansions \eqref{eq:Yexp} 
for $Y_{\ell}$, $Y_{\ell -1}$ yields
\begin{equation}\begin{split}\label{eq:Qellexp}
 Q_\ell(\lambda,&a_\ve) = i\left\{ \left( -J_\ell(\rho a_0)a_0\rho\frac{\lambda^2-\ve}{2a_0}+O((\lambda^2-\ve)^2 )\right) \left( -\frac{1}{\pi}\left( \frac{\rho \lambda}{2}\right)^{-\ell} (\ell -1)!
+O(\lambda^{-\ell+2})\right)\right.  \\
& -\left. \lambda \left(J_\ell(\rho a_0)+O(\lambda^2-\ve)\right)\left( -\frac{1}{\pi}\left( \frac{\rho \lambda}{2}\right)^{-\ell +1}(\ell -2)! +O(\lambda^{-\ell+3}(1+\delta_{\ell,2}\log \lambda))\right)\right\}. 
\end{split}\end{equation}
Now set 
\begin{equation}\label{eq:g1l}
g(\lambda, a_0,\ve)= \frac{i\rho}{2\pi}\left( \frac{\rho \lambda}{2}\right)^{-\ell}(\ell-2)! J_{\ell}(\rho a_0)\{ (\lambda^2-\ve)(\ell -1) +\lambda^2\}
\end{equation}
which is chosen so that 
\begin{equation}\label{eq:Qlg}
Q_\ell(\lambda,a_\ve)= g(\lambda, a_0,\ve) + O(\lambda^{-\ell+4}(1+\delta_{2,\ell}\log \lambda))+O(\ve^2\lambda^{-\ell})+ O(\ve \lambda^{-\ell+2}).
\end{equation}
Set $\tilde{\lambda}_\ve= \sqrt{\ve(\ell -1)/\ell}$, with the branch of square root taken such that $\re \tilde{\lambda}_\ve \geq 0$ and
 $\im \tilde{\lambda}_\ve \geq 0$, and note that  $g(\lambda,a_0,\ve)=0$ if and only if $\lambda = \pm \tilde{\lambda}_\ve.$ 
There is a $C_0>0$ so that for $|w|$ small, 
%and $C>0$,
 $|g(\pm \tilde{\lambda}_\ve+ w,a_0,\ve)|\geq   C_0|\tilde{\lambda}_\ve|^{-\ell +1}|w|+O(|w|^2|\tilde{\lambda}_\ve|^{-\ell})$. Since \eqref{eq:Qellexp} implies that $|\tilde{\lambda}_\ve|\propto |\ve|^{1/2}$, 
 %by taking $C$ sufficiently large 
 we can ensure that for
$|\ve|$ sufficiently small 
$$|Q_\ell(\tilde{\lambda}_\ve+ \ve z,a_\ve)- g( \tilde{\lambda}_\ve+ \ve z)|<|g(\tilde{\lambda}_\ve+ \ve z, a_0,\ve)|\; \text{for $|z|=1$}.$$
Hence  an application of Rouch\'{e}'s Theorem  on the circle centered at $\tilde{\lambda}_\ve$ with radius $|\ve|$ shows that there is an $\ve_0>0$
so that for 
$0<|\ve|<\ve_0$, $Q_\ell(\lambda,a_\ve)$ has a single
simple zero  $\lambda_\ve$ within $|\ve|$ of $\tilde{\lambda}_\ve$.   Note that for $\ve \not =0$ small enough the disk enclosed by this circle stays away from $\lambda=0$.  This zero of 
$Q_\ell$ at $\lambda_\ve$ is a resonance of 
$-\Delta +V_\ve$. 

The error can be improved  using  \eqref{eq:g1l} and \eqref{eq:Qlg} as we describe.  Write $\lambda_\ve =\tilde{\lambda}_\ve+f_\ve$.  Then
$$0=\lambda^{\ell}_\ve Q_\ell(\lambda_\ve, a_\ve)= \frac{i\rho}{\pi}\left( \frac{\rho}{2}\right)^{-\ell}(\ell-2)! J_{\ell}(\rho a_0)\ell \tilde{\lambda}_\ve f_\ve + O(f_\ve^2) + O(\ve^2(1+\delta_{\ell,2}\log \ve))$$
using $|\tilde{\lambda}_\ve|\propto |\ve|^{1/2}$.  Solving for $f_\ve$ and using $f_\ve=O(\ve)$ yields $f_\ve=O(\ve^{3/2} (1+\delta_{2,\ell}\log \ve))$.

For $\ve <0$, $\lambda_\ve\in i(0,\infty)$, and hence $\lambda_\ve^2$ is a negative eigenvalue of $-\Delta-a^2_\ve\bbo_\rho$. 
\end{proof}

%The reader may note that we  ignored the solution $-\tilde{\lambda}_\ve$ of $g(\lambda,a_0,\ve)=0$ in the lemma above.  In Lemma \ref{l:evpert} we shall rectify this omission, and moverover show that for $\ve$ having either sign there are many more 
%resonances related to this family of of negative eigenvalues.

We now turn to a second kind of persistent eigenvalue.  
\begin{lem}\label{l:pwrc}
Suppose  for $-\ve_1<\ve<0$ there exist nontrivial
$\psi_\ve\in L^2(\Real^2)$ ,  $\partial_\theta \psi_\ve(r,\theta)=\pm i\psi_\ve$, 
%$\varphi_\ve(r,\theta)=\tilde{\varphi}(r)e^{i\ell\theta}$, 
with $(-\Delta -a^2_\ve\bbo_\rho -E_\ve)\psi_\ve=0$, and $E_\ve \uparrow 0$ as $\ve \uparrow 0$.  Then there is an $\ve_0>0$ so that for $0<|\ve|<\ve_0$, $-\Delta -a_\ve^2 \bbo_\rho$ has a resonance satisfying 
$$\lambda =\frac{2i}{\rho}\exp\left(\frac{1}{2}-\gamma+ \frac{W_{-1}(\ve\rho^2 e^{-1+2\gamma}/4)}{2}\right)+o(|\ve|^{3/2}).$$
%$$\lambda =(i/\rho)\exp(1/2-\gamma+ W_{-1}(\ve\rho^2 %e^{-1+2\gamma}/4)/2)+o(|\ve|^{3/2}).$$
Moreover, as  $\ve \uparrow 0$ 
$$E_\ve = -\frac{4}{\rho^2}\exp\left( 1-2\gamma +W_{-1}(\ve \rho^2 e^{-1+2\gamma}/4)\right)+o(\ve^2)=-\frac{\ve}{W_{-1}(\ve \rho^2 e^{-1+2\gamma}/4)} + o(\ve^2 ).$$
%$E_\ve = -\ve/W_{-1}(\ve \rho^2 e^{-1+2\gamma}/4) + o(\ve^2 %)$.
%\footnote{These errors could be improved a bit--the first to $O(\ve |\ve/\log |\ve||^{1/2})$, I think.  }
\end{lem}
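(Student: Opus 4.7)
My plan is to follow the template of Lemmas \ref{l:0mc} and \ref{l:2c}. By Lemma \ref{l:limit} applied to $\ell = \pm 1$, we have $J_0(\rho a_0) = 0$, and by the symmetry $\ell \mapsto -\ell$ it suffices to treat $\ell = 1$. From \eqref{eq:Ql-1},
\[
Q_1(\lambda, a_\ve) = \mu J_0(\rho\mu) H_1^{(1)}(\rho\lambda) - \lambda J_1(\rho\mu) H_0^{(1)}(\rho\lambda).
\]
Using $J_0(\rho a_0) = 0$ and $J_0'(\rho a_0) = -J_1(\rho a_0)$ from \eqref{eq:recur}, Taylor expansion gives $\mu J_0(\rho\mu) = -\tfrac{\rho}{2} J_1(\rho a_0)(\lambda^2 - \ve) + O((\lambda^2-\ve)^2)$ and $J_1(\rho\mu) = J_1(\rho a_0) + O(\lambda^2 - \ve)$. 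Combining these with the small-argument expansions of $H_0^{(1)}$ and $H_1^{(1)}$ from \eqref{e:bessdef} and \eqref{eq:Yexp} isolates the leading piece
\[
g(\lambda, a_0, \ve) := \frac{J_1(\rho a_0)}{\pi\lambda}\Bigl[i(\lambda^2 - \ve) - \pi\lambda^2 - 2i\lambda^2\bigl(\log(\rho\lambda/2) + \gamma\bigr)\Bigr],
\]
with $Q_1 - g$ of strictly lower order uniformly in $|\arg\lambda|\le \pi/2$.

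The novelty compared with the preceding two lemmas is that the equation $g = 0$ mixes $\lambda^2$ with $\lambda^2\log\lambda$, so it is solved not by elementary means but by the Lambert $W$ function of Section \ref{s:lambert}. Setting the bracket in $g$ to zero and making the substitution $w := 2\log\lambda + \log(\rho^2/4) + 2\gamma - 1 - i\pi$ converts the equation to $we^w = \tfrac{\rho^2}{4} e^{-1+2\gamma}\ve$. Lemma \ref{l:lwf} dictates the physically relevant branch: we need $\tl_\ve$ on the positive imaginary axis as $\ve \uparrow 0$ (so that $\tl_\ve^2$ is a negative eigenvalue) and in the fourth quadrant as $\ve \downarrow 0$ (consistent with Lemma \ref{l:uhprescut}), and this forces the $n = -1$ branch. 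Undoing the substitution gives
\[
\tl_\ve = \tfrac{2i}{\rho}\exp\Bigl(\tfrac{1}{2} - \gamma + \tfrac{1}{2} W_{-1}\bigl(\tfrac{\rho^2}{4} e^{-1+2\gamma}\ve\bigr)\Bigr),
\]
the claimed leading term, and the defining relation $we^w = \tfrac{\rho^2}{4} e^{-1+2\gamma}\ve$ also yields $\tl_\ve^2 = -\ve/W_{-1}(\tfrac{\rho^2}{4} e^{-1+2\gamma}\ve)$.

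To locate a true zero $\lambda_\ve$ of $Q_1$ near $\tl_\ve$, I would apply Rouch\'e's theorem to $Q_1$ and $g$ on a circle centered at $\tl_\ve$. Linearizing $we^w$ at $w = W_{-1}(\tfrac{\rho^2}{4}e^{-1+2\gamma}\ve)$ yields the lower bound $|g(\tl_\ve(1+z))| \gtrsim |\ve||z|/|\tl_\ve|$ for small $|z|$, which on a circle of radius $|\tl_\ve|\cdot|\ve|^{1/2}$ dominates $|Q_1 - g|$ by a factor vanishing with $\ve$; this produces a unique simple zero $\lambda_\ve = \tl_\ve + f_\ve$ inside the circle, with crude bound $f_\ve = O(|\tl_\ve||\ve|^{1/2})$. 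A bootstrap substitution into $Q_1(\lambda_\ve, a_\ve) = 0$, using $g(\tl_\ve) = 0$ together with Lemma \ref{l:lwf} (which also gives $|\tl_\ve|^2 \sim |\ve|/|\log\ve|$), sharpens this to $f_\ve = o(|\ve|^{3/2})$, and the two formulas for $E_\ve = \lambda_\ve^2$ follow from the identity $we^w = \tfrac{\rho^2}{4}e^{-1+2\gamma}\ve$.

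The hard part is the $\ve$-uniform control of the Rouch\'e error across the full range $|\arg\lambda|\le \pi/2$: the remainder $Q_1 - g$ mixes algebraic and logarithmic dependence on $\lambda$, and since $|\tl_\ve|$ decays at a rate only slightly faster than $|\ve|^{1/2}$, verifying the Rouch\'e inequality and extracting the sharp $o(|\ve|^{3/2})$ remainder requires careful joint bookkeeping of the competing algebraic and logarithmic contributions.
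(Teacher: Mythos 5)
Your proposal is correct and follows essentially the same route as the paper: the same expansion of $Q_1$ about $J_0(\rho a_0)=0$, an identical leading part $g$ (your bracketed form equals the paper's $\frac{2\lambda}{i\pi}J_1(\rho a_0)(\log(\lambda\rho/2)-\tfrac12+\tfrac{\pi}{2i}+\gamma+\tfrac{\ve}{2\lambda^2})$), the same substitution reducing $g=0$ to $we^w=\tfrac{\rho^2}{4}e^{-1+2\gamma}\ve$ with the $n=-1$ branch selected via Lemma \ref{l:lwf}, and the same Rouch\'e circle of radius $|\ve|^{1/2}|\tl_\ve|$ followed by a bootstrap to $o(|\ve|^{3/2})$.
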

\begin{proof}We give the proof for $\ell=1$, as  the case $\ell =-1$ then follows.
The difference between this case and that of Lemma \ref{l:2c}, with $\ell \geq 2$,
 is that the leading term in the expansion of $H^{(1)}_0(\lambda)=H^{(1)}_{\ell-1}(\lambda)$ behaves like $\log \lambda$ rather than
$\lambda^{-\ell -1}$.  Using $J_0(\rho a_0)=0$ from Lemma 
\ref{l:limit}  gives (compare \eqref{eq:Qellexp}):
\begin{align}\label{eq:Q1}
 Q_1(\lambda,a_\ve) = &i \left( -J_1(\rho a_0)a_0\rho\frac{\lambda^2-\ve}{2a_0}+O((\lambda^2-\ve)^2 )\right) \left( -\frac{1}{\pi}\left( \frac{\rho \lambda}{2}\right)^{-1} 
+O(\lambda \log \lambda) \right) \nonumber  \\
& - \lambda \left(J_1(\rho a_0)+O(\lambda^2-\ve)\right)\left(    1+ \frac{2i}{\pi}\log \left( \frac{\lambda \rho  }{2}\right) +\frac{2i}{\pi}\gamma +O(\lambda^2 \log \lambda)   \right). 
\end{align}
Set
\begin{equation}\label{eq:g1}g(\lambda,a_0,\ve)= \frac{2 \lambda}{i\pi}J_1(\rho a_0)\left(\log \left(\frac{\lambda \rho}{2}\right)-\frac{1}{2}+\frac{\pi}{2i}+\gamma +\frac{\ve}{2\lambda^2}\right)
\end{equation}
so that 
\begin{equation}\label{eq:Q1g}Q_1(\lambda, a_\ve)=g (\lambda,a_0,\ve)+ O(\lambda^3 \log \lambda)+ O(\ve^2 \lambda^{-1}) + O(\ve \lambda \log \lambda).
\end{equation}
Set $b= -\log (\rho/2)+1/2+i\pi/2-\gamma$ and $\tau =\log \lambda -b$.  Then $\lambda$ satisfies $g(\lambda,a_0,\ve)=0$ if and only if 
$2 \tau e^{2\tau}=-\ve e^{-2b}$.  Solutions of this equation are given by 
$$\tau = \frac{1}{2}W_n(-\ve e^{-2b})= \frac{1}{2} W_n\left(\ve \frac{\rho^2}{4} e^{-1+2\gamma}\right).$$
The case $n=0$ is not relevant for us, as $\lim _{\ve \rightarrow 0} W_0(\ve)=0$.
We want $-\pi/2 <\im \log \lambda=\im \tau +\pi/2 \leq \pi/2 $ when $\ve$ is near $0$.  By  Lemma \ref{l:lwf} this holds if and only
if $n=-1$.  Set
\begin{equation}\label{eq:lwwf}
\tilde{\lambda}_\ve =\frac{2i}{\rho} \exp\left( \frac{1}{2}- \gamma + \frac{1}{2} W_{-1}\left(\ve \frac{\rho^2}{4} e^{-1+2\gamma}\right)\right),
\end{equation}
then $g(\tilde{\lambda}_\ve, a_0,\ve)=0$
and $\tilde{\lambda}_\ve\rightarrow 0$ as $\ve \rightarrow 0$ 
by Lemma \ref{l:lwf}.
%Using Lemma \ref{l:lwf} this gives a value of $\lambda$ with argument between $-\pi/2$ and $\pi/2$ (inclusive) if $n=1$.  Denoted $\tilde{\lambda}_\ve$ the value of \eqref{eq:lwwf} with $n=1$.
We use $g'(\tilde{\lambda}_\ve) \propto \log \tilde{\lambda}_\ve $ to apply Rouch\'{e}'s Theorem on the disk with center $\tilde{\lambda}_\ve$ and radius $|\ve|^{1/2}|\tilde{\lambda}_\ve|$ to the pair $Q_1$ and $g$.
This shows that for $|\ve|$ sufficiently small $Q_1(\lambda,a_{\ve})$  has a zero at $\lambda_\ve$
with $|\lambda_\ve-\tilde{\lambda}_\ve|< |\ve|^{1/2}|\tilde{\lambda}_\ve|$.  Using 
\eqref{eq:g1} and \eqref{eq:Q1g} allows us to improve this to an error of size $o(|\ve|^{3/2})$.

When $\ve<0$ $\arg \lambda_\ve =\pi/2$ and $\lambda_\ve^2$ is an eigenvalue.  When $\ve>0$, $-\pi/2<\arg \lambda_\ve<0$ and   $\lambda_\ve$ is near the positive real axis and is a resonance.
\end{proof}
We will see in Lemma \ref{l:pwrg} that for  any $n\in \Integers \setminus \{0\}$, $ \frac{2i}{\rho} \exp\left( \frac{1}{2}- \gamma + \frac{1}{2} W_n\left(\ve \frac{\rho^2}{4} e^{-1+2\gamma}\right)\right)$ 
gives the approximate location of a resonance on $\Lambda$ for sufficiently small $|\ve|$.

\subsection{More general potentials and resonances on the Riemann surface of the logarithm}\label{s:lcr}

In this section we turn to more general Schr\"{o}dinger operators and consider resonances and eigenvalues near $0$ but with the resonances on $\Lambda$ rather than on $\mathbb C \setminus -i[0,\infty)$; this means allowing more general values of $\arg \lambda$ than we did in Section \ref{s:cwr}.

%Let $V\in L^\infty_c(\Real^2;\Complex).$  For $0<\arg \lambda<\pi$, the physical region, define 
%the resolvent $R_V(\lambda)=(-\Delta +V-\lambda^2)^{-1}$.  It is well known that 
%$R_V(\lambda):L^2_c(\Real^2)\rightarrow H^2_{\loc}(\Real^2)$ has a meromorphic continuation to $\Lambda$, the logarithmic cover of the complex plane--see e.g. \cite[Section 2.5]{cdgen}.  If $\lambda\in \Lambda$,
%$\arg \lambda_0 \not \in (0,\pi)$ then $\lambda_0$ is a pole of $R_V(\lambda)$ if and only if $\lambda_0$ is a resonance. \footnote{I haven't made an effort to find a tidy reference for this since in the introduction Kiril put already references to other definitions of resonances which I think should cover this....}  The poles of $R_V(\lambda)$ 
%with argument between $0$ and $\pi$ correspond to the square roots of eigenvalues.

Lemmas \ref{l:evpert} and \ref{l:pwrg} show that if 
$-\Delta+V_0$ has either a $p$-resonance or an eigenvalue
at $0$, then for small $|\ve|\not =0$, $-\Delta+V_\ve$ has many nearby resonances.
Figure \ref{f:moreres} illustrates some resonances for a family of
circular wells $V_\ve(r)=(-j_{0,1}^2+\ve)\bbo_1(r)$.  The  
subset of $\Lambda$ shown is larger than that shown in the illustrations of the introduction.
Here the branch cut is the brown ray in the second quadrant, so
that the string of resonances shown in the second quadrant (for varying values of $\ve$) has
argument between $-4\pi/3 $ and $-\pi$.  The color-coding here indicates the sign of $\ve$.  These resonances correspond to
the $\ell=1$ (or $\ell =-1$) mode and are the ones approximated in Lemma~\ref{l:pwrg}. 

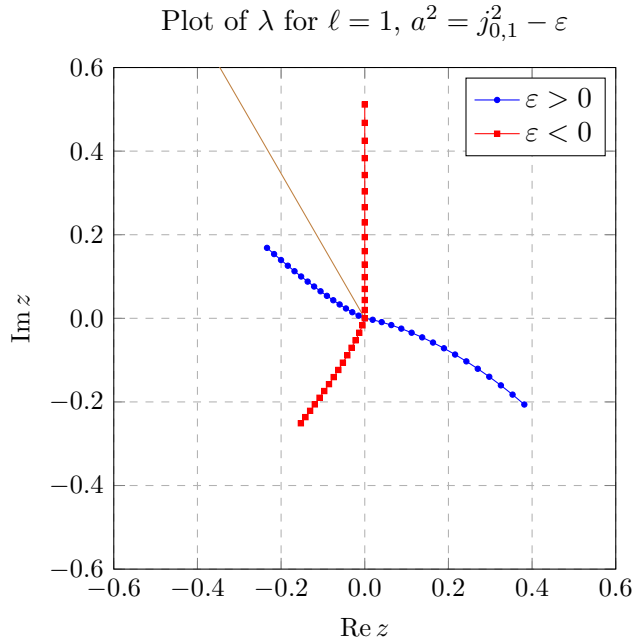
\begin{figure}[ht]
  \centering
  \begin{tikzpicture}
\begin{axis}[
y tick label style={
        /pgf/number format/.cd,
            fixed,
            fixed zerofill,
            precision=1,
        /tikz/.cd
    },
    x tick label style={
        /pgf/number format/.cd,
            fixed,
            fixed zerofill,
            precision=1,
        /tikz/.cd
    },
title={Plot of $\lambda$ for $\ell=1$, $a^2 = j_{0,1}^2 - \varepsilon$},
ylabel=\small{$\operatorname{Im} z$},
xlabel=\small{$\operatorname{Re} z$},
          grid=major, % Display a grid
          grid style={dashed,black!30}, % Set the style
tick label style={font=\small}, 
width=0.5\linewidth, 
height=0.5\linewidth,
xmin = -0.6, xmax = 0.6,
ymin = -0.6, ymax = 0.6,
]
 \draw[color=brown] (0,0) -- (-0.57735,1);
\addplot+[mark options={scale=0.5}] table [x=RePos, y=ImPos, col sep=comma] {dataset1split.csv};
\addlegendentry{$\varepsilon>0$}
\addplot+[mark options={scale=0.5}] table [x=ReNeg, y=ImNeg, col sep=comma] {dataset1split.csv};
\addlegendentry{$\varepsilon<0$}
\end{axis}
\end{tikzpicture}
\caption{Plot of $\lambda_\ve$ in a sheet of $\Lambda$ cut along the brown ray, with arguments ranging from $-4\pi/3$ to $2\pi/3$, values of $\varepsilon$ as in Figure \ref{f:3foldres}.}\label{f:moreres}
\end{figure}

Recall that on $\Lambda$ we do not identify points with arguments that differ by a  multiple of $2\pi$.

In this section we show that by using results about the 
structure of  $\Rvz(\lambda)$ near $\lambda=0$ we can
obtain similar, but more general results, to those obtained in Section \ref{s:cwr}.  In addition to allowing more general potentials $V_\ve$, we  also show that in the case of the persistent eigenvalues there are, for small 
$|\ve|$, many associated resonances on $\Lambda$.  For simplicity we confine ourselves to the case of 
radial potentials.  However, at the 
end of this section we indicate how our proofs work in 
more generality, provided the singularity of the resolvent
$R_{V_0}$ at $\lambda =0$ has rank one.

We make the following assumptions throughout this section.
\begin{as}\label{a:Vve}
There is an $\ve_*>0$ such that 
$$(-\ve_*,\ve_*)\ni \ve \mapsto V_\ve\in L^\infty_c(\Real^2;\Real)$$ is $C^\infty$ in $\ve$, and $V_\ve$
is a radial potential  for each $\ve$.
\end{as}
These potentials are more general than those of 
Theorem \ref{t:lt}, since there is no assumption on the sign of 
$V_0$.  Our replacement is that we shall need a certain integral to be nonzero.
Denote  
\[\partial_\ve V_\ve= \dot{V}_\ve, \qquad  \dot{V}_0=\dot{V}_\ve\upharpoonright_{\ve=0}, \qquad \ddot{V}_0=\partial_\ve^2V_\ve\upharpoonright_{\ve=0},\]
and note that in the circular well example considered above in \eqref{eq:cwep} we have $\dot V_0 (x) = \bbo_\rho(|x|)$ and $\ddot V_0 = 0$.

It is easy to see  using \eqref{eq:rid}
that if $\| \chi R_{V_0}(i\kappa)\chi\|$
is bounded as $\kappa \downarrow 0$ for all $\chi \in C_c^\infty(\Real^2)$, then there are $\ve_0,\delta_0>0$ so
that for $|\ve|<\ve_0$, $-\Delta+V_\ve$ has no negative eigenvalues with norm less than $\delta_0^2.$ 
It is well known (see e.g. \cite[Theorem 1]{cdgen} or \cite{bgd}) that if 
there is a $\chi \in C_c^\infty(\Real^2)$ so that 
$\lim_{|\kappa \downarrow 0}\| \chi R_{V_0}(i\kappa)\chi\| =\infty$
then $0$ must be an eigenvalue, an $s$-resonance, or
a $p$-resonance of $-\Delta +V_0$.  
Hence our hypotheses for each lemma will involve the
existence of a certain kind of zero resonance or a zero eigenvalue of $-\Delta +V_0$; compare Lemma \ref{l:resinlimit}.  A consequence  of each of Lemmas \ref{l:srs}, \ref{l:evpert} and \ref{l:pwrg}
is
that then $-\Delta+V_\ve$ has negative eigenvalue approaching $0$ as $\ve$ goes to $0$ from one direction.
%It is easy to see that if $\| \chi R_{V_0}(i\kappa)\chi\|$
%is bounded as $\kappa \downarrow 0$ for all $\chi \in %C_c^\infty(\Real^2)$, then there are $\ve_0,\delta_0>0$ so
%that for $|\ve|<\ve_0$ $-\Delta+V_\ve$ has no negative eigenvalues %with norm less than $\delta_0^2.$  
These
observations together with Lemmas \ref{l:evpert} and \ref{l:pwrg}  prove Theorem
\ref{t:lt}.  In fact, they prove much more: for $\ve\not =0$ and $|\ve|$ sufficiently small  there are many resonances related to the negative eigenvalue.  Combining this with Lemma \ref{l:srs} proves Theorem \ref{t:persist}. 

Choose $\chi \in C_c^\infty(\Real^2;[0,1])$ radial and such that $\chi V=V$, and note that
$$(-\Delta+V_\ve -\lambda^2)\Rvz(\lambda)\chi = \chi(I+(V_\ve-V_0)\Rvz(\lambda)\chi).$$
Hence for $\im \lambda>0$
\begin{equation}\label{eq:rid}
\Rve(\lambda)\chi = \Rvz(\lambda)\chi (I+(V_\ve-V_0)\Rvz(\lambda)\chi)^{-1}
\end{equation}
and the identity extends to $\Lambda$ by meromorphic continuation. As in Section \ref{s:resprelim}, we have the following version of the Birman--Schwinger principle: $\Rve$ has a pole at $\lambda$, a regular point
of $\Rvz$, if and only if $-1$ is an eigenvalue of $(V_\ve-V_0)\Rvz(\lambda)\chi$.

For $\ell \in \Integers $ define $\proj_\ell:L^2(\Real^2)\rightarrow L^2(\Real^2)$ by 
$$(\proj_\ell f)(r,\theta)=\frac{1}{2\pi} \int_{0}^{2\pi}f(r,\theta')e^{-i\ell \theta'}d\theta' e^{i\ell \theta}.$$
Since $V_\ve$ is radial, \eqref{eq:rid} yields another Birman--Schwinger type principle: $\Rve \proj_\ell$ has a pole at $\lambda$, a regular point of 
$\Rvz\proj_\ell$, if and only if $-1$ is an eigenvalue of 
$(V_\ve-V_0)\Rvz(\lambda)\proj_\ell\chi$. We will use expansions of $\Rvz(\lambda)$ from \cite{cdgen} to iterate this argument and replace $(V_\ve-V_0)\Rvz(\lambda)\proj_\ell\chi$ by  a suitable rank one operator $A(\lambda,\varepsilon)$, and study the zeroes of $\det(I+A(\lambda,\varepsilon)$).
%But $-1$ is an eigenvalue of 
%$(V_\ve-V_0)\Rvz(\lambda)\proj_\ell\chi$ if and only 
%if $\det (I+(V_\ve-V_0)\Rvz(\lambda)\proj_\ell\chi)=0,$  so we can reduce the problem of locating the resonances of 
%$-\Delta+V_{\ve}$ to finding the zeros of a scalar-valued function.

Our first result in this setting parallels Lemma \ref{l:0mc}.  
\begin{lem}\label{l:srs}  Let $V_\ve$ satisfy Assumption \ref{a:Vve}.
Suppose $-\Delta +V_0$ has an $s$-resonance with corresponding resonant state $\psi(r,\theta)$ satisfying $\psi(r,\theta)=1/\sqrt{2\pi}$ for $r$ sufficiently large.
%\footnote{The normalization chosen for $\Psi$ here makes sense if you compare to the eigenfunction normalization and think about integrating 
%$|\Psi|^2$ over $|x|=r$; I'll write the next lemma using a similar sort of normalization.}  
Suppose $\alpha_0=\int_{\Real^2} \dot{V}_0(r)|\psi(r)|^2 dx \not =0$.
There is an $\ve_0>0$ such that if $0<-\alpha_0\ve<|\alpha_0|\ve_0$, then $-\Delta +V_\ve$ has an eigenvalue at $- e^{2/(\ve \alpha_0)- 2\alpha_1/\alpha_0^2}(1+O(\ve))$
for some $\alpha_1\in \Real$, and the corresponding eigenfunction is rotationally symmetric.  Moreover, given $\varphi>0$, there are $\ve_0'(\varphi)>0$, $\lambda_0>0 $ such
that if  $0<\alpha_0\ve<|\alpha_0|\ve_0'$, then for
any $\tilde{\chi} \in C_c^\infty(\Real^2)$, $\tilde{\chi} \Rve(\lambda)\proj_0\tilde{\chi} $ is bounded for $|\arg \lambda|<\varphi$, $|\lambda|<\lambda_0$.
\end{lem}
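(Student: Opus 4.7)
The plan is to carry out the argument of Lemma \ref{l:0mc} in the operator framework, combining the Birman--Schwinger identity \eqref{eq:rid} restricted to $\proj_0$ with the low-energy expansion of $\Rvz(\lambda)\proj_0$ from \cite{cdgen}. Because $P_0$ has an $s$-resonance with resonant state $\psi$ normalized by $\psi \equiv 1/\sqrt{2\pi}$ at infinity, that expansion takes the schematic form
\begin{equation*}
\Rvz(\lambda)\proj_0 \chi = \frac{1}{f(\lambda)}\,\psi \otimes \chi\psi + T_0(\lambda),
\end{equation*}
on suitable weighted spaces, where $T_0(\lambda)$ is holomorphic at $\lambda = 0$ with bounded-operator values, and $f(\lambda)=c_{\ast}(\log \lambda + b)^{-1} + O(\lambda^2 \log \lambda)$ for a nonzero constant $c_{\ast}$ and a constant $b\in \Complex$ with imaginary part $-\pi/2$ coming from the $H_0^{(1)}$ constants in \eqref{eq:Yexp}. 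That $f(\lambda)\to 0$ as $\lambda\to 0$ is the operator-theoretic manifestation of the $s$-resonance.

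By Birman--Schwinger applied to \eqref{eq:rid}, $\tilde\chi\Rve(\lambda)\proj_0 \tilde\chi$ has a pole exactly where $-1$ is an eigenvalue of $(V_\ve - V_0)\Rvz(\lambda)\proj_0\chi$. Split the latter into the rank-one piece $\frac{1}{f(\lambda)}(V_\ve-V_0)\psi \otimes \chi\psi$ and a remainder $(V_\ve - V_0) T_0(\lambda)\proj_0\chi$ of order $\ve$ in operator norm uniformly for small $|\lambda|$. A Neumann series inverts $I+(V_\ve-V_0)T_0(\lambda)\proj_0\chi$, reducing the pole condition to the scalar equation
\begin{equation*}
f(\lambda) + \bigl\langle (V_\ve-V_0)\bigl(I+(V_\ve-V_0)T_0(\lambda)\proj_0\chi\bigr)^{-1}\psi,\,\chi\psi \bigr\rangle = 0.
\end{equation*}
Expanding the inner product in powers of $\ve$, and using that $\chi\equiv 1$ on the supports of $\dot V_0$ and $\ddot V_0$, this becomes $f(\lambda) = -\ve \alpha_0 - \ve^2 \alpha_1 + O(\ve^3)$ with $\alpha_0 = \int \dot V_0 |\psi|^2\, dx$ by direct computation, and $\alpha_1 \in \Real$ collecting the subleading contributions from $\ddot V_0$ and from $T_0(0)$.

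Inverting $f$ turns this into $\log\lambda + b = -c_{\ast}/(\ve\alpha_0) + c_{\ast}\alpha_1/\alpha_0^2 + O(\ve)$. For the sign choice of $\alpha_0\ve$ that makes $\re\log\lambda\to -\infty$ as $\ve\to 0$, the constant $\im b = -\pi/2$ forces $\arg \lambda \to \pi/2$, so $\lambda$ lies on the positive imaginary axis to leading order; squaring gives $\lambda^2 = -\exp\bigl(2/(\ve\alpha_0)-2\alpha_1/\alpha_0^2\bigr)(1+O(\ve))$, a negative eigenvalue of $P_\ve$ whose rotationally symmetric eigenfunction is recovered by applying $\Rvz(\lambda)$ to the Birman--Schwinger vector. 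This sign condition matches $\alpha_0\ve < 0$ in the statement. In the opposite regime $\alpha_0\ve > 0$, the same computation shows $|\lambda|$ escapes any fixed disk $\{|\lambda|<\lambda_0\}$ as $\ve\to 0$; a Rouch\'e comparison of $f(\lambda)+\ve\alpha_0$ with the full scalar equation on the wedge $\{|\arg\lambda|<\varphi,\,|\lambda|<\lambda_0\}$ gives a uniform lower bound on $\|(I+(V_\ve-V_0)\Rvz(\lambda)\proj_0\chi)^{-1}\|$ there, and through \eqref{eq:rid} this delivers the claimed boundedness of $\tilde\chi \Rve(\lambda)\proj_0 \tilde\chi$.

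The main obstacle is extracting from \cite{cdgen} enough precision about $f$ and $T_0(0)$ to match both the leading exponential rate $2/(\ve\alpha_0)$ and the subleading constant $-2\alpha_1/\alpha_0^2$, while keeping errors small enough to absorb into the $O(\ve)$ relative error in the eigenvalue. Once that expansion is in hand, the remaining ingredients -- Neumann series, rank-one reduction, and scalar Rouch\'e -- transpose directly from the Bessel-function steps used in Lemma \ref{l:0mc}.
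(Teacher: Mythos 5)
Your proposal is correct and follows essentially the same route as the paper: Birman--Schwinger restricted to $\proj_0$, the rank-one $\log\lambda$ singularity of $\chi\Rvz(\lambda)\proj_0\chi$ from \cite{cdgen}, a Neumann-series inversion of the regular part, reduction to a scalar equation in $\log\lambda$, and a Rouch\'e argument (your $1/f(\lambda)$ is just the paper's $-(\log\lambda-i\pi/2)$ factor in disguise, and the regular remainder is only bounded, not holomorphic, at $\lambda=0$, which suffices). The one fine point you assert without justification is $\alpha_1\in\Real$; the paper secures this by subtracting $\psi\otimes\psi(\log\lambda-i\pi/2)$ (rather than $\psi\otimes\psi\log\lambda$) so that the resulting constant operator is self-adjoint, and without some such normalization the claimed modulus $e^{2/(\ve\alpha_0)-2\alpha_1/\alpha_0^2}$ of the eigenvalue would not follow.
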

The proof of the lemma gives an expression for $\alpha_1.$
Likewise, the  proofs of Lemmas \ref{l:evpert} and \ref{l:pwrg} give expressions for the constants $\alpha_1$
appearing in their statements.
\begin{proof}
By the Birman-Schwinger principle discussed above,   $\Rve\proj_0$ has a pole at $\lambda$, a regular point of
$R_{V_0}\proj_0$, if and only if $-1$ is an eigenvalue of 
$(V_\ve-V_0) R_{V_0}(\lambda)\proj_0\chi$. 
%Again we choose $\chi \in C_c^\infty(\Real^2)$ so that $\chi V=V$.   
From the proof of  \cite[Theorem 2]{cdgen},  since $-\Delta+V_0$ has an $s$-resonance there is an operator $B_{00}:L^2_c\rightarrow H^2_{\loc}$ such that 
\begin{equation}\label{eq:res}
\chi R_{V_0}(\lambda)\proj_0 \chi = - \chi \psi \otimes \psi \chi \log \lambda  + \chi B_{00}\proj_0 \chi + O( \lambda^2 (\log \lambda)^3),
\end{equation}
where $u \otimes v$ denotes the operator taking $w$ to $\langle w, v\rangle u$ and $\chi\in C_c^\infty(\Real^2)$ satisfies $\chi V=V$.
Set $\tilde{R}_{V_0}(\lambda)= R_{V_0}(\lambda) +   \psi \otimes \psi  (\log \lambda -i\pi/2).$  The operator
$\tilde{R}_{V_0}(\lambda)\proj_0:L^2_c(\Real^2)\rightarrow L^2_{\loc}(\Real^2)$ is bounded at $\lambda=0$, 
and is chosen to make it easy to see that
the constant $\alpha_1$ below  is real.
  For $\varphi>0$ there are $\lambda_0(\varphi),\;\ve'_0(\varphi)>0$  such that in the region
$|\arg \lambda|<\varphi$,
$|\lambda|<\lambda_0$, $|\ve|<\ve_0'$, $I+(V_\ve-V_0)\tilde{R}_{V_0}(\lambda) \proj_0\chi$ is invertible, yielding
\[
(I +(V_\ve-V_0) R_{V_0}\proj_0\chi)  (I +(V_\ve-V_0)\tilde{R}_{V_0}\proj_0 \chi)^{-1}
=I + A(\lambda,\ve), \qquad \text{where}\]
\[
A(\lambda,\ve) := - (V_\ve-V_0) \psi \otimes \psi \chi (\log \lambda -i\pi/2) (I +(V_\ve-V_0) \tilde{R}_{V_0}\proj_0 \chi)^{-1}.
\]
%\[\det(I +(V_\ve-V_0) R_{V_0}\proj_0\chi) = Q(\lambda, \epsilon) \det  (I +(V_\ve-V_0)\tilde{R}_{V_0}\proj_0 \chi), \quad \text{where}
%\]
%\[
%Q(\lambda, \ve):= \det \left( I- (V_\ve-V_0) \psi \otimes \psi \chi (\log \lambda -i\pi/2) (I +(V_\ve-V_0) \tilde{R}_{V_0}\proj_0 \chi)^{-1} \right).
%\]
Thus, the poles of $R_{V_\ve}(\lambda)\proj_0$ for $\lambda, \; \ve$ as described above equal the zeroes of $Q(\lambda,\ve):= \det(I+A(\lambda,\ve))$.
%For $\lambda, \; \ve$ as described above,  $\det  (I +(V_\ve-V_0)\tilde{R}_{V_0}\proj_0 \chi) \ne 0$, so it suffices to consider the zeros of $Q(\lambda, \ve)$.
%Away from $\lambda=0$, $Q$ is an analytic function in this region.
Using   $\det(I+u \otimes v T )=1+\tr u \otimes v T = 1+  \langle Tu , v \rangle$ gives
\begin{equation}\label{eq:cat}
Q(\lambda,\ve)=1 -\ve(\log \lambda -i\pi/2) (\alpha_0+ \alpha_1 \ve + O(\ve^2)+ O(\ve \lambda^2 (\log \lambda)^3),
\end{equation}
where $\alpha_1= -\langle (-\frac{i \pi }{2}\psi \otimes \psi +B_{00})\dot{V}_0\psi, \dot{V}_0\psi \rangle+ \frac 12\langle \ddot{V}_0\psi,\psi\rangle  $.   Since for $\kappa>0$ and $\chi \in C_c^\infty(\Real^2;\Real)$, 
$ \chi R_{V_0}(i\kappa)\chi$  is self-adjoint we find from the expansion \eqref{eq:res} that 
$\chi\left( -\frac{i\pi}{2}\psi \otimes \psi +B_{00}\right)\chi$
is self-adjoint, implying that $\alpha_1$ is real.

The rest of the proof closely resembles that of Lemma \ref{l:0mc}. Suppose first that $\alpha_0\ve>0$.  Then, since 
 $\re \log \lambda<0$ for $\lambda$ near $0$, it is easy to see from \eqref{eq:cat} that, shrinking $\ve_0'$, $\lambda_0$ if necessary, 
   $Q(\lambda,\ve)$ has no zeros in $ |\arg \lambda|<\varphi$, $|\lambda|<\lambda_0,$  $0<\alpha_0 \ve<|\alpha_0|\ve_0'$, and hence the cutoff resolvent is bounded.  
   
  Now consider
the case  $\alpha_0\ve<0$.  
Define $$g(\lambda, \ve):= 1- \ve(\log \lambda -i\pi/2) (\alpha_0+ \alpha_1 \ve),$$ and note $g(\lambda,\ve)=0$ if and only
if $\lambda=\tl_\ve$, where  $\log \tl_\ve =i\pi/2+\frac{1}{ \ve \alpha_0+\ve^2 \alpha_1}$.  Thus
$|\tl_\ve|=e^{1/\alpha_0\ve} e^{-\alpha_1/\alpha_0^2}(1+O(\ve))$ and $\arg \tl _\ve=\pi/2$. 
%This together with \eqref{eq:cat}
%shows that for $|\alpha_0\ve|$ and $|\lambda|$ sufficiently small all zeros of $Q$  with $|\arg \lambda|<\varphi$ must have argument near $\pi/2$.  We study this zero.
 %     Define $\Gamma=\Gamma(\ve)$ to be the boundary of the region in $\Lambda$ given by $\{ \lambda \in \Lambda: |\arg \lambda -\pi/2|< |\ve|^{1/2},\; |\log |\lambda| -\log |\tl_\ve||<|\ve|^{1/2}\}$.  For  $-c_0 \ve$  sufficiently small $Q$ is analytic in $\lambda$ on the closure of this region, 
%and the expansion \eqref{eq:cat} holds. Since on $\Gamma  $ $|g(\lambda,\ve)|\geq |\alpha_0||\ve|^{3/2}+O(\ve^2)$
%and 
%$|Q(\lambda,\ve)-g(\lambda,\ve)|=O(\ve^2) + O(\ve^{-2} e^{2/(\alpha_0\ve)})=O(\ve^2)$, 
Using \eqref{eq:cat}, we can %as in the proof of Lemma \ref{l:0mc} 
apply Rouch\'{e}'s theorem
on the circle $|\lambda - \tl_\ve|=|\tl_\ve||\ve|^{1/2}$  to see that 
if $0<-\alpha_0\ve$ is sufficiently small then $Q(\lambda,\ve)$ has a zero at a single point $\lambda_\ve$ satisfying $\log \lambda_\ve = \log \tl_\ve+O(\ve^{1/2})$.  This 
zero is a resonance of $-\Delta +V_\ve$.  Using the expansion \eqref{eq:cat} improves the error to $\log \lambda_\ve= \log \tl_\ve +O(\ve^2)$.  
     Since  $\lambda_\ve$ is in the physical space $\{\lambda \in \Lambda \colon 0< \arg \lambda <\pi\}$, $\lambda^2_\ve$  corresponds to
an eigenvalue of $-\Delta +V_\ve$.
\end{proof}
To compare this result with Lemma \ref{l:0mc}, we note that 
if $V_0(r)=-a_0^2\bbo_\rho(r)$, with $J_1(a_0\rho)=0$ and $\dot{V}_0=\bbo_{\rho}$, then, in 
the notation of Lemma \ref{l:srs} 
$\psi(r)=(\sqrt{2\pi} J_0(\rho a_0))^{-1}J_0(ra_0)$ for $0<r<\rho$, and by \cite[(10.22.7)]{dlmf}, $\alpha_0=\rho^2/2.$

The next lemma extends Lemma \ref{l:2c}.  In addition to allowing more general potentials, it shows that on $\Lambda$ there are many resonances near $0$ for both positive and negative values of $\ve$.
\begin{lem} \label{l:evpert}   
Suppose $V_\ve$ satisfies 
Assumption \ref{a:Vve}, $-\Delta +V_0$ has an eigenvalue at $0$, with a corresponding eigenfunction $\psi$, $\partial_\theta \psi =i\ell 
\psi$,  $|\ell| \ge 2$, and $\| \psi\|_{L^2(\Real^2)}=1$.
 Use
the notation 
$$O'_m(\ve,\ell)=\delta_{|\ell|,2}O_m(\ve \log |\ve|)+O_m(|\ve|^{2-\delta})$$ for arbitrary $\delta >0$.
 If $\alpha_0:= \int_{\Real^2}|\psi (r,\theta)|^2 V(r) dx  \not =0$, then there is an $\alpha_1\in \Real$ so that
for each $m\in \Integers$ there is an $\ve_0=\ve_0(m)>0$ such that
\begin{enumerate}
\item  If $0< -\ve \alpha_0 <|\alpha_0|\ve_0$, 
%$m\not =0$ 
then there is a resonance at $\lambda_\ve(m)$  satisfying $$|\lambda_\ve(m)|= \sqrt{|\ve \alpha_0+\alpha_1\ve^2|}+|\ve|^{1/2}O_m'(\ve,\ell)\;\text{ and} \;
\arg \lambda_\ve(m)= \pi /2 +m \pi +O'_m(\ve,\ell).$$
\item  If $0< \ve \alpha_0< |\alpha_0|\ve_0$, then there is a resonance at $\lambda_\ve(m) $ satisfying 
$$|\lambda_\ve(m)|=\sqrt{\ve \alpha_0+\ve^2\alpha_1}+|\ve|^{1/2}O_m'(\ve,\ell)\; \text{and }\; \arg \lambda_\ve(m) = m \pi +O_m'(\ve,\ell).$$
\end{enumerate}
%(3) if $m=0$ and $0<-\alpha_0\ve<|\alpha_0|\ve_0$, then $-\Delta+V_0+\ve V$ has an eigenvalue at $\ve \alpha_0+ \ve^2\alpha_1+O(\ve^{2}(\ve(\log \ve)^3+\delta_{|l|,2}\log |\ve|))$.
When $m=0$ and $0<-\alpha_0\ve<|\alpha_0|\ve_0$, the resonance corresponds to a negative eigenvalue 
of $-\Delta +V_\ve$ at $\ve \alpha_0+ \ve^2\alpha_1+O(\ve^{2}(|\ve|^{1-\delta}+\delta_{|\ell|,2}\log |\ve|))$.
In each case the resonance or eigenvalue corresponds to a pole of $\Rve \proj_\ell$.
\end{lem}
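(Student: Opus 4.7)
The plan is to adapt the argument of Lemma \ref{l:srs}, replacing the logarithmic singularity of $\chi \Rvz(\lambda)\proj_0\chi$ associated to an $s$-resonance with the $\lambda^{-2}$ singularity of $\chi \Rvz(\lambda)\proj_\ell\chi$ associated to a zero eigenvalue in the $\ell$th mode. The key input from \cite{cdgen} is a resolvent expansion of the form
\begin{equation*}
\chi \Rvz(\lambda)\proj_\ell\chi = -\lambda^{-2}\chi\psi\otimes\psi\chi + \chi B_{\ell\ell}\proj_\ell\chi + O\bigl(\lambda^2(1+\delta_{|\ell|,2}|\log\lambda|^k)\bigr),
\end{equation*}
for some operator $B_{\ell\ell}\colon L^2_c\to L^2_{\loc}$ and exponent $k\ge 1$. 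The Kronecker factor is the source of the $\delta_{|\ell|,2}$ contribution to $O'_m(\ve,\ell)$.

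First I would introduce the regularized resolvent $\tilde R_{V_0}(\lambda)\proj_\ell := \Rvz(\lambda)\proj_\ell + \lambda^{-2}\psi\otimes\psi$, which is bounded near $\lambda=0$. As in the proof of Lemma \ref{l:srs}, for $|\lambda|$ and $|\ve|$ sufficiently small, $I+(V_\ve-V_0)\tilde R_{V_0}(\lambda)\proj_\ell\chi$ is invertible, and the Birman--Schwinger principle, followed by the factorization
\begin{equation*}
\bigl(I+(V_\ve-V_0)\Rvz(\lambda)\proj_\ell\chi\bigr)\bigl(I+(V_\ve-V_0)\tilde R_{V_0}(\lambda)\proj_\ell\chi\bigr)^{-1}=I+A(\lambda,\ve),
\end{equation*}
reduces locating the poles of $\Rve(\lambda)\proj_\ell$ to finding zeros of $Q(\lambda,\ve):=\det(I+A(\lambda,\ve))$, where
\begin{equation*}
A(\lambda,\ve):=-\lambda^{-2}(V_\ve-V_0)\psi\otimes\psi\chi\bigl(I+(V_\ve-V_0)\tilde R_{V_0}(\lambda)\proj_\ell\chi\bigr)^{-1}.
\end{equation*}
Since $A$ is rank one, $Q=1+\tr A$, and Taylor-expanding $V_\ve-V_0=\ve\dot V_0+\tfrac{\ve^2}{2}\ddot V_0+O(\ve^3)$ together with the geometric series for the inverse yields
\begin{equation*}
Q(\lambda,\ve) = 1-\lambda^{-2}\bigl(\ve\alpha_0+\ve^2\alpha_1+O(\ve^3)+O(\ve^2\lambda^2(1+\delta_{|\ell|,2}|\log\lambda|^k))\bigr),
\end{equation*}
with $\alpha_0=\langle\dot V_0\psi,\psi\rangle$ and $\alpha_1$ an explicit bilinear expression in $\ddot V_0$, $\dot V_0\psi$, and $B_{\ell\ell}$. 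Evaluating at $\lambda=i\kappa$ and using the self-adjointness of $\chi(\lambda^{-2}\psi\otimes\psi+B_{\ell\ell}\proj_\ell)\chi$, exactly as in Lemma \ref{l:srs}, shows $\alpha_1\in\Real$.

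Next, set $g(\lambda,\ve):=1-\lambda^{-2}(\ve\alpha_0+\ve^2\alpha_1)$. On $\Lambda$ the zeros of $g$ form the full family $\tilde\lambda_\ve(m)$ of square roots of $\ve\alpha_0+\ve^2\alpha_1$, indexed by $m\in\Integers$ via $\arg$, all with $|\tilde\lambda_\ve(m)|\sim|\ve|^{1/2}$: $\arg\tilde\lambda_\ve(m)=m\pi$ when $\ve\alpha_0>0$, and $\arg\tilde\lambda_\ve(m)=\pi/2+m\pi$ when $\ve\alpha_0<0$. For each fixed $m$, I would apply Rouch\'e's Theorem to $Q$ and $g$ on a suitable circle centered at $\tilde\lambda_\ve(m)$ of radius small compared to $|\tilde\lambda_\ve(m)|$. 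Since $g'(\tilde\lambda_\ve(m))\sim|\ve|^{-1/2}$ while $Q-g$ is of order $|\ve|^2(1+\delta_{|\ell|,2}|\log\ve|^k)$ on that circle, one has $|Q-g|<|g|$ once $|\ve|$ is small enough depending on $m$. This produces a unique simple zero $\lambda_\ve(m)$ inside the circle; substituting $\lambda_\ve(m)=\tilde\lambda_\ve(m)+f$ back into $Q=0$ and solving iteratively refines the bound on $f$ to $|\ve|^{1/2}O'_m(\ve,\ell)$.

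Finally, when $m=0$ and $\ve\alpha_0<0$ we have $\arg\lambda_\ve(0)=\pi/2+O'_0(\ve,\ell)$, so $\lambda_\ve(0)$ lies on the positive imaginary axis up to leading order, and $\lambda_\ve(0)^2 = \ve\alpha_0+\ve^2\alpha_1+O\bigl(\ve^2(|\ve|^{1-\delta}+\delta_{|\ell|,2}|\log\ve|)\bigr)$ is a negative eigenvalue of $-\Delta+V_\ve$. The main obstacle is extracting the precise resolvent expansion above from \cite{cdgen} in the zero-eigenvalue case with the correct $\delta_{|\ell|,2}$ dependence in the error; once this is in hand, the Rouch\'e argument closely parallels Lemma \ref{l:srs}, with the caveat that $|\log\lambda|$ grows linearly in $|m|$ on sheets of $\Lambda$ with large argument, which is exactly what forces the threshold $\ve_0$ to depend on $m$.
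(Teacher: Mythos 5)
Your proposal follows essentially the same route as the paper's proof: the Birman--Schwinger reduction with the mode projection $\proj_\ell$, regularization of the $\lambda^{-2}$ singularity via $\tilde R_{V_0}=\Rvz+\lambda^{-2}\psi\otimes\psi$, the rank-one determinant $Q=1+\tr A$ with $\alpha_1$ real by self-adjointness, and Rouch\'e's theorem applied near each square root $\tilde\lambda_\ve(m)$ on $\Lambda$ followed by an iterative refinement of the error. (The paper applies Rouch\'e to the pair $\lambda^2Q$, $\lambda^2g$ to clear the pole at $0$, but your version on circles bounded away from $0$ is equivalent.)

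The one substantive discrepancy is the resolvent expansion you assume as input, precisely at the point you flag as the main obstacle. For $|\ell|=2$ you posit a $\lambda$-independent second term $B_{\ell\ell}$ with remainder $O(\lambda^2(1+|\log\lambda|^k))$, but the expansion actually available from [CD2] in that case is only
\begin{equation*}
\Rvz(\lambda)\proj_\ell=-\frac{1}{\lambda^2}\psi\otimes\psi+O(\log\lambda),
\end{equation*}
i.e.\ the subleading term itself carries a $\log\lambda$ and is not a constant operator. Consequently one must take $\alpha_1=0$ for $|\ell|=2$ and work with $g=1-\ve\alpha_0/\lambda^2$; the relative error in $Q$ is then $O(\ve\log\lambda)=O(\ve\log|\ve|)$ on the relevant circles, and \emph{this} is the source of the $\delta_{|\ell|,2}O_m(\ve\log|\ve|)$ term in $O'_m(\ve,\ell)$ --- your assumed remainder would instead yield an error $O(\ve^2\log^k|\ve|)$ relative to $\alpha_0$, i.e.\ a stronger conclusion than the lemma states, which signals that the input is not correct as written. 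Similarly, for $|\ell|>2$ the remainder in the expansion is $O(|\lambda|^{2-\delta})$ rather than $O(\lambda^2)$, which is exactly where the $O_m(|\ve|^{2-\delta})$ in $O'_m$ comes from. With the correct expansions substituted (and the $m$-dependence of $\ve_0$ coming from uniformity of the expansion only over finite ranges of $\arg\lambda$), your argument goes through as you describe.
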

By including the $\alpha_1$ term we see that the resonances lie quite close to $\arg \lambda =m\pi$ or $\arg \lambda =(m+1/2) \pi $ when $|\ell|>2$; compare the
numerically computed resonances for  $\ell =2$ and $\ell=3$ in Figure \ref{f:3foldres}.
If $|\ell|=2$ we can take $\alpha_1=0$ since the error is larger
than the correction made by including $\alpha_1$.
\begin{proof}
%Set $\Psi(r,\theta)= \psi(r) e^{il\theta}/\sqrt{2\pi}$.   
The proof of this lemma begins in a manner similar to that of Lemma \ref{l:srs}.
We first consider the case $| \ell|> 2$.
From the expansions of the resolvent from \cite[Theorem 1]{cdgen} and using that the potential $V_0$ is rotationally symmetric
we see that if $|\ell|>2$,
\begin{equation}\label{eq:rev}R_{V_0}(\lambda) \proj_\ell = -\frac{1}{\lambda^2} \psi\otimes \psi+ B_{0,0}  \proj_\ell+ O(|\lambda|^{2-\delta})
\end{equation}
where $B_{0,0}: L^2_c(\Real^2)\rightarrow H^2_{\loc }(\Real^2)$ and the error is as an operator $L^2_c(\Real^2)\rightarrow H^2_{\loc }(\Real^2)$.  The error is uniform in a finite range of $\arg \lambda$.   %\footnote{Of course, the remainder is in fact much better here--isn't it $O(\lambda^2 (\log \lambda)^?)$.  The reason I'm writing it like this is to show that if we do have an error only like $O((\log \lambda)^{-1})$, as we would in a more general case without the assumption of 
%radiality but without $0$ as a resonance, it does not cause us problems.   Also maybe we'd like to use $c_1$ to improve asymptotics?}
%Then $(-\Delta +V_0+\epsilon V)R_{V_0}(\lambda)\proj_l = (I+ \epsilon V R_{V_0}(\lambda))\proj_l$, and $\Rep\proj_l$ has a pole at $\lambda$, a regular point of
%$R_{V_0}\proj_l$, if and only if $I+ \epsilon V_0 R_{V_0}\proj_l$ has nontrivial 
%null space at $\lambda$.  
By the Birman-Schwinger principle
$\Rve$ has a pole at $\lambda$, a regular point of $\Rvz$, if and only if $-1$ is an eigenvalue of $(V_\ve-V_0)R_{V_0} (\lambda)\proj_\ell \chi$.
 
Set $\tilde{R}_{V_0}(\lambda)= R_{V_0}(\lambda)+ \lambda^{-2}\psi\otimes \psi$.  For $\arg \lambda$ in a bounded set and $|\lambda|, \; |\ve|$ sufficiently small, 
$\tilde{R}_{V_0}(\lambda)\proj_l$ is analytic away from $\lambda=0$, bounded at $\lambda=0$, and $I+(V_\ve-V_0) \tilde{R}_{V_0}(\lambda)\proj_l\chi$ is 
invertible. 
% so that 
%\begin{multline*}
%\det(I +(V_\ve-V_0) R_{V_0} \proj_l \chi)\\= \det \left( I-(V_\ve-V_0) \frac{1}{\lambda^2}\Psi\otimes \Psi (I +(V_\ve-V_0) \tilde{R}_{V_0} \proj_l \chi)^{-1}\right) \det(I +(V_\ve-V_0) \tilde{R}_{V_0} \proj_l \chi).
%\end{multline*}
%For this range of $\lambda$ and $\ve$, the second factor on the right has no zeros, so it suffices to study the zeros of 
Thus, as in the proof of Lemma \ref{l:srs}, we reduce the problem to that of studying the zeros of
\begin{equation}\label{eq:fep}
Q(\lambda, \ve):= \det \left( I-(V_\ve-V_0) \frac{1}{\lambda^2}\psi\otimes \psi (I +(V_\ve-V_0) \tilde{R}_{V_0} \proj_l \chi)^{-1}\right).
\end{equation}
Away from $\lambda=0$ this is an analytic function of $\lambda$ in this region.
%Recall that for a rank one operator $G$, $\det(I+G)=I+\tr G$.  
Using that we are taking the determinant of the identity plus a rank one operator as in Lemma \ref{l:srs} the expansion \eqref{eq:rev} shows that 
\begin{equation}
\label{eq:fexp}
Q(\lambda,\ve)= 1-\frac{\ve}{\lambda^2} \left( \alpha_0 + \ve \alpha_1+ O(\ve^2)+  O(\ve  |\lambda|^{2-\delta})\right)
%{\color{blue} \frac{c_2\epsilon }{\log \lambda -a}(1+O(\epsilon/\log \lambda))}
%+  O\left(\epsilon \lambda^2 (\log \lambda)^3\right) 
%+ O\left( \epsilon^2\right)\right)
\end{equation}
where $\alpha_1=-\langle B_{00}\dot{V}_0 \psi,  \dot{V}_0\psi \rangle +\frac{1}{2}\langle \ddot{V}_0 \psi,\psi\rangle$.  Because $B_{00}\proj_\ell$ is self-adjoint, $\alpha_1$ is real.
% and {\color{blue}$c_1=  -\frac{1}{2\pi}\left|  \langle V\Psi , u_B\rangle\right|^2$.} \footnote{The term in blue will be absent in the real radial case, I think, and that seems like we could use this to get an improved expansion, using $c_1$ which I think we know to be real here?  Need to dive into Colton's results}
%\footnote{I think in the radial case the $1/(\log \lambda)$ term should be absent, so that the error is better if we take into account $c_1$.  Need to dive into Colton's results.}

Set
$$g(\lambda,\ve)=  1-\frac{\ve}{\lambda^2} \left( \alpha_0 + \ve \alpha_1 \right). $$
Fix $m\in \Integers, $ and let $\tl_\ve(m)\in \Lambda$ be the continuous function of $\ve$ satisfying 
$\tl^2_\ve(m)= \ve \alpha_0+\alpha_1 \ve^2$ (so that 
$g(\tl_\ve,\ve)=0$) and $|\arg \tl_\ve(m)-m\pi- \pi/4|\leq \pi/4$.    
Notice that if $\alpha_0\ve>0$ then $\arg \tl_\ve(m)=m\pi$, and if $\alpha_0\ve<0$ then $\arg \tl_\ve(m)=m\pi+\pi/2$ when $|\ve| $ is sufficiently small.
We identify  $\{\lambda \in \Lambda: |\arg \lambda- \pi(m+1/4)|<\pi/2\}$ with a subset of $\Complex$.  For 
$w\in \Complex,$ $|w|$ small compared to $|\tl_\ve|$,  $|(\tl_\ve+w)^2g(\tl_\ve+w,\ve)|\geq 2 |\tl_\ve w|+O( |w|^2)$.  Using this,
\eqref{eq:fexp}, and Rouch\'{e}'s Theorem
applied to the pair $\lambda^2 Q$ and $\lambda^2 g$
shows that $Q$ has a  simple zero, and hence 
$-\Delta+V_\ve$ has a resonance, at a point $\lambda_{\ve}(m)$ satisfying $|\lambda_\ve (m)-\tl_\ve(m)|=O(\ve)$ when $|\ve|$ is sufficiently small, depending on $m$.
  Using \eqref{eq:fexp} allows us to improve the error to $O_m(|\ve|^{5/2-\delta})$.  
  
  If $0<-\alpha_0 \ve$ is sufficiently small, then $\arg(\lambda_\ve(0))=\pi/2$ and $-\Delta+V_\ve$ has an eigenvalue at
  $(\lambda_\ve(0))^2=\ve(\alpha_0+\alpha_1\ve)+O(|\ve|^{3-\delta}).$
  This completes the proof for $|\ell|>2$.

  If $|\ell |=2$, the expansion \eqref{eq:rev} must be replaced by 
  $$R_{V_0}(\lambda) \proj_\ell = -\frac{1}{\lambda^2} \psi\otimes \psi+ O(\log \lambda). $$
  Again, for $|\lambda|$ and $|\ve|$ sufficiently small the poles of $\Rve$ away from $\lambda=0$ are given
  by the zeros of the function $Q$  defined  in \eqref{eq:fep}, but the expansion
  \eqref{eq:fexp} must be replaced by 
  $$Q(\lambda,\ve)= 1-\frac{\ve}{\lambda^2} \left( \alpha_0 + O(\ve \log \lambda)\right)$$
  and we take $\alpha_1=0$ for simplicity.
  The proof proceeds as in the $|\ell|>2$ case, except that we use 
  $g(\lambda,\ve)= 1-\ve \alpha_0/\lambda^2$, $\tl_\ve(m)^2=\ve \alpha_0$,   and get a  worse error.
\end{proof}
Suppose $\ell\geq 2$ and $V_\ve = (-a_0^2+\ve)\bbo_\rho(r)$, with $J_{\ell-1}(a_0\rho)=0$ so that 
$-\Delta+V_0$ has an eigenvalue at $0$. Then, in the notation of Lemma \ref{l:evpert}, using \cite[10.22.38]{dlmf}
$$\psi_l(r,\theta)=
\begin{cases}
\sqrt{\frac{\ell-1}{\pi \ell}}\frac{1}{\rho J_\ell(a_0 \rho)}J_\ell(a_0r) e^{i\ell \theta},& 0<r<\rho\\
\sqrt{\frac{\ell-1}{\pi \ell}}\frac{1}{\rho}(r/\rho)^{-\ell} e^{i\ell \theta},& r\geq \rho
    \end{cases}
$$
and $\alpha_0= (\ell-1)/\ell.$

The next lemma extends Lemma \ref{l:pwrc}, and shows
that not only does a family of negative eigenvalues in the $\ell=\pm 1$ mode persist, but that it corresponds to many resonances for both
signs of $\ve$.  Since $\log \lambda$ is a natural coordinate
to use on $\Lambda$ and because $\log \lambda$ naturally arises
in the proof of Lemma \ref{l:pwrg}, we use $\log \lambda$ in the statement of the lemma.
\begin{lem}\label{l:pwrg} Let $V_\ve$ satisfy assumption \ref{a:Vve}.
Suppose $-\Delta +V_0$ has a $p$ wave resonance with corresponding resonant states $\psi_{\pm}$
satisfying 
 $\psi_\pm (r,\theta)=e^{\pm i \theta}/(r\sqrt{2\pi})$ for $r$ sufficiently large, and define $\alpha_0:=\int_{\Real^2} \dot V_0(r)|\psi(r,\theta)|^2 dx$ and 
$$b= \log 2-\gamma +i\pi/2 +\lim_{\rho' \rightarrow \infty} \left(\int_{|x|<\rho'}|\psi_{\pm}|^2 dx -\log \rho'\right) .$$  If $\alpha_0\not =0$ then for each $n\in \Integers \setminus \{0\}$ there is an $\ve_0=\ve_0(n)>0$ so that if $|\ve|<\ve_0$, $-\Delta +V_\ve$ has a resonance at a point 
$\lambda_\ve(n)$ satisfying 
$$\log \lambda_\ve(n) = b+\frac{1}{2}W_n(2\ve (\alpha_0+ \alpha_1\ve) e^{-2\re b}) +o(|\ve|).$$ When  $\ve \alpha_0<0$, the $n=-1$ resonance corresponds to $-\Delta+V_\ve$ having a negative eigenvalue given by 
$$ (\lambda_\ve(-1))^2= e^{2b} \exp\left( W_{-1}(2\ve (\alpha_0+\alpha_1 \ve) e^{-2\re b})\right)(1+o(\ve))=-\frac{2\ve(\alpha_0+\alpha_1\ve)(1+o(\ve))}{W_{-1}( 2\ve (\alpha_0+ \alpha_1\ve) e^{-2\re b})}. $$
In each case, $R_{V_\ve}(\lambda)\proj_{\pm 1}$ has a pole
at $\lambda=\lambda_\ve(n)$.
\end{lem}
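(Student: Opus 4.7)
The plan is to follow the template established by Lemmas \ref{l:srs} and \ref{l:evpert}: use the Birman--Schwinger principle on the $\ell = \pm 1$ angular mode, subtract off the singular part of $R_{V_0}(\lambda)$ near $\lambda = 0$ to reduce to a rank-one determinant equation $Q(\lambda,\ve)=0$, locate zeros of a tractable leading-order model $g(\lambda,\ve)$ using the Lambert $W$ function, and transfer them to zeros of $Q$ by Rouch\'e's theorem.

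First I would invoke the $p$-resonance resolvent expansion from \cite{cdgen}, which in this setting should take the schematic form
\[
\chi R_{V_0}(\lambda)\proj_{\pm 1}\chi = h(\lambda)\,\chi\psi_\pm\otimes\psi_\pm\chi + \chi B_{00}\proj_{\pm 1}\chi + o(1),
\]
where $h(\lambda)$ mixes a $\lambda^{-2}$ singularity with a $\log\lambda$ correction, and $B_{00}\colon L^2_c\to H^2_\loc$. The $\log\lambda$ enters because $\psi_\pm\sim e^{\pm i\theta}/(r\sqrt{2\pi})$ is only logarithmically outside $L^2$, and the finite part of this logarithm is encoded in $\lim_{\rho'\to\infty}\bigl(\int_{|x|<\rho'}|\psi_\pm|^2\,dx-\log\rho'\bigr)$, precisely the extra term in the definition of $b$. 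Subtracting the singular part gives $\tilde R_{V_0}(\lambda):=R_{V_0}(\lambda)-h(\lambda)\psi_\pm\otimes\psi_\pm$, bounded at $\lambda=0$, and for $|\ve|,|\lambda|$ small in a bounded range of $\arg\lambda$, $I+(V_\ve-V_0)\tilde R_{V_0}\proj_{\pm 1}\chi$ is invertible. Factorizing as in Lemma \ref{l:srs} gives
\[
(I+(V_\ve-V_0)R_{V_0}\proj_{\pm 1}\chi)(I+(V_\ve-V_0)\tilde R_{V_0}\proj_{\pm 1}\chi)^{-1}=I+A(\lambda,\ve),
\]
with $A(\lambda,\ve)$ rank one, so by Birman--Schwinger the resonances of $R_{V_\ve}\proj_{\pm 1}$ on $\Lambda$ near $0$ are the zeros of $Q(\lambda,\ve):=\det(I+A(\lambda,\ve))$. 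Computing via $\det(I+u\otimes v)=1+\langle u,v\rangle$ yields $Q(\lambda,\ve)=1+\ve h(\lambda)(\alpha_0+\alpha_1\ve)+o(\ve)$, where $\alpha_1=-\langle B_{00}\dot V_0\psi_\pm,\dot V_0\psi_\pm\rangle+\tfrac12\langle\ddot V_0\psi_\pm,\psi_\pm\rangle$ is real by self-adjointness of $\chi R_{V_0}(i\kappa)\chi$ for $\kappa>0$.

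Setting $g(\lambda,\ve):=1+\ve h(\lambda)(\alpha_0+\alpha_1\ve)$ and substituting $\tau=\log\lambda-b$, the equation $g=0$ should reduce to $2\tau e^{2\tau}=2\ve(\alpha_0+\alpha_1\ve)e^{-2\re b}$, with solutions $\tau=\tfrac12 W_n(2\ve(\alpha_0+\alpha_1\ve)e^{-2\re b})$ for $n\in\Integers$. For $n\neq 0$, Lemma \ref{l:lwf} shows that $\tl_\ve(n):=\exp(b+\tfrac12 W_n(\cdots))$ tends to $0$ as $\ve\to 0$. Applying Rouch\'e's theorem to $(Q,g)$ on a disk of radius $|\ve|^{1/2}|\tl_\ve(n)|$ around $\tl_\ve(n)$, as in Lemma \ref{l:pwrc}, produces a unique simple zero $\lambda_\ve(n)$ of $Q$ with $\log\lambda_\ve(n)=b+\tfrac12 W_n(\cdots)+o(|\ve|)$; this is a resonance of $-\Delta+V_\ve$. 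For the eigenvalue claim at $n=-1$ when $\ve\alpha_0<0$: the argument of $W_{-1}$ is real and approaches $0$ from below, so Lemma \ref{l:lwf} gives $\im W_{-1}(\cdots)\to 0$, hence $\im\log\lambda_\ve(-1)=\pi/2+o(1)\in(0,\pi)$, placing $\lambda_\ve(-1)$ on the positive imaginary axis up to $o(1)$ and making $\lambda_\ve(-1)^2$ a negative eigenvalue. The closed-form expression $(\lambda_\ve(-1))^2=-2\ve(\alpha_0+\alpha_1\ve)(1+o(\ve))/W_{-1}(\cdots)$ then follows from $e^{W_{-1}(z)}=z/W_{-1}(z)$ together with $e^{2b-2\re b}=e^{i\pi}=-1$.

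The main obstacle is extracting the precise form of $h(\lambda)$ from the $p$-resonance expansion in \cite{cdgen}. Unlike the pure $\log\lambda$ case of Lemma \ref{l:srs} or the pure $\lambda^{-2}$ pole of Lemma \ref{l:evpert}, the $p$-resonance case genuinely mixes these two scales, and identifying the coefficient of the logarithm in terms of the renormalized-norm constant appearing in $b$ is the delicate step that pins down the constant $b$ in the final formula. Once $h(\lambda)$ is in hand, the Rouch\'e argument and the extraction of $\alpha_1$ proceed by the same template as the earlier lemmas, with additional bookkeeping on $\Lambda$ to handle all $n\in\Integers\setminus\{0\}$ at once.
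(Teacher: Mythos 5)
Your proposal is correct and follows essentially the same route as the paper: Birman--Schwinger on the $\ell=\pm1$ mode, subtraction of the rank-one singular part of $R_{V_0}$, reduction to $Q(\lambda,\ve)=\det(I+A)$, the substitution $\tau=\log\lambda-b$ leading to $2\tau e^{2\tau}=2\ve(\alpha_0+\alpha_1\ve)e^{-2\re b}$, and Rouch\'e to transfer the Lambert-$W$ zeros of $g$ to $Q$. The one step you flag as the ``main obstacle''---the precise form of the singular term---is resolved in the paper simply by quoting \cite[Theorem 1]{cdgen}, which gives the leading singularity as $\frac{1}{(\log\lambda-b)\lambda^2}\,\psi\otimes\psi$ with exactly the constant $b$ in the statement; your reduced transcendental equation is consistent with that form, so no further work is needed.
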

We note that, by Lemma \ref{l:lwf}, $\lim_{\alpha_0\ve\uparrow 0} \im \log \lambda_\ve(n)= (\pi/2)(2+2n-\sgn n)$ and 
$\lim_{\alpha_0\ve\downarrow 0} \im \log \lambda_\ve(n)= (\pi/2)(1+2n-\sgn n)$.
\begin{proof}
We give the proof for the $\ell =1$ mode; the 
case $\ell =-1$ follows.  We set $\psi =\psi_+$.

From \cite[Theorem 1]{cdgen},  there is 
%a $b\in \Complex$ with $\im b=\pi/2$ and 
an  operator $B_{00}:L^2_c\rightarrow H^2_{\loc}$ such that near $\lambda=0$
\begin{equation}\label{eq:artie}
\chi R_{V_{0}}(\lambda)\proj_{1} \chi = \chi \left( \frac{1}{(\log \lambda -b)\lambda^2}  \psi \otimes  \psi  + B_{00}\proj_{1} \right)\chi + O((\log \lambda)^{-1}).
\end{equation}

We set $\tilde{R}_{V_0}(\lambda)= R_{V_0}(\lambda)- \frac{1}{\log \lambda -b}  \psi \otimes  \psi $.
As in the proofs of Lemmas \ref{l:evpert} and \ref{l:srs}, to study the resonances with $|\arg \lambda|<\varphi$ and $0<|\lambda|$, 
for $|\lambda|$ , $|\varepsilon|$ sufficiently small, it suffices to study the zeros of 
\begin{equation}
Q(\lambda,\ve):= \det\left(I+\frac{1}{(\log \lambda-b)\lambda^2}(V_\ve-V_0) \psi \otimes \psi (I+(V_\ve-V_0) \tilde{R}_{V_0}(\lambda)\proj_1\chi)^{-1}\right).
\end{equation}
Using \eqref{eq:artie} yields 
%\footnote{Check again the constant in front of $\P\otimes U$, though now I think I see why it doesn't have a factor of $1/2$.}
\begin{equation}\label{eq:Qprs}
Q(\lambda,\varepsilon)= 1 + \frac{\varepsilon}{(\log \lambda -b)\lambda^2}\left( \alpha_0 +\alpha_1\ve +O(\varepsilon (\log \lambda)^{-1})+O(\ve^2)\right)
\end{equation}
where $\alpha_1=- \langle B_{00}\dot{V}_0\psi,\dot{V}_0\psi \rangle+\frac{1}{2}\langle \ddot{V}_0\psi,\psi\rangle\in \Real$.
Now set
\begin{equation}\label{eq:gprs}
g(\lambda,\ve)=  1 + \frac{\ve}{(\log \lambda -b)\lambda^2}(\alpha_0+ \alpha_1 \ve) .
\end{equation}
Solving $g(\lambda,\ve)=0$ as in Lemma \ref{l:pwrc} leads for $n\in \Integers\setminus \{ 0\}$ to solutions $\tl_\ve(n)$ satisfying 
 $\log \tl_\ve(n) =  b+\frac{1}{2}W_n(2\epsilon(\alpha_0+ \alpha_1 \epsilon) e^{-2\re b})$.

We apply Rouch\'{e}'s Theorem to the pair $g$, $Q$ for the curve $|\tl_\ve(n)-\lambda|=|\ve|^{1/4}|\tl_\ve(n)|$.  This shows that for each $n\in \Integers \setminus \{0\}$ there
is an $\ve_0(n)$ so that there is a zero $\lambda_\ve(n)$ of $Q(\lambda,\ve)$ within $|\ve|^{1/4}|\tl_\ve(n)|$ of $\tl_\ve(n)$ when $|\ve|<\ve_0(n)$.  Using \eqref{eq:Qprs} and \eqref{eq:gprs} shows 
that we can improve the error to 
$\log \lambda_\ve(n) -\log \tl_\ve(n)=o(\ve)$, or 
$\lambda_\ve(n)-\tl_\ve(n)= o(|\ve|^{3/2}).$

%Similarly to the  proof of Lemma \ref{l:srs}, we apply Rouch\'{e}'s Thoerem to the disk with center $\tl_\ve(n)$ and radius $|\ve|^{1/2}|\tl_{\ve}(n)|$ to find that for $|\ve|$ sufficiently
%small, depending on $n$, $Q$ has a zero within $|\ve|^{1/2}|\tl_\ve(n)|$ of $\tl_\ve(n)$.  Using \eqref{eq:Qprs} and \eqref{eq:gprs} allows the error to be improved to $o(|\ve|^{3/2})$.
%For each $n\in \Integers \setminus \{0\}$ and $|\epsilon|$ sufficiently small, the mapping $\lambda \mapsto (\log \lambda-b)\lambda^2$ is one-to-one in some neighborhood of $\log \lambda = b+\frac{1}{2}W_n(2\epsilon (c_0+ c_1\epsilon) e^{-2\re b})$.  Thus we may locally use $(\log \lambda-b)\lambda^2$ as a coordinate.  
  If $n=-1$ and $\epsilon \alpha_0<0$, then Lemma \ref{l:lwf} shows that the resulting pole
of $R_\epsilon \proj_1$ corresponds to an eigenvalue, since the pole lies in the physical space.
\end{proof}

Above  we have restricted our considerations to the 
radially symmetric case in an effort to make the paper
as readable as possible.  However, radial symmetry really is not essential to the proofs.
What is essential is 
the ability to reduce the problem to the study of a family of 
operators of $\lambda $ and $\ve$ with a singularity of 
rank $1$ at $\lambda =0$ when $\ve=0$.  For the radial
case we did this by projecting $I+(V_\ve-V_0)R_{V_0}(\lambda)$ onto a Fourier mode.
If the operator $R_{V_0}(\lambda)$ has a singularity of rank $1$ 
at $\lambda =0$, there is no need for a projection, and so no need
for an assumption of radial symmetry.
As an 
example, we include the following lemma which
is an analog of Lemma \ref{l:evpert} without the assumption of radial symmetry.  Analogs of Lemmas \ref{l:srs} and \ref{l:pwrg} are equally possible, though in general 
one should expect the errors to be worse than in those 
lemmas.
\begin{lem}Suppose $V_\ve$ is a family of compactly
supported real-valued functions in $L^\infty(\Real^2)$ which depend in a $C^\infty$ way on $\ve$.  Suppose $0$ is 
a simple eigenvalue of $-\Delta +V_0$ and $-\Delta+V_0$ has neither
an s-wave resonance nor a p-wave resonance, and suppose
$\psi\in L^2(\Real^2)$ satisfies $(-\Delta +V_0)\psi=0$, $\|\psi\|=1$.  
If $\alpha_0:= \int_{\Real^2}|\psi (x)|^2 \dot{V}_0(x) dx  \not =0$, then 
for each $m\in \Integers$ there is an $\ve_0=\ve_0(m)>0$ so that\\
(1)  if $0< -\ve \alpha_0 <|\alpha_0|\ve_0$, 
%$m\not =0$ 
then there is a resonance at $\lambda_\ve(m)$,  satisfying 
$$|\lambda_\ve(m)|= \sqrt{|\ve \alpha_0|}+O_m(|\ve|^{3/2}\log |\ve|)) \; \text{and}\; 
\arg \lambda_\ve(m)= \pi /2 +m \pi +O_m(|\ve|\log |\ve|).$$\\
(2) if $0< \ve c_0< |c_0|\ve_0$ there is a resonance at $\lambda_\ve(m) $ satisfying 
$$|\lambda_\ve(m)|=\sqrt{\ve \alpha_0}+O_m(|\ve|^{3/2}\log |\ve|)\; \text{and}\; \arg \lambda_\ve(m) = m \pi +O_m(|\ve|\log |\ve|)).$$
%(3) if $m=0$ and $0<-\alpha_0\ve<|\alpha_0|\ve_0$, then $-\Delta+V_0+\ve V$ has an eigenvalue at $\ve \alpha_0+ \ve^2\alpha_1+O(\ve^{2}(\ve(\log \ve)^3+\delta_{|l|,2}\log |\ve|))$.
When $m=0$ and $0<-\alpha_0\ve<|\alpha_0|\ve_0$, the resonance corresponds to a negative eigenvalue 
of $-\Delta +V_\ve$ at $\ve \alpha_0+ O(\ve^2 \log|\ve|)$.
\end{lem}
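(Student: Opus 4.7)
The plan is to follow the proof of Lemma \ref{l:evpert} closely, but to drop the Fourier-mode projection $\proj_\ell$: under the current hypotheses the operator $R_{V_0}(\lambda)$ itself has only a rank-one singularity at $\lambda = 0$, so the analysis goes through on all of $L^2_c$ without decomposing into angular modes. Concretely, because $0$ is a simple eigenvalue of $-\Delta+V_0$ with eigenfunction $\psi$ and there are neither $s$- nor $p$-resonances, the expansion from \cite{cdgen} takes the form
\begin{equation*}
\chi R_{V_0}(\lambda)\chi = -\frac{1}{\lambda^2}\chi\psi\otimes\psi\chi + E(\lambda),
\end{equation*}
with an operator-norm remainder $E(\lambda)$ that is at worst logarithmic as $\lambda\to 0$, uniformly for $\arg\lambda$ in any fixed bounded interval.

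Set $\tilde R_{V_0}(\lambda) := R_{V_0}(\lambda) + \lambda^{-2}\psi\otimes\psi$, which is bounded at $\lambda = 0$. For $|\lambda|$ and $|\ve|$ sufficiently small and $\arg\lambda$ in a fixed bounded range, $I + (V_\ve - V_0)\tilde R_{V_0}(\lambda)\chi$ is invertible, and factoring this piece out of $I + (V_\ve - V_0)R_{V_0}(\lambda)\chi$ reduces, via the Birman--Schwinger principle, the problem of locating poles of $\Rve\chi$ to that of finding zeros of
\begin{equation*}
Q(\lambda,\ve) := \det\Big(I - \tfrac{1}{\lambda^2}(V_\ve - V_0)\psi\otimes\psi\chi\bigl(I + (V_\ve - V_0)\tilde R_{V_0}(\lambda)\chi\bigr)^{-1}\Big).
\end{equation*}
Expanding $\det(I + u\otimes v\,T) = 1 + \langle Tu,v\rangle$ and inserting $V_\ve - V_0 = \ve\dot V_0 + O(\ve^2)$ together with the resolvent expansion above gives
\begin{equation*}
Q(\lambda,\ve) = 1 - \frac{\ve}{\lambda^2}\bigl(\alpha_0 + O(\ve\log\lambda)\bigr).
\end{equation*}

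For each $m \in \Integers$, define $\tl_\ve(m) \in \Lambda$ by $\tl_\ve(m)^2 = \ve\alpha_0$ together with $|\arg\tl_\ve(m) - \pi(m + 1/4)| \le \pi/4$; this gives $\arg\tl_\ve(m) = m\pi$ when $\ve\alpha_0 > 0$ and $\arg\tl_\ve(m) = m\pi + \pi/2$ when $\ve\alpha_0 < 0$. Applying Rouch\'e's theorem to $\lambda^2 Q(\lambda,\ve)$ and $\lambda^2 - \ve\alpha_0$ on a circle of radius $|\ve|$ about $\tl_\ve(m)$ produces a unique simple zero $\lambda_\ve(m)$ with $\lambda_\ve(m) - \tl_\ve(m) = O_m(|\ve|)$. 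Substituting this crude bound back into the expansion of $Q$ and solving for the correction term refines it to $|\lambda_\ve(m)| = \sqrt{|\ve\alpha_0|} + O_m(|\ve|^{3/2}\log|\ve|)$ and $\arg\lambda_\ve(m) - \arg\tl_\ve(m) = O_m(|\ve|\log|\ve|)$, matching the statement. When $m = 0$ and $\ve\alpha_0 < 0$, the argument of $\lambda_\ve(0)$ lies within $O(|\ve|\log|\ve|)$ of $\pi/2$, so $\lambda_\ve(0)^2 = \ve\alpha_0 + O(\ve^2\log|\ve|)$ is real and negative, and the resulting pole is a genuine negative eigenvalue of $-\Delta + V_\ve$.

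The main obstacle is pinning down the precise form of the resolvent remainder $E(\lambda)$ without radial symmetry. In Lemma \ref{l:evpert}, the Fourier-mode decomposition made it possible to read off an $|\ell|$-dependent remainder from \cite{cdgen}, polynomial for $|\ell| > 2$ and only logarithmic for $|\ell| = 2$; here all modes contribute at once and the worst of them determines the loss. Once the correct remainder is identified, the Birman--Schwinger reduction, Rouch\'e argument, and eigenvalue identification go through essentially unchanged.
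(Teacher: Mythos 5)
Your proposal is correct and is exactly the route the paper intends: it omits a written proof for this lemma precisely because, when $R_{V_0}$ itself has a rank-one singularity at $\lambda=0$, one runs the argument of Lemma \ref{l:evpert} verbatim with the projection $\proj_\ell$ deleted and with the $O(\log\lambda)$ remainder (as in the $|\ell|=2$ case), which is what you do. The only point to tighten is the eigenvalue identification for $m=0$: one should conclude $\arg\lambda_\ve(0)=\pi/2$ exactly because the Rouch\'e disk lies in the physical half-plane, where any pole of the resolvent is an eigenvalue and hence lies on the positive imaginary axis, rather than inferring reality of $\lambda_\ve(0)^2$ from $\arg\lambda_\ve(0)$ being merely close to $\pi/2$.
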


\section{The scattering matrix and scattering phase for the circular well}
In this section we return to the circular well potential, $V= -a^2 \bbo_\rho$, with $a,\; \rho>0$ fixed, and study the derivative of the scattering phase, $\sigma'(\lambda)$, as $
\lambda \downarrow 0$, proving Theorem \ref{t:spintro}.
Because the potential is radial, the scattering matrix $S(\lambda)$ is diagonal when using the eigenfunctions $e^{i\ell \theta}$ as a basis.
The eigenvalue $S_\ell$ of $S(\lambda)$ associated with $e^{i\ell\theta}$ is the ratio of the outgoing amplitude to the incoming amplitude for elements of the null space of $P-\lambda^2$:
\begin{equation}\label{eq:eigenS}
    S_\ell = \frac{c_\ell}{d_\ell} = -\frac{B^{(2)}_\ell}{B^{(1)}_\ell},\quad B^{(k)}_\ell(\lambda,\rho,a) := H^{(k)}_\ell(\lambda\rho)+A_\ell H^{(k)\prime}_\ell(\lambda\rho),\quad A_\ell(\lambda,\rho,a):= -\frac{\lambda}{\mu}\frac{J_\ell(\mu\rho)}{J_\ell'(\mu\rho)}.
\end{equation}
We note that $S_{-\ell}=S_{\ell}$ for all $(\lambda,\rho,a,\ell)$ since $\mathcal{C}_{-\ell}(z)=(-1)^{\ell}\mathcal{C}_{\ell}(z)$.
% and the extra $(-1)^{\ell}$ factors cancel in the numerator and denominator.
% Since $Q^{(k)}_\ell$ is an algebraic combination of elementary functions of $\lambda$ and the Bessel functions, we note that $\overline{Q^{(1)}_\ell(\lambda,\rho,a)} = Q^{(2)}_\ell(\overline\lambda,\rho,a)$.
% Colton's calculations
Thus \eqref{e:sigmadef} becomes
\begin{equation}\label{eq:sPhase}
    \sigma(\lambda) = \frac{1}{2\pi i}\sum_{\ell=-\infty}^{\infty}\log S_\ell = \sigma_0 + 2\sum_{\ell=1}^\infty \sigma_\ell,
\end{equation}
where the $\sigma_\ell$ are given by $\sigma_\ell(0) = 0$ and the following lemma, which among other things proves that the sum in \eqref{eq:sPhase} converges very rapidly.
\begin{lem}\label{l:spfundamentals}
    For every $\ell \in \mathbb N_0$, $\lambda\in \Lambda$, $a>0$ and $\rho>0$, we have
    \begin{equation}\label{eq:siglprime}
        \sigma'_\ell = -\frac{a^2}{\mu^2}\frac{2}{\pi^2\lambda}\frac{1-\frac{\ell^2}{(\lambda\rho)^2}A_\ell^2}{(J_\ell(\lambda\rho)+A_\ell J'_\ell(\lambda\rho))^2+(Y_\ell(\lambda\rho)+A_\ell Y'_\ell(\lambda\rho))^2}.
    \end{equation}
    Moreover, given $\lambda_0>0$, $a_0>0$, and $\rho_0>0$, there is $\ell_0 >0$ such that 
    \begin{equation}\label{eq:siglprimebd}
    |\sigma_\ell'(\lambda) | \lesssim \frac {\ell^3} {\mu^2 \lambda  } \Big(\frac {\lambda \rho e} {2\ell} \Big)^{2\ell},
    \end{equation}
    uniformly for $\ell \ge \ell_0$, $\lambda \in (0,\lambda_0]$, $a \in (0,a_0]$, and $\rho \in (0,\rho_0]$.
\end{lem}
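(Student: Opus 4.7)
The derivation of \eqref{eq:siglprime} begins from $\sigma_\ell = (2\pi i)^{-1}\log S_\ell$ with $S_\ell = -B^{(2)}_\ell/B^{(1)}_\ell$. Writing $H^{(1,2)}_\ell = J_\ell \pm iY_\ell$ gives $B^{(1)}_\ell = \alpha + i\beta$ and $B^{(2)}_\ell = \alpha - i\beta$, where $\alpha = J_\ell(\lambda\rho) + A_\ell J_\ell'(\lambda\rho)$ and $\beta = Y_\ell(\lambda\rho) + A_\ell Y_\ell'(\lambda\rho)$ are real for real $\lambda > 0$. A direct computation of the logarithmic derivative gives
\[
\sigma_\ell' = \frac{1}{2\pi i}\left(\frac{B^{(2)\prime}_\ell}{B^{(2)}_\ell} - \frac{B^{(1)\prime}_\ell}{B^{(1)}_\ell}\right) = \frac{\beta\alpha' - \alpha\beta'}{\pi(\alpha^2 + \beta^2)},
\]
which already matches the denominator of \eqref{eq:siglprime}.

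To handle the numerator I expand $\alpha'$ and $\beta'$ treating $A_\ell$ as a $\lambda$-dependent quantity (via $\mu = \sqrt{\lambda^2+a^2}$), and group $\beta\alpha' - \alpha\beta'$ by powers of $A_\ell$ and $A_\ell'$. The $A_\ell A_\ell'$ contribution cancels identically; the remaining pieces reduce via the Wronskian $J_\ell Y_\ell' - J_\ell' Y_\ell = 2/(\pi\lambda\rho)$ together with the Bessel equation \eqref{e:bessdiff} (used to eliminate $J_\ell''$ and $Y_\ell''$ in favor of lower-order terms). This yields
\[
\beta\alpha' - \alpha\beta' = \frac{2}{\pi\lambda}\left[-1 - A_\ell^2 + \frac{A_\ell}{\lambda\rho} - \frac{A_\ell'}{\rho} + \frac{\ell^2 A_\ell^2}{(\lambda\rho)^2}\right].
\]
Next I compute $A_\ell'$ directly from $A_\ell = -(\lambda/\mu)J_\ell(\mu\rho)/J_\ell'(\mu\rho)$, using $\mu' = \lambda/\mu$ and \eqref{e:bessdiff} to simplify $(J_\ell/J_\ell')'(\mu\rho)$; the outcome is $A_\ell' = A_\ell/\lambda - \rho\lambda^2/\mu^2 - \rho A_\ell^2 + \ell^2 A_\ell^2/(\mu^2\rho)$. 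Substituting this into the bracket and using $a^2/\mu^2 = 1 - \lambda^2/\mu^2$ collapses it to $-(a^2/\mu^2)(1 - \ell^2 A_\ell^2/(\lambda\rho)^2)$, giving \eqref{eq:siglprime}.

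For the bound \eqref{eq:siglprimebd} I use the leading-order Bessel asymptotics on compact intervals: as $\ell \to \infty$ with the argument bounded, $J_\ell(z) \sim (z/2)^\ell/\ell!$, $J_\ell'(z) \sim (z/2)^{\ell-1}/(2(\ell-1)!)$, and $Y_\ell(z) \sim -(\ell-1)!(2/z)^\ell/\pi$. The first two give $A_\ell = -(\lambda\rho/\ell)(1 + O((\mu\rho)^2/\ell))$, so the numerator $|1 - \ell^2 A_\ell^2/(\lambda\rho)^2|$ is bounded uniformly in $\ell$; the identity $J_\ell'(z)^2 - (\ell/z)^2 J_\ell(z)^2 = -J_{\ell-1}(z)J_{\ell+1}(z)$ (from \eqref{eq:recur}) gives an alternative route. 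For the denominator, $Y_\ell'(z) \sim -\ell Y_\ell(z)/z$ combined with $A_\ell \sim -\lambda\rho/\ell$ shows $A_\ell Y_\ell'(\lambda\rho) \sim Y_\ell(\lambda\rho)$, so the two terms in $\beta$ reinforce at leading order and $\alpha^2 + \beta^2 \gtrsim |Y_\ell(\lambda\rho)|^2 \gtrsim \pi^{-2}((\ell-1)!)^2(2/(\lambda\rho))^{2\ell}$. Combining these with Stirling's lower bound $\ell! \geq (\ell/e)^\ell$ and absorbing $a^2 \leq a_0^2$ and remaining bounded constants produces \eqref{eq:siglprimebd}. The main obstacle is the algebraic reduction in the middle paragraph: verifying, through repeated use of Bessel's ODE and the Wronskian, that the $A_\ell'$ term precisely cancels the spurious $A_\ell$ and $1$ contributions and leaves the clean factor $(a^2/\mu^2)(1 - \ell^2 A_\ell^2/(\lambda\rho)^2)$.
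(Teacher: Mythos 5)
Your derivation of \eqref{eq:siglprime} is correct and is essentially the paper's computation: both start from the logarithmic derivative of $S_\ell=-B^{(2)}_\ell/B^{(1)}_\ell$, use Bessel's equation to eliminate second derivatives, compute $A_\ell'$ (your formula $A_\ell'=A_\ell/\lambda-\rho\lambda^2/\mu^2-\rho A_\ell^2+\ell^2A_\ell^2/(\mu^2\rho)$ is exactly the paper's expression rewritten via $J_\ell(\mu\rho)/J_\ell'(\mu\rho)=-\mu A_\ell/\lambda$), and collapse the result with the Wronskian $J_\ell Y_\ell'-J_\ell'Y_\ell=2/(\pi\lambda\rho)$; I checked the bracket reduction and it closes to $-(a^2/\mu^2)(1-\ell^2A_\ell^2/(\lambda\rho)^2)$ as claimed. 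Organizing the algebra through the real quantities $\alpha,\beta$ rather than the Hankel functions is only a cosmetic difference.

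For \eqref{eq:siglprimebd} you diverge slightly: the paper invokes Olver's uniform large-order asymptotics precisely because the bound must hold uniformly down to $\lambda=0$, and it remarks that the fixed-argument asymptotics of DLMF 10.19(i) only suffice ``for $\lambda,\rho,a$ fixed.'' You use the fixed-argument forms $J_\ell(z)\sim(z/2)^\ell/\ell!$, $Y_\ell(z)\sim-(\ell-1)!(2/z)^\ell/\pi$ and assert they hold ``with the argument bounded.'' That assertion is true, but it is the one step you should justify rather than cite: from the power series \eqref{e:bessdef} (and the analogous finite sum dominating $Y_\ell$) the relative errors are $O(z^2/\ell)$ uniformly for $z\in(0,z_0]$, which is what you need; as stated in DLMF the asymptotics are for fixed $z$. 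With that one-line verification your argument is complete, and in fact sharper than the paper's: your observation that $A_\ell=-(\lambda\rho/\ell)(1+O((\mu\rho)^2/\ell))$ makes the numerator $1-\ell^2A_\ell^2/(\lambda\rho)^2$ bounded (indeed $O((\mu\rho)^2/\ell)$), whereas the paper's cruder $|A_\ell|\lesssim\lambda\rho$ only gives $O(\ell^2)$ there; your route therefore yields a power of $\ell$ below the stated $\ell^3$, which of course still implies \eqref{eq:siglprimebd}. Your sign argument that $Y_\ell$ and $A_\ell Y_\ell'$ reinforce (both negative once $\ell>\mu\rho$, so $\beta^2\gtrsim Y_\ell^2$) is also correct and is needed to get the lower bound on the denominator.
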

\begin{proof} We have
\begin{equation}\label{eq:spfirst}
   2\pi i  \sigma'_\ell(\lambda) = \frac{\partial_\lambda B^{(2)}_\ell}{ B^{(2)}_\ell}-\frac{\partial_\lambda B^{(1)}_\ell}{B^{(1)}_\ell} = \frac{B^{(1)}_\ell\partial_\lambda B^{(2)}_\ell-B^{(2)}_\ell\partial_\lambda B^{(1)}_\ell}{B^{(1)}_\ell B^{(2)}_\ell}.
\end{equation}
    To simplify the denominator, we expand by writing the Hankel functions in terms of $J_\ell$ and $Y_\ell$:
    \begin{equation*}
    \begin{aligned}
        B^{(1)}_\ell B^{(2)}_\ell %&= H^{(1)}(\lambda\rho)H^{(2)} (\lambda\rho) + A_\ell^2 H^{(1)\prime}(\lambda\rho)H^{(2)\prime}(\lambda\rho)+2A_\ell\left[J_\ell(\rho\lambda)J'_\ell(\rho\lambda)+Y_\ell(\rho\lambda)Y'_\ell(\rho\lambda)\right]\\
        &=(J_\ell(\lambda\rho)+A_\ell J'_\ell(\lambda\rho))^2+(Y_\ell(\lambda\rho)+A_\ell Y'_\ell(\lambda\rho))^2.
    \end{aligned}
    \end{equation*}
    As for the numerator we first use Bessel's equation on $A'_\ell$:
    \begin{equation*}
    \begin{aligned}
        A'_\ell&= \frac{-a^2}{\mu^3}\frac{J_\ell(\mu\rho)}{J_\ell'(\mu\rho)}-\frac{\lambda^2\rho}{\mu^2}+\frac{\lambda^2\rho}{\mu^2}\frac{J_\ell(\mu\rho)J_\ell''(\mu\rho)}{(J_\ell'(\mu\rho))^2}\\
        &=\frac{-1}{\mu}\frac{J_\ell(\mu\rho)}{J_\ell'(\mu\rho)} - \frac{\lambda^2\rho}{\mu^2}\left(1+\left(\frac{J_\ell(\mu\rho)}{J_\ell'(\mu\rho)}\right)^2\left(1-\frac{\ell^2}{(\mu\rho)^2}\right)\right).
    \end{aligned}
    \end{equation*}
    Using Bessel's equation on the Hankel functions, we combine in a similar manner for $\partial_\lambda B^{(2)}_\ell$ to get
    \begin{equation*}
        \partial_\lambda B_\ell^{(2)}
        =-\rho A_\ell\left(1 -\frac{\ell^2}{(\lambda\rho)^2}\right)H^{(2)}_\ell(\lambda\rho)- \rho A_\ell^2\left(1-\frac{\ell^2}{(\mu\rho)^2}\right)H^{(2)\prime}_\ell(\lambda\rho) + \frac{a^2\rho}{\mu^2}H^{(2)\prime}_\ell(\lambda\rho).
    \end{equation*}
    Multiplying by $B^{(1)}_\ell$ gives us the expanded form
    \begin{equation*}
    \begin{aligned}
        B_\ell^{(1)}\partial_\lambda B_\ell^{(2)} =&\ -\rho A_\ell\left(1 -\frac{\ell^2}{(\lambda\rho)^2}\right)H^{(2)}_\ell(\lambda\rho)H^{(1)}_\ell(\lambda\rho)- \rho A_\ell^2\left(1-\frac{\ell^2}{(\mu\rho)^2}\right)H^{(1)}_\ell(\lambda\rho)H^{(2)\prime}_\ell(\lambda\rho)\\
        &\ + \frac{a^2\rho}{\mu^2}H^{(1)}_\ell(\lambda\rho)H^{(2)\prime}_\ell(\lambda\rho)-\rho A_\ell^2\left(1 -\frac{\ell^2}{(\lambda\rho)^2}\right)H^{(1)\prime}_\ell(\lambda\rho)H^{(2)}_\ell(\lambda\rho)\\
        &\ - \rho A_\ell^3\left(1-\frac{\ell^2}{(\mu\rho)^2}\right)H^{(1)\prime}_\ell(\lambda\rho)H^{(2)\prime}_\ell(\lambda\rho) + A_\ell\frac{a^2\rho}{\mu^2}H^{(1)\prime}_\ell(\lambda\rho)H^{(2)\prime}_\ell(\lambda\rho).
    \end{aligned}
    \end{equation*}
    The product $B^{(2)}_\ell \partial_\lambda B_\ell^{(1)}$ is given by the same form as above but swapping $(1)\leftrightarrow(2)$ in the superscripts of the Hankel functions, so we get
    \begin{equation*}
    \begin{aligned}
        B_\ell^{(1)}&\partial_\lambda B_\ell^{(2)}-B_\ell^{(2)}\partial_\lambda B_\ell^{(1)} \\
        =&\ \left[- \rho A_\ell^2\left(1-\frac{\ell^2}{(\mu\rho)^2}\right)+\frac{a^2\rho}{\mu^2}+\rho A_\ell^2\left(1 -\frac{\ell^2}{(\lambda\rho)^2}\right)\right]\left[H^{(1)}_\ell(\lambda\rho)H^{(2)\prime}_\ell(\lambda\rho)-H^{(1)\prime}_\ell(\lambda\rho)H^{(2)}_\ell(\lambda\rho)\right]\\
        =&\ 2i\left[- \rho A_\ell^2\left(1-\frac{\ell^2}{(\mu\rho)^2}\right)+\frac{a^2\rho}{\mu^2}+\rho A_\ell^2\left(1 -\frac{\ell^2}{(\lambda\rho)^2}\right)\right]\left[J'_\ell(\lambda\rho)Y_\ell(\lambda\rho)-J_\ell(\lambda\rho)Y'_\ell(\lambda\rho)\right].
    \end{aligned}
    \end{equation*}
    Using Wronskian identities on the remaining Bessel functions we get
    \begin{equation*}
    \begin{aligned}
    B_\ell^{(1)}\partial_\lambda B_\ell^{(2)}-B_\ell^{(2)}\partial_\lambda B_\ell^{(1)}  &= 
        -\frac{4i}{\pi\lambda\rho}\left[- \rho A_\ell^2\left(1-\frac{\ell^2}{(\mu\rho)^2}\right)+\frac{a^2\rho}{\mu^2}+\rho A_\ell^2\left(1 -\frac{\ell^2}{(\lambda\rho)^2}\right)\right]\\
        &=\frac{4i}{\pi\lambda}\left[ \frac{(a\ell)^2}{(\lambda\mu\rho)^2}A_\ell^2-\frac{a^2}{\mu^2}\right]
        =-\frac{a^2}{\mu^2}\frac{4i}{\pi\lambda}\left( 1-\frac{\ell^2}{(\lambda\rho)^2}A_\ell^2\right).
    \end{aligned}
    \end{equation*}
    Combining the two expressions for $B_\ell^{(1)}\partial_\lambda B_\ell^{(2)}-B_\ell^{(2)}\partial_\lambda B_\ell^{(1)}$ and $B^{(1)}_\ell B^{(2)}_\ell$ 
    with \eqref{eq:spfirst} yields \eqref{eq:siglprime}.
    % Using the identities $2\mathcal{C}_{\ell}'(z) =\mathcal{C}_{\ell-1}(z)-\mathcal{C}_{\ell+1}(z)$ and $\frac{2\ell}{z}\mathcal{C}_{\ell}(z) =\mathcal{C}_{\ell-1}(z)+\mathcal{C}_{\ell+1}(z)$, we can simplify the denominator

To prove \eqref{eq:siglprimebd}, we use the notation $f \asymp g$ to mean $f \lesssim g$ and $g \lesssim f$. By Olver's uniform large order Bessel function asymptotics (see e.g. \cite[(A.7)]{dgs}), for every $x_0 >0$ there is $\ell_0>0$ such that
\[
J_\ell(x) \asymp  x J'_\ell(x), \qquad - Y_\ell(x) \asymp  x Y'_\ell(x) \asymp \ell^{-1/2} \exp(\ell \xi),
\]
\[
\xi = \int_{x/\ell}^1 \sqrt{t^{-2}-1}dt = \log(\ell/x) + \log 2
 - 1 + O(x^2/\ell^2),\]
%\[
% - \ell^{1/2} Y_\ell(x) \gtrsim \exp\Big(\ell \int_{x/\ell}^1 t^{-1} \sqrt{1-t^2}dt\Big) > \exp\Big(\ell \int_{x/\ell}^1 (t^{-1} -1)dt\Big) > (x e/ \ell)^{-\ell},
%\]
uniformly for $x \in (0,x_0]$ and $\ell \ge \ell_0$. Hence 
\[
-A_\ell \asymp \lambda \rho,
\]
and
\[
|\sigma'(\lambda)| \lesssim \frac {\ell^2} {\mu^2 \lambda  Y_\ell(\lambda \rho)^2} \lesssim \frac {\ell^3} {\mu^2 \lambda  } \Big(\frac {\lambda \rho e} {2\ell} \Big)^{2\ell}.
\]
This proves \eqref{eq:siglprimebd}. 
Note that for $\lambda,\rho,a$ fixed we can get  \eqref{eq:siglprimebd} from the simpler large order, fixed argument, asymptotics of Section 10.19(i) of \cite{dlmf}.
\end{proof}

We remark that \eqref{eq:siglprimebd} implies that
$$\sigma'(\lambda)= \sigma_0'(\lambda)+2\sum_{\ell=1}^\infty \sigma_\ell'(\lambda)= \sigma_0'(\lambda)+2\sum_{\ell=1}^{\ell_0} \sigma_\ell'(\lambda)+O(\lambda^{2\ell_0+1})\; \text{as}\; \lambda \rightarrow 0$$
when $\ell_0$ is sufficiently large.
%Differentiating  term by term, which is justified by \eqref{eq:siglprimebd} and the fact that $\sigma_\ell(0) = 0$ for all $\ell$, gives
%\begin{equation}\label{eq:sPhaseD}
  %  \sigma'(\lambda)=\frac{1}{2\pi i}\sum_{\ell=-\infty}^{\infty}\frac{\partial_\lambda B^{(2)}_\ell}{ B^{(2)}_\ell}-\frac{\partial_\lambda B^{(1)}_\ell}{B^{(1)}_\ell}=\frac{1}{2\pi i}\sum_{\ell=-\infty}^{\infty}\frac{B^{(1)}_\ell\partial_\lambda B^{(2)}_\ell-B^{(2)}_\ell\partial_\lambda B^{(1)}_\ell}{B^{(1)}_\ell B^{(2)}_\ell}.
%\end{equation}

In this section we prove an expansion for $\sigma'(\lambda)$ for small $\lambda$. 
%Assuming $J_{\nu-1}(a\rho)=0$ this yields the following Taylor expansion at $\lambda=0$:
%\begin{align*}
   % J_{\nu-1}(\mu\rho) &= J_{\nu-1}(a\rho) + (\lambda\rho)^2\frac{(\nu-1)J_{\nu-1}(a\rho)-a\rho J_{\nu}(a\rho)}{(a\rho)^2}+ O(\lambda^4)\\
    %&=-(\lambda\rho)^2\frac{J_{\nu}(a\rho)}{2a\rho}+ O(\lambda^4).
%\end{align*}
We will use repeatedly the general property that 
\begin{equation} \label{eq:geos}
\text{if } r(x) = o(f(x)) \text{ then } \frac{1}{f(x)+r(x)}=\frac{1}{f(x)}-\frac{r(x)}{(f(x))^2}+ O\left( \left( \frac{r(x)}{(f(x))^2}\right)^2\right).
\end{equation}
%$r(x)\to 0$ and $f(x)\not\to 0$ as $x\to 0$ then $\frac{1}{f(x)+r(x)} = \frac{1}{f(x)} + O(r(x)/f(x)^2)$.

  We begin with $\sigma_0'$.
  \begin{lem}\label{l:s'0}
For $\lambda\to 0$, we have the following asymptotic expressions for $\sigma'_0(\lambda)$:
\begin{equation}
    \sigma'_0 (\lambda)= \begin{dcases}
        -\frac{\rho^4}{8}\lambda^3 + O(\lambda^5\log\lambda),& J_1(a\rho)= 0\\
        \frac{-2/\lambda}{4(\log(\lambda/2) + C(\rho,a) + \gamma)^2+\pi^2} + O(\lambda/(\log\lambda)^2),& J_1(a\rho)\neq 0,
    \end{dcases}
\end{equation}
where 
\begin{equation*}
    C(\rho,a) = \log\rho + \frac{1}{a\rho}\frac{J_0(a\rho)}{J_1(a\rho)}.
\end{equation*}
\end{lem}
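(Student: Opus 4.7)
The plan is to start from the explicit formula \eqref{eq:siglprime} of Lemma \ref{l:spfundamentals}, which simplifies at $\ell=0$ (the $\ell^2 A_\ell^2/(\lambda\rho)^2$ factor vanishes) to
\[
\sigma_0'(\lambda)=-\frac{a^2}{\mu^2}\,\frac{2}{\pi^2\lambda}\,\frac{1}{D(\lambda)},\qquad D(\lambda)=\bigl(J_0(\lambda\rho)+A_0 J_0'(\lambda\rho)\bigr)^2+\bigl(Y_0(\lambda\rho)+A_0 Y_0'(\lambda\rho)\bigr)^2,
\]
where, using $J_0'=-J_1$, we can rewrite $A_0=\frac{\lambda J_0(\mu\rho)}{\mu J_1(\mu\rho)}$. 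Both cases reduce to expanding $A_0$, then $D(\lambda)$, for $\lambda\to 0$, using the small-argument asymptotics \eqref{e:bessdef} and \eqref{eq:Yexp}, and finally applying \eqref{eq:geos}. Throughout I use $\mu=\sqrt{\lambda^2+a^2}=a+\frac{\lambda^2}{2a}+O(\lambda^4)$ and $a^2/\mu^2=1+O(\lambda^2)$.

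Case $J_1(a\rho)\neq 0$: Here $A_0=\frac{\lambda J_0(a\rho)}{a J_1(a\rho)}+O(\lambda^3)$, so $A_0=O(\lambda)$. Then $J_0(\lambda\rho)+A_0 J_0'(\lambda\rho)=1+O(\lambda^2)$ while, using $Y_0'=-Y_1$ and the expansions in \eqref{eq:Yexp},
\[
Y_0(\lambda\rho)+A_0 Y_0'(\lambda\rho)=\frac{2}{\pi}\!\left[\log(\lambda/2)+\log\rho+\gamma+\frac{J_0(a\rho)}{a\rho J_1(a\rho)}\right]+O(\lambda^2\log\lambda)=\frac{2}{\pi}[L+O(\lambda^2\log\lambda)],
\]
where $L=\log(\lambda/2)+C(\rho,a)+\gamma$. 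Therefore $D(\lambda)=1+\frac{4L^2}{\pi^2}+O(\lambda^2(\log\lambda)^2)$, and $\sigma_0'(\lambda)=-\frac{2/\lambda}{4L^2+\pi^2}+O(\lambda/(\log\lambda)^2)$ after applying \eqref{eq:geos}, noting that the relative error $O(\lambda^2(\log\lambda)^2)/L^2$ is $O(\lambda^2)$.

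Case $J_1(a\rho)=0$: Now $J_1(\mu\rho)$ vanishes at $\lambda=0$, so $A_0$ blows up. Taylor-expanding $J_0(\mu\rho)$ and $J_1(\mu\rho)$ around $a\rho$ — using $J_0'(a\rho)=-J_1(a\rho)=0$, $J_1'(a\rho)=J_0(a\rho)$, and, from Bessel's equation, $J_1''(a\rho)=-J_0(a\rho)/(a\rho)$ — gives $J_1(\mu\rho)=\frac{\rho\lambda^2}{2a}J_0(a\rho)(1+O(\lambda^2))$ and $J_0(\mu\rho)=J_0(a\rho)+O(\lambda^4)$. Combined with $a/\mu=1+O(\lambda^2)$, I expect the leading-order cancellations to yield $A_0=\frac{2}{\rho\lambda}+O(\lambda^3)$. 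With this,
\[
J_0(\lambda\rho)+A_0 J_0'(\lambda\rho)=\Bigl(1-\tfrac{(\lambda\rho)^2}{4}+O(\lambda^4)\Bigr)+\tfrac{2}{\rho\lambda}\Bigl(-\tfrac{\lambda\rho}{2}+\tfrac{(\lambda\rho)^3}{16}+O(\lambda^5)\Bigr)=-\tfrac{\lambda^2\rho^2}{8}+O(\lambda^4),
\]
while $Y_0(\lambda\rho)+A_0 Y_0'(\lambda\rho)=\frac{4}{\pi\rho^2\lambda^2}+O(\log\lambda)$. Hence $D(\lambda)=\frac{16}{\pi^2\rho^4\lambda^4}(1+O(\lambda^2\log\lambda))$, and plugging back yields $\sigma_0'(\lambda)=-\frac{\rho^4}{8}\lambda^3+O(\lambda^5\log\lambda)$.

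The main obstacle is the bookkeeping in the $s$-resonance case: one must retain just enough terms in the expansion of $A_0$ so that the leading $\lambda^0$ parts cancel in $J_0(\lambda\rho)+A_0J_0'(\lambda\rho)$, producing the $O(\lambda^2)$ term that squares to the correct coefficient; and one must verify that sub-leading contributions to $A_0$ (in particular any $O(\lambda)$ correction, which the Bessel-equation computation of $J_1''(a\rho)$ is designed to rule out) do not spoil this cancellation. Once this algebra is done, \eqref{eq:geos} applied to $1/D(\lambda)$ gives the stated error $O(\lambda^5\log\lambda)$.
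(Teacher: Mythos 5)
Your proposal is correct and follows the paper's proof almost exactly in the case $J_1(a\rho)\neq 0$. In the case $J_1(a\rho)=0$ the paper avoids the blow-up of $A_0$ by first clearing denominators — multiplying the numerator and denominator of \eqref{eq:siglprime} by $\mu^2(J_1(\mu\rho))^2$ to get \eqref{eq:so'alt} — whereas you expand the divergent $A_0$ directly; both routes work. The cancellation you flag as the main obstacle does hold: Taylor expanding with $J_1'(a\rho)=J_0(a\rho)$, $J_1''(a\rho)=-J_0(a\rho)/(a\rho)$ and $\mu=a+\tfrac{\lambda^2}{2a}-\tfrac{\lambda^4}{8a^3}+O(\lambda^6)$ gives $J_1(\mu\rho)=J_0(a\rho)\tfrac{\rho\lambda^2}{2a}\bigl(1-\tfrac{\lambda^2}{2a^2}+O(\lambda^4)\bigr)$, which exactly cancels the $O(\lambda^2)$ relative correction from $\lambda/\mu$, so indeed $A_0=\tfrac{2}{\rho\lambda}+O(\lambda^3)$. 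But note that this refinement is not actually needed: even the immediate bound $A_0=\tfrac{2}{\rho\lambda}(1+O(\lambda^2))$ suffices, because $(J_0(\lambda\rho)+A_0J_0'(\lambda\rho))^2=O(\lambda^4)$ is negligible against $(Y_0(\lambda\rho)+A_0Y_0'(\lambda\rho))^2=\tfrac{16}{\pi^2\rho^4\lambda^4}(1+O(\lambda^2\log\lambda))$, which alone determines both the leading term $-\rho^4\lambda^3/8$ and the error $O(\lambda^5\log\lambda)$.
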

%Before proving the lemma, we explain the connection between $C(\rho,a)$ and an element of the null space of $-\Delta-a^2 \bbo_\rho$.  When $J_1(\rho a)\not =0$, so that 
%$-\Delta-a^2 \bbo_\rho$ does not have an $s$-wave resonance (see Lemma \ref{l:resinlimit}), there is a function $u$ satisfying 
%$(-\Delta-a^2 \bbo_\rho)u=0$ and $u(x)=\log |x|+O(1)$ as $|x|\rightarrow \infty$.  Then $C(\rho, a) =\lim_{|x|\rightarrow \infty}(\log|x|-u(x))$, and 
%this $C(\rho, a)$ is analogous to what is in the Dirichlet obstacle case the logarithm of the logarithmic capacity of the obstacle.
%\footnote{probably some of this discussion should be moved to the introduction, but here's something for now.}
Compare \cite[(1.11)]{cdobs}, for asymptotics of the derivative of the scattering phase in the Dirichlet obstacle case.
\begin{proof}
First we assume $J_1(a\rho)\neq 0$.  Using 
\begin{equation}\label{eq:simplebf0}
J_0'(z)=-J_1(z),\; Y_0'(z)=-Y_1(z)
\end{equation}
and  expanding the two terms in the denominator of \eqref{eq:siglprime} using \eqref{eq:Yexp} yields
\begin{equation*}
\begin{aligned}
    \left(J_0(\lambda\rho) - A_0 J_1(\lambda\rho)\right)^2 &= 1 + O(\lambda^2),\\
    \left(Y_0(\lambda\rho) - A_0 Y_1(\lambda\rho)\right)^2 & = \left[ \frac{2}{\pi}(\log (\lambda \rho/2) +\gamma)+\frac{2}{\pi a \rho}\frac{J_0(a \rho)}{J_1(a\rho)} + O(\lambda^2 \log \lambda)\right]^2 \\ 
    &=\frac{4}{\pi^2}\left(\log(\lambda/2)+\gamma+C(\rho,a)\right)^2+O(\lambda^2\log^2\lambda).
\end{aligned}
\end{equation*}
%Here the $O(\lambda^2\log^2\lambda)$ comes from multiplying $O(\lambda^2\log\lambda)$ in the expansion of $A_0Y_1$ or $Y_0$ by the term $\frac{2}{\pi}\log(\rho\lambda/2)$
%in the expansion of $Y_0$ as every other term is higher order.  
Putting this into \eqref{eq:siglprime} we get
\begin{equation}\label{eq:sig0int}
    \sigma'_0 (\lambda)= -\frac{a^2}{\mu^2}\frac{2}{\pi^2\lambda}\frac{1}{1 + \frac{4}{\pi^2}\left(\log(\lambda/2)+\gamma+C(\rho,a)\right)^2+O(\lambda^2\log^2\lambda)}.
\end{equation}
%We shall use that if $f(x)\not \rightarrow 0$, $r(x)\rightarrow 0$ as $x\rightarrow 0$ implies $(f(x)+r(x))^{-1}= (f(x))^{-1}+ O(r(x)/(f(x))^{2})$, and 
Using \eqref{eq:geos}) gives
$$\mu^{-2}(\lambda)= \frac{1}{a^2} -\frac{\lambda^2}{a^4}+O(\lambda^4).$$
%Letting $f$ be the quadratic term in the denominator and $r = O(\lambda^2\log^2\lambda)$ and expanding $\frac{a^2}{\mu^2}$, we have the following power series expansion:
Applying these observations to \eqref{eq:sig0int} yields
\begin{align*}
  \sigma'_0 (\lambda) &= -\frac{2}{\pi^2\lambda}\left[\frac{1}{1 + \frac{4}{\pi^2}\left(\log(\lambda/2)+\gamma+C(\rho,a)\right)^2} + O(\lambda^2/(\log\lambda)^2)\right]\\
    &=\frac{-2/\lambda}{4\left(\log(\lambda/2)+\gamma+C(\rho,V)\right)^2+\pi^2} + O(\lambda/(\log\lambda)^2).
\end{align*}

Now consider the case $J_1(\rho a)=0$.  Here we  again use \eqref{eq:simplebf0}, and multiply  both the numerator and 
denominator of \eqref{eq:siglprime} by $\mu^2 (J_1(\mu \rho))^2$ to get
\begin{equation}\label{eq:so'alt}
\sigma_0'(\lambda)=-a^2 \frac{2}{\pi^2\lambda} \frac{ (J_1(\mu \rho))^2}{ (\mu J_1(\mu \rho) J_0(\lambda\rho)-\lambda J_0(\mu \rho)  J_1(\lambda\rho))^2+(\mu J_1(\mu \rho) Y_0(\lambda\rho)-\lambda J_0(\mu \rho)Y_1(\lambda\rho))^2}.
\end{equation}
Since in this case 
\begin{equation}\label{eq:J1exp}
J_1(\mu \rho)=  J_1'(a \rho)\frac{\rho \lambda^2}{2a}+O(\lambda^4)= -J_0(a \rho)\frac{\rho \lambda^2}{2a}+O(\lambda^4)
\end{equation}
we get from the expansions \eqref{eq:Yexp}
\begin{align}\label{eq:0de}
(\mu J_1(\mu \rho) J_0(\lambda\rho)-\lambda J_0(\mu \rho)  J_1(\lambda\rho))^2& = O(\lambda^4)\nonumber \\
(\mu J_1(\mu \rho) Y_0(\lambda\rho)-\lambda J_0(\mu \rho)Y_1(\lambda\rho))^2&= \left( \frac{2 J_0(\rho a)}{\pi  \rho} +O(\lambda^2 \log \lambda)\right)^2.
\end{align}
Using \eqref{eq:J1exp} and \eqref{eq:0de} in \eqref{eq:so'alt} gives the result when $J_1(a\rho)=0$.
%Now we consider if $J_1(\rho\sqrt{-V}) = 0$. Expanding the same terms in the denominator gives us
%\begin{equation*}
%\begin{aligned}
   % \left(J_0(\rho\lambda) - A_0 J_1(\rho\lambda)\right)^2 &= O(1),\\
    %\left(Y_0(\rho\lambda) - A_0 Y_1(\rho\lambda)\right)^2
   % &=\frac{16}{\pi^2(\lambda\rho)^4} + O(\lambda^{-2}\log\lambda),
%\end{aligned}
%\end{equation*}
%Plugging these in again gives us
%\begin{equation*}
%\begin{aligned}
   % \sigma'_0 &= -\frac{a^2}{\mu^2}\frac{2}{\pi^2\lambda}\frac{1}{\frac{16}{\pi^2(\lambda\rho)^4} + O(\lambda^{-2}\log\lambda)}=-\frac{a^2}{\mu^2}\frac{(\lambda\rho)^4}{8\lambda}\frac{1}{1 + O(\lambda^2\log\lambda)}=-\frac{\rho^4}{8}\lambda^3 + O(\lambda^5\log\lambda).
%\end{aligned}
%\end{equation*}
\end{proof}

We give an alternate expression for $\sigma_\ell'$.   
%First we note that recurrence relations for Bessel functions imply that if $\ell\geq 1$ 
%\begin{equation}\label{eq:Jdiff}
%J_{\ell -1}(\mu \rho)-2 \frac{\ell}{\mu \rho} J_\ell (\mu \rho)=-J_{\ell+1}(\mu \rho).
%\end{equation}
Using the recurrence relations \eqref{eq:recur}, we find 
\begin{align}\label{eq:intermediate}
\mu^2 (J_{\ell}'(\mu \rho))^2-\frac{\ell^2}{\rho^2}J_{\ell}(\mu \rho))^2& = \mu^2 \left[ \Big(J_{\ell-1}(\mu \rho)-\frac{\ell}{\mu \rho}J_{\ell}(\mu\rho)\Big)^2-\frac{\ell^2}{(\mu \rho)^2}(J_\ell(\mu \rho))^2\right]\nonumber \\
& = -\mu^2 J_{\ell-1}(\mu \rho)J_{\ell +1}(\mu \rho).
\end{align}
Multiplying the numerator and denominator in \eqref{eq:siglprime} by $(\mu J_\ell'(\mu \rho))^2$
and using the identities \eqref{eq:recur} and \eqref{eq:intermediate}, yields that for $\ell \geq 1$
\begin{equation}\label{eq:sigell'alt}
\sigma_\ell'(\lambda) = \frac{2 a^2}{\pi^2 \lambda} \frac{ J_{\ell-1}(\mu \rho) J_{\ell +1} (\mu \rho)}
{( \mu J_{\ell-1}(\mu \rho)J_{\ell}(\lambda \rho)-\lambda J_\ell (\mu \rho)J_{\ell -1} (\lambda \rho))^2 + 
( \mu J_{\ell-1}(\mu \rho)Y_{\ell}(\lambda \rho)-\lambda J_\ell (\mu \rho)Y_{\ell -1} (\lambda \rho))^2}.
\end{equation}
This expression shows perhaps more explicitly than  \eqref{eq:siglprime} why we should expect
different sorts of behavior in the three cases $J_{\ell+ 1}(a\rho)=0$, $J_{\ell-1}(\rho a)=0$,
 and $J_{\ell -1}(\rho a)J_{\ell+1}(\rho a)\not =0$.  

The next lemma will be used to find
 asymptotics of $\sigma_{\ell}'(\lambda)$ for $\ell\geq 1$ when $J_{\ell -1}(\rho a)\not =0$.   
 \begin{lem}\label{l:intlgeq1}
If $\ell \geq 1$ then if $J_{\ell-1}(\rho a)\not =0$, then
as $\lambda\to 0$
\begin{equation}
    \sigma'_\ell (\lambda) = \left( 
   \frac{ \rho }{[(\ell -1)!]^2 J_{\ell -1}(\rho a)}  \left(\frac{\rho \lambda}{2}\right)^{2\ell -1} + O(\lambda^{2\ell +1}) +\delta_{\ell,1}O(\lambda^3 \log \lambda)
   \right) J_{\ell+1}(\rho \mu) .
    \end{equation}
\end{lem}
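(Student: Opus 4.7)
The plan is to work directly from the alternative expression \eqref{eq:sigell'alt}. Since the right-hand side of the statement keeps $J_{\ell+1}(\rho\mu)$ unexpanded, the task reduces to expanding, as $\lambda\downarrow0$, the scalar prefactor
\[
\frac{2a^2}{\pi^2\lambda}\cdot\frac{J_{\ell-1}(\mu\rho)}{D_J^2+D_Y^2},
\]
where I write
\begin{align*}
D_J &= \mu J_{\ell-1}(\mu\rho)J_\ell(\lambda\rho)-\lambda J_\ell(\mu\rho)J_{\ell-1}(\lambda\rho),\\
D_Y &= \mu J_{\ell-1}(\mu\rho)Y_\ell(\lambda\rho)-\lambda J_\ell(\mu\rho)Y_{\ell-1}(\lambda\rho).
\end{align*}
The hypothesis $J_{\ell-1}(a\rho)\ne 0$ guarantees that $J_{\ell-1}(\mu\rho)=J_{\ell-1}(a\rho)+O(\lambda^2)$ stays bounded away from zero, so division by it causes no trouble.

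Next I would expand everything with the small-argument series. From \eqref{e:bessdef}, both summands in $D_J$ are of order $\lambda^\ell$, so $D_J^2 = O(\lambda^{2\ell})$ and is therefore negligible compared to $D_Y^2$. For $D_Y$, the leading singularity of $Y_\ell(\lambda\rho)$ from \eqref{eq:Yexp} yields
\[
D_Y = -\mu J_{\ell-1}(\mu\rho)\,\frac{(\ell-1)!\,2^\ell}{\pi(\lambda\rho)^\ell}\bigl(1+r_\ell(\lambda)\bigr),
\]
where $r_\ell$ collects the subleading contributions. A direct check using \eqref{eq:Yexp} gives $r_\ell(\lambda)=O(\lambda^2)$ when $\ell\ge 2$, since in that case the next term in $Y_\ell$ is $O(\lambda^{2-\ell})$ (relative order $\lambda^2$) and the $Y_{\ell-1}$ piece, weighted by $\lambda$, is likewise $O(\lambda^{2-\ell})$. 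For $\ell=1$ the two sources $Y_0(\lambda\rho)=O(\log\lambda)$ and the $\lambda\rho\log(\lambda\rho)/\pi$ correction in $Y_1$ both contribute at relative order $\lambda^2\log\lambda$, so $r_1(\lambda)=O(\lambda^2\log\lambda)$.

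Squaring, inverting via \eqref{eq:geos}, and using $a^2/\mu^2=1+O(\lambda^2)$ together with $J_{\ell-1}(\mu\rho)=J_{\ell-1}(a\rho)+O(\lambda^2)$ gives
\[
\frac{2a^2}{\pi^2\lambda}\cdot\frac{J_{\ell-1}(\mu\rho)}{D_J^2+D_Y^2}
=\frac{2(\lambda\rho)^{2\ell}}{\lambda\,((\ell-1)!)^2 2^{2\ell} J_{\ell-1}(a\rho)}\bigl(1+O(\lambda^2)+\delta_{\ell,1}O(\lambda^2\log\lambda)\bigr),
\]
which simplifies to
\[
\Bigl(\frac{\rho}{[(\ell-1)!]^2 J_{\ell-1}(a\rho)}\Bigl(\frac{\rho\lambda}{2}\Bigr)^{2\ell-1}\Bigr)\bigl(1+O(\lambda^2)+\delta_{\ell,1}O(\lambda^2\log\lambda)\bigr).
\]
Multiplying by the isolated factor $J_{\ell+1}(\mu\rho)$ gives the claim, the relative $O(\lambda^2)$ (respectively $O(\lambda^2\log\lambda)$) becoming the additive errors $O(\lambda^{2\ell+1})$ and $\delta_{\ell,1}O(\lambda^3\log\lambda)$ once they absorb the $(\rho\lambda/2)^{2\ell-1}$ prefactor.

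The substance of the argument is straightforward small-argument Bessel asymptotics; the only real obstacle is bookkeeping, specifically distinguishing the $\ell=1$ case, where the logarithm in $Y_0$ and the logarithmic subleading term in $Y_1$ both feed into the error $r_1$, from $\ell\ge 2$, where no logarithm appears because the relevant subleading terms of $Y_\ell$ and $Y_{\ell-1}$ are purely power-type at the orders that matter. Once this is tracked carefully the rest is a clean factorization.
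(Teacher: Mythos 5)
Your proposal is correct and follows essentially the same route as the paper: both start from \eqref{eq:sigell'alt}, observe that under $J_{\ell-1}(a\rho)\neq 0$ the denominator is dominated by the $\mu J_{\ell-1}(\mu\rho)Y_\ell(\lambda\rho)$ term (with $D_J^2=O(\lambda^{2\ell})$ negligible), invert, and track the relative error $O(\lambda^2)$, respectively $O(\lambda^2\log\lambda)$ when $\ell=1$. The bookkeeping of the logarithmic terms and the final absorption into the $(\rho\lambda/2)^{2\ell-1}$ prefactor matches the paper's computation.
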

\begin{proof}
%First we note that if $J_{\ell +1}(\rho a)=0$, then since $J_{\ell+1}'(\rho a)= J_{\ell}(\rho a)$ in this case,
%\begin{equation}\label{eq:Jellp1}
%J_{\ell+1}(\rho \mu)= J_{\ell}(\rho a)\frac{\rho \lambda^2}{2a} +O(\lambda^4)\; \text{when}\; J_{\ell+1}(\rho a)=0.
%\end{equation}
%
The assumptions $J_{\ell-1}(\rho a)\not =0$ and $\ell \geq 1$ ensure that
the leading term in the denominator of \eqref{eq:sigell'alt} as $\lambda \rightarrow 0$
is determined by $\mu J_{\ell-1}(\mu \rho)Y_\ell(\lambda \rho)$.  We have
\begin{equation} 
 \mu J_{\ell-1}(\mu \rho)Y_{\ell}(\lambda \rho)-\lambda J_\ell (\mu \rho)Y_{\ell -1} (\lambda \rho) = a J_{\ell -1}(a\rho) \left( -\frac{(\ell -1)! }{\pi} \right)\left( \frac{2}{\rho \lambda}\right)^\ell+O(\lambda^{-\ell +2})+\delta_{1,\ell}O(\lambda \log \lambda).
\end{equation}
The asymptotics of $J_\ell$, $J_{\ell-1}$ at $0$ yield $\mu J_{\ell-1}(\mu \rho)J_{\ell}(\lambda \rho)-\lambda J_\ell (\mu \rho)J_{\ell -1} (\lambda \rho) =O(\lambda^\ell)$.
Using these gives
\begin{align*}
& \left( ( \mu J_{\ell-1}(\mu \rho)J_{\ell}(\lambda \rho)-\lambda J_\ell (\mu \rho)J_{\ell -1} (\lambda \rho))^2 + 
( \mu J_{\ell-1}(\mu \rho)Y_{\ell}(\lambda \rho)-\lambda J_\ell (\mu \rho)Y_{\ell -1} (\lambda \rho))^2\right)^{-1} \\ & 
= \frac{\pi ^2 }{[a J_{\ell -1}(\rho a) (\ell -1)!]^2 }\left(\frac{\rho \lambda}{2}\right)^{2\ell}+ O( \lambda^{2\ell +2})+ \delta_{1,\ell}O(\lambda^4 \log \lambda)
\end{align*}
Putting this in \eqref{eq:sigell'alt} proves the lemma.
%When $J_{\ell+1}(\rho a)\not =0$, putting this in \eqref{eq:sigell'alt} proves this case of the lemma.
%When $J_{\ell+1}(\rho a)=0$, we use \eqref{eq:Jellp1} and that $J_{\ell+1}(\rho a)=0$ implies $J_{\ell}(\rho a)= \frac{\rho a}{2\ell}J_{\ell -1}(\rho a)$ to prove this case.
\end{proof}

Next we prove an expansion for $A_\ell$ which will be helpful in the $\ell\geq 1$, $J_{\ell -1}(\rho a)=0$ case.
\begin{lem}\label{lem:Al}
For $\ell\geq 1$, if $J_{\ell-1 }(a\rho)=0$ then
\begin{equation}
A_\ell (\lambda) =
        \frac{\lambda\rho}{\ell}-\frac{(\lambda\rho)^3}{2\ell^2}+O(\lambda^5)
        \end{equation}
\end{lem}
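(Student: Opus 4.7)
The plan is to start from the explicit formula
\[
A_\ell(\lambda) = -\frac{\lambda}{\mu}\frac{J_\ell(\mu\rho)}{J_\ell'(\mu\rho)}
\]
and use the recurrence relation $\mu\rho J_\ell'(\mu\rho) = \mu\rho J_{\ell-1}(\mu\rho) - \ell J_\ell(\mu\rho)$ from \eqref{eq:recur} to rewrite
\[
A_\ell(\lambda) = -\frac{\lambda\rho\, J_\ell(\mu\rho)}{\mu\rho\, J_{\ell-1}(\mu\rho) - \ell J_\ell(\mu\rho)}.
\]
The idea is then to Taylor expand both $J_\ell(\mu\rho)$ and $J_{\ell-1}(\mu\rho)$ around the point $a\rho$, using $\mu\rho - a\rho = \tfrac{\rho\lambda^2}{2a} - \tfrac{\rho\lambda^4}{8a^3} + O(\lambda^6)$.

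The key computation is to reduce all derivatives of Bessel functions at $a\rho$ to multiples of $J_\ell(a\rho)$ using the hypothesis $J_{\ell-1}(a\rho)=0$ together with the recurrences \eqref{eq:recur} and the Bessel equation \eqref{e:bessdiff}. Specifically, I would verify $J_{\ell-1}'(a\rho) = -J_\ell(a\rho)$, $J_{\ell-1}''(a\rho) = \tfrac{J_\ell(a\rho)}{a\rho}$ (using Bessel's equation), $J_\ell'(a\rho) = -\tfrac{\ell}{a\rho}J_\ell(a\rho)$, and $J_\ell''(a\rho) = \tfrac{\ell(\ell+1)-(a\rho)^2}{(a\rho)^2}J_\ell(a\rho)$. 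Substituting into the Taylor expansions yields, through order $\lambda^4$,
\[
J_\ell(\mu\rho) = J_\ell(a\rho)\Bigl[1 - \tfrac{\ell\lambda^2}{2a^2} + O(\lambda^4)\Bigr],\qquad
\mu\rho\,J_{\ell-1}(\mu\rho) = -J_\ell(a\rho)\tfrac{\rho^2\lambda^2}{2} + O(\lambda^6),
\]
where in the second expansion the $\lambda^4$ contributions from the product $\mu\rho \cdot J_{\ell-1}(\mu\rho)$ cancel against each other. This cancellation is the one delicate point of the calculation.

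With these in hand, the denominator becomes
\[
\mu\rho J_{\ell-1}(\mu\rho) - \ell J_\ell(\mu\rho) = -\ell J_\ell(a\rho)\Bigl[1 + \tfrac{((a\rho)^2 - \ell^2)\lambda^2}{2\ell a^2} + O(\lambda^4)\Bigr].
\]
Dividing numerator by denominator using \eqref{eq:geos} gives
\[
A_\ell = \tfrac{\lambda\rho}{\ell}\Bigl[1 - \tfrac{\ell\lambda^2}{2a^2} - \tfrac{((a\rho)^2 - \ell^2)\lambda^2}{2\ell a^2} + O(\lambda^4)\Bigr] = \tfrac{\lambda\rho}{\ell} - \tfrac{(\lambda\rho)^3}{2\ell^2} + O(\lambda^5),
\]
after the two terms proportional to $\ell\lambda^2/(2a^2)$ cancel and only $(a\rho)^2\lambda^2/(2\ell a^2) = \rho^2\lambda^2/(2\ell)$ remains.

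The main obstacle is simply bookkeeping: one must carry the expansions of $J_\ell(\mu\rho)$ and $J_{\ell-1}(\mu\rho)$ to high enough order (including the correction $-\rho\lambda^4/(8a^3)$ in $\mu\rho - a\rho$) to be confident that the error is genuinely $O(\lambda^5)$ rather than $O(\lambda^4)$, and to witness the cancellations that make the coefficient of $\lambda^3$ as clean as $-\rho^3/(2\ell^2)$ independent of $a$.
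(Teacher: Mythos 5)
Your computation is correct: all four derivative identities check out, the $\lambda^4$ cancellation in $\mu\rho\,J_{\ell-1}(\mu\rho)$ is genuine (the $+J_\ell(a\rho)\rho^2\lambda^4/(8a^2)$ from $-a\rho J_\ell(a\rho)h$ with $h=\mu\rho-a\rho$ exactly offsets $-\tfrac12 J_\ell(a\rho)h^2$), and the final cancellation of the $\ell\lambda^2/(2a^2)$ terms yields the claimed coefficient $-\rho^3/(2\ell^2)$. The paper reaches the same conclusion by a lighter decomposition: instead of $J_\ell'=J_{\ell-1}-\tfrac{\ell}{z}J_\ell$, it uses the pair of recurrences $J_{\ell-1}+J_{\ell+1}=\tfrac{2\ell}{z}J_\ell$ and $J_{\ell-1}-J_{\ell+1}=2J_\ell'$ to write
\[
A_\ell=-\frac{\lambda\rho}{\ell}\,\frac{J_{\ell-1}(\mu\rho)+J_{\ell+1}(\mu\rho)}{J_{\ell-1}(\mu\rho)-J_{\ell+1}(\mu\rho)}=-\frac{\lambda\rho}{\ell}\left(-1+\frac{2J_{\ell-1}(\mu\rho)}{J_{\ell-1}(\mu\rho)-J_{\ell+1}(\mu\rho)}\right),
\]
which isolates the vanishing quantity $J_{\ell-1}(\mu\rho)=O(\lambda^2)$ in the numerator of a single correction term. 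A first-order Taylor expansion of $J_{\ell-1}$ at $a\rho$, together with $J_{\ell-1}'(a\rho)=-J_\ell(a\rho)=-\tfrac{a\rho}{2\ell}J_{\ell+1}(a\rho)$, then gives the result with no second derivatives and no delicate cancellations; your route buys nothing extra but costs more bookkeeping, exactly as you anticipated in your final paragraph. Both arguments implicitly use that $J_\ell(a\rho)\ne 0$ (equivalently $J_{\ell+1}(a\rho)\ne0$) when $J_{\ell-1}(a\rho)=0$, which follows from uniqueness for the Bessel ODE; it would be worth a half-sentence in either write-up.
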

\begin{proof}
Using the recurrence relations
%standard recurrence relations on $J_\ell(z)$ and $J'_\ell(z)$ for $\ell\neq 0$, 
\eqref{eq:recur} we obtain
\begin{equation*}
    A_\ell (\lambda)
    =\frac{-\lambda\rho}{\ell}\frac{J_{\ell-1}(\mu\rho)+J_{\ell+1}(\mu\rho)}{J_{\ell-1}(\mu\rho)-J_{\ell+1}(\mu\rho)}= \frac{-\lambda\rho}{\ell}\left( -1+\frac{2J_{\ell-1}(\mu\rho)}{J_{\ell-1}(\mu\rho)-J_{\ell+1}(\mu\rho)}\right).
\end{equation*}
We will use, again from \eqref{eq:recur},
$$J_{\ell -1}(a \rho)=0\; \Rightarrow \; J_{\ell-1}'(\rho a) = -J_\ell(\rho a)=-\frac{\rho a}{2\ell}J_{\ell +1}(\rho a).$$
Then since for smooth $f$, $f(\mu \rho)= f(a\rho )+f'(a\rho)\rho(\mu -a)+O((\mu-a)^2)$ we get
\begin{align*}
A_\ell (\lambda) &= \frac{-\lambda\rho}{\ell}\left( -1+\frac{2J'_{\ell-1}(a\rho)}{J_{\ell-1}(a\rho)-J_{\ell+1}(a\rho)} \rho (\mu -a) +O( (\mu -a)^2)\right)\\ & 
= \frac{-\lambda\rho}{\ell}\left( -1 +\frac{(\rho \lambda)^2}{2\ell }+O(\lambda^4)\right).
\end{align*}
\end{proof}

\begin{lem}\label{l:sigma1'}
For $\lambda\to 0$, we have the following asymptotic expressions for $\sigma'_1$:
\begin{equation}
    \sigma'_1 (\lambda)= \begin{dcases}
        \frac{-2/\lambda}{4\left(\log(\rho\lambda/2)-1/2+\gamma\right)^2+\pi^2} + O(\lambda/(\log\lambda)^2),& J_0(a\rho)=0\\
        \frac{\rho^4}{8}\lambda^3 + O(\lambda^5\log \lambda),& J_2(a\rho)=0\\
        \frac{J_2(a\rho)}{J_0(a\rho)}\frac{\rho^2}{2}\lambda + O(\lambda^3\log\lambda),& \mathrm{otherwise}.
    \end{dcases}
\end{equation}
\end{lem}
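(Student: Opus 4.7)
The plan is to treat the three cases separately. The generic case ($J_0(a\rho),J_2(a\rho)\ne 0$) and the $J_2(a\rho)=0$ case are direct consequences of Lemma~\ref{l:intlgeq1}, while the $p$-resonance case $J_0(a\rho)=0$ requires a separate analysis based on \eqref{eq:siglprime} and Lemma~\ref{lem:Al}.

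For the cases $J_2(a\rho)=0$ and ``otherwise'' I would apply Lemma~\ref{l:intlgeq1} with $\ell=1$, whose hypothesis $J_{\ell-1}(a\rho)=J_0(a\rho)\ne 0$ is satisfied in both. This gives
\[
\sigma_1'(\lambda)=\left(\frac{\rho^2\lambda}{2 J_0(a\rho)}+O(\lambda^3\log\lambda)\right) J_2(\rho\mu).
\]
In the generic case, the zeroth-order expansion $J_2(\rho\mu)=J_2(a\rho)+O(\lambda^2)$ yields the stated formula at once. In the case $J_2(a\rho)=0$, I would Taylor expand $J_2(\rho\mu)$ to first order in $\mu-a=\lambda^2/(2a)+O(\lambda^4)$ and use the recurrences \eqref{eq:recur}; specifically $J_2'(a\rho)=J_1(a\rho)-(2/a\rho)J_2(a\rho)=J_1(a\rho)$, and $J_0+J_2=(2/z)J_1$ with $J_2(a\rho)=0$ gives $J_1(a\rho)=(a\rho/2)J_0(a\rho)$. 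Hence $J_2(\rho\mu)=(\rho^2\lambda^2/4)J_0(a\rho)+O(\lambda^4)$; the factor $J_0(a\rho)$ cancels against the $J_0(a\rho)^{-1}$ in the prefactor and produces $\rho^4\lambda^3/8+O(\lambda^5\log\lambda)$.

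For the $p$-resonance case $J_0(a\rho)=0$, Lemma~\ref{l:intlgeq1} does not apply, so I would instead use \eqref{eq:siglprime} together with Lemma~\ref{lem:Al} specialized to $\ell=1$, which gives $A_1=\lambda\rho-(\lambda\rho)^3/2+O(\lambda^5)$. This yields $1-A_1^2/(\lambda\rho)^2=(\lambda\rho)^2+O(\lambda^4)$ for the numerator factor in \eqref{eq:siglprime}, and $J_1(\lambda\rho)+A_1 J_1'(\lambda\rho)=\lambda\rho+O(\lambda^3)$ for the first term of the denominator. The decisive computation is for the $Y$-term: I would first sharpen \eqref{eq:Yexp} using the defining series \eqref{e:bessdef} to
\[
Y_1(z)=-\frac{2}{\pi z}+\frac{z}{\pi}\bigl(\log(z/2)+\gamma-\tfrac{1}{2}\bigr)+O(z^3\log z),
\]
then use $Y_1'(z)=Y_0(z)-Y_1(z)/z$ to obtain $Y_1'(z)=2/(\pi z^2)+\pi^{-1}(\log(z/2)+\gamma+1/2)+O(z^2\log z)$, and finally observe the cancellation
\[
Y_1(\lambda\rho)+A_1 Y_1'(\lambda\rho)=\frac{2\lambda\rho}{\pi}\bigl(\log(\lambda\rho/2)+\gamma-\tfrac{1}{2}\bigr)+O(\lambda^3\log\lambda).
\]
Substituting these expansions into \eqref{eq:siglprime} and using $a^2/\mu^2=1+O(\lambda^2)$ gives the stated formula after simplification.

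The main obstacle is tracking the cancellation in the $p$-resonance case: the hypothesis $J_0(a\rho)=0$ enters only through Lemma~\ref{lem:Al}, which forces the leading behavior $A_1\sim\lambda\rho$, and it is precisely this leading behavior that cancels the singular $-2/(\pi\lambda\rho)$ term of $Y_1(\lambda\rho)$ against the $\lambda\rho\cdot 2/(\pi(\lambda\rho)^2)$ term of $A_1 Y_1'(\lambda\rho)$. This is the algebraic footprint of the $p$-resonance at zero and is what produces logarithmic rather than polynomial decay. Executing it cleanly requires one more Taylor term in the expansions of $Y_1$ and $A_1$ than what is available directly from \eqref{eq:Yexp}, which is the only substantive new calculation beyond invoking Lemmas~\ref{l:intlgeq1} and~\ref{lem:Al}.
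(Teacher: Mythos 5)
Your proposal is correct and follows essentially the same route as the paper: the $J_2(a\rho)=0$ and generic cases via Lemma \ref{l:intlgeq1} combined with the Taylor expansion of $J_2(\rho\mu)$ and the recurrence $J_0(a\rho)=\tfrac{2}{a\rho}J_1(a\rho)$, and the $J_0(a\rho)=0$ case via \eqref{eq:siglprime} and Lemma \ref{lem:Al}. The only (harmless) difference is in organizing the cancellation in the $Y$-term: the paper uses the identity $Y_1(z)+zY_1'(z)=zY_0(z)$ from \eqref{eq:recur} to avoid needing the $O(z\log z)$ term of $Y_1$ explicitly, whereas you sharpen the expansion of $Y_1$ directly from the series; your coefficient $\gamma-\tfrac12$ and the resulting leading term agree with the paper's.
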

\begin{proof}
First we consider the case $J_0(\rho a)=0$.  For this, we use for $\ell \geq 1$
\begin{equation}\label{eq:cells}
\mathcal{C}_\ell(\lambda \rho )+\frac{\lambda \rho}{\ell}\mathcal{C}_{\ell}'(\lambda \rho)= \frac{\lambda \rho}{\ell}\mathcal{C}_{\ell-1}(\lambda \rho)
\end{equation}
where $\mathcal{C}_\ell$ is $J_\ell$ or $Y_\ell$.  Using this and  Lemma \ref{lem:Al}, 
\begin{align*}Y_1(\lambda\rho)+A_1 Y'_1(\lambda\rho)& = \lambda \rho Y_0(\lambda \rho) -\frac{(\lambda \rho)^3}{2} Y_1'(\lambda \rho)+O(\lambda^3)\\
& = \frac{2}{\pi}\lambda \rho [ \log (\lambda \rho/2)+\gamma]  -\frac{\lambda \rho}{\pi}    + O(\lambda^3 \log \lambda).
\end{align*}
Similarly,
\begin{equation}
J_1(\lambda\rho)+A_1 J'_1(\lambda\rho)  = \lambda \rho J_0(\lambda \rho) -\frac{(\lambda \rho)^3}{2} J_1'(\lambda \rho)+O(\lambda^5)
= \lambda \rho +O(\lambda^3).
\end{equation}
Using these and Lemma \eqref{lem:Al} in \eqref{eq:siglprime} completes the proof for this case.

For the remaining cases we use Lemma \ref{l:intlgeq1}.  If $J_{0}(\rho a)J_{2}(\rho a)\not =0$, we get the third case immediately from Lemma \ref{l:intlgeq1}.  
To handle the remaining case, the second one, note that if $\ell \geq 1$ and $J_{\ell+1}(\rho a)=0$, then  $J_{\ell +1}'(\rho a)=J_\ell(\rho a)$ and 
\begin{equation}\label{eq:Jellp1}
J_{\ell+1}(\rho \mu)= J_{\ell}(\rho a)\frac{\rho \lambda^2}{2a} +O(\lambda^4)\; \text{when}\; J_{\ell+1}(\rho a)=0.
\end{equation}
Moreover, since $J_2(\rho a)=0$, $J_0(\rho a) =\frac{2}{a \rho}J_1(\rho a)$.
Using these in Lemma \ref{l:intlgeq1} with $\ell =1$ completes the proof of the second case.
\end{proof}

The proof of the asymptotic expansion of $\sigma_\ell'$ for $\ell \geq 2$ is similar to the $\ell=1$ case handled by Lemma \ref{l:sigma1'}.
\begin{lem}\label{l:sigmaell'}
For $\lambda\to 0$, we have the following asymptotic expressions for $\sigma'_\ell$ for $\ell\geq 2$:
\begin{equation}
    \sigma'_\ell(\lambda) = \begin{dcases}
        -\frac{\rho}{\ell[(\ell-2)!]^2}\left(\frac{\lambda\rho}{2}\right)^{2\ell-3}+\widetilde O_\ell,& J_{\ell-1}(a\rho)=0\\
        \frac{\rho}{\ell[(\ell-1)!]^2}\left(\frac{\lambda\rho}{2}\right)^{2\ell+1}+O(\lambda^{2\ell+3}),& J_{\ell+1}(a\rho)=0\\
        \frac{J_{\ell+1}(a\rho)}{J_{\ell-1}(a\rho)}\frac{\rho}{[(\ell-1)!]^2}\left(\frac{\lambda\rho}{2}\right)^{2\ell-1}+O(\lambda^{2\ell+1}),& \mathrm{otherwise}.
    \end{dcases}
\end{equation}
where $\widetilde O_2 = O(\lambda^3\log\lambda)$ and $\widetilde O_\ell=O(\lambda^{2\ell-1})$ for $\ell\geq 3$.
\end{lem}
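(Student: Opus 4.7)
The plan is to follow the template of Lemma \ref{l:sigma1'} closely, splitting into the three cases and reducing two of them to Lemma \ref{l:intlgeq1}. For the third case, where $J_{\ell-1}(a\rho) J_{\ell+1}(a\rho) \ne 0$, Lemma \ref{l:intlgeq1} applies directly; Taylor expanding $J_{\ell+1}(\rho\mu) = J_{\ell+1}(\rho a) + O(\mu - a) = J_{\ell+1}(\rho a) + O(\lambda^2)$ (using $\mu - a = \lambda^2/(2a) + O(\lambda^4)$) and substituting yields the claimed expression immediately.

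For case 2, where $J_{\ell+1}(a\rho) = 0$ (and hence $J_{\ell-1}(a\rho) \ne 0$ since consecutive-index Bessel functions share no positive zeros), Lemma \ref{l:intlgeq1} still applies. I will Taylor expand $J_{\ell+1}(\rho\mu)$ around $\mu = a$: the first two recurrences in \eqref{eq:recur} give $J_{\ell+1}'(\rho a) = J_\ell(\rho a)$, and $J_{\ell-1}(\rho a) + J_{\ell+1}(\rho a) = (2\ell/(\rho a)) J_\ell(\rho a)$ gives $J_\ell(\rho a) = (\rho a/(2\ell)) J_{\ell-1}(\rho a)$. Combining these with $\mu - a = \lambda^2/(2a) + O(\lambda^4)$ yields
\[
J_{\ell+1}(\rho\mu) = \frac{\rho^2 \lambda^2}{4\ell} J_{\ell-1}(\rho a) + O(\lambda^4),
\]
so the $J_{\ell-1}(\rho a)$ in the denominator of Lemma \ref{l:intlgeq1} cancels, producing the claimed $\rho(\lambda\rho/2)^{2\ell+1}/(\ell [(\ell-1)!]^2)$.

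Case 1 is the delicate one, since Lemma \ref{l:intlgeq1}'s hypothesis $J_{\ell-1}(\rho a)\ne 0$ fails, and I must work directly from \eqref{eq:siglprime}, as in the $J_0(a\rho) = 0$ branch of Lemma \ref{l:sigma1'}. The key identity is \eqref{eq:cells}: writing
\[
Y_\ell(\lambda\rho) + A_\ell Y_\ell'(\lambda\rho) = \tfrac{\lambda\rho}{\ell} Y_{\ell-1}(\lambda\rho) + \bigl(A_\ell - \tfrac{\lambda\rho}{\ell}\bigr) Y_\ell'(\lambda\rho),
\]
and using Lemma \ref{lem:Al} to get $A_\ell - \lambda\rho/\ell = -(\lambda\rho)^3/(2\ell^2) + O(\lambda^5)$, I expand $Y_{\ell-1}$ and $Y_\ell'$ from \eqref{eq:Yexp}. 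The leading $O(\lambda^{2-\ell})$ contributions from $(\lambda\rho/\ell) Y_{\ell-1}(\lambda\rho)$ and $-(\lambda\rho)^3/(2\ell^2) Y_\ell'(\lambda\rho)$ add (rather than cancel) to produce $-(\ell-2)!\,2^{\ell-1}/[\pi (\lambda\rho)^{\ell-2}]$. The relative correction is $O(\lambda^2)$ for $\ell \ge 3$ and $O(\lambda^2 \log \lambda)$ for $\ell = 2$, the latter due to the $z\log z$ term in $Y_1$ (from \eqref{eq:Yexp}); this is exactly what produces the exceptional $\widetilde O_2$. Similarly $J_\ell(\lambda\rho) + A_\ell J_\ell'(\lambda\rho) = O(\lambda^\ell)$, which is negligible in the denominator. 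For the numerator $1 - (\ell/(\lambda\rho))^2 A_\ell^2$, Lemma \ref{lem:Al} gives $A_\ell^2 = (\lambda\rho/\ell)^2(1 - (\lambda\rho)^2/\ell) + O(\lambda^6)$, hence $1 - (\ell/(\lambda\rho))^2 A_\ell^2 = (\lambda\rho)^2/\ell + O(\lambda^4)$. Combining with $a^2/\mu^2 = 1 + O(\lambda^2)$ and plugging everything into \eqref{eq:siglprime} yields $\sigma_\ell'(\lambda) = -\rho(\lambda\rho/2)^{2\ell-3}/(\ell [(\ell-2)!]^2) + \widetilde O_\ell$.

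The main obstacle is the bookkeeping in case 1: one must recognize the near-cancellation built into \eqref{eq:cells}, keep track of which subleading term of $A_\ell$ from Lemma \ref{lem:Al} combines with $Y_\ell'$ at the same order as $(\lambda\rho/\ell) Y_{\ell-1}$, and handle separately the logarithmic singularity in $Y_1$ that forces the worse error for $\ell=2$. Everything else is a routine manipulation of the recurrences \eqref{eq:recur} and the expansion \eqref{eq:Yexp}.
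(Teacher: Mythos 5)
Your proposal is correct and follows essentially the same route as the paper: cases 2 and 3 are reduced to Lemma \ref{l:intlgeq1} (with the Taylor expansion of $J_{\ell+1}(\rho\mu)$ via the recurrences, exactly the paper's \eqref{eq:Jellp1}), and case 1 is handled directly from \eqref{eq:siglprime} using \eqref{eq:cells} together with Lemma \ref{lem:Al}, including the correct identification that the two $O(\lambda^{-\ell+2})$ contributions add to give $-2(\ell-2)!\pi^{-1}(\lambda\rho/2)^{-\ell+2}$ and that the $z\log z$ term in $Y_1$ is what forces the weaker error $\widetilde O_2$.
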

\begin{proof}
We begin with the case $J_{\ell -1}(\rho a)=0$.   Using \eqref{eq:cells} and Lemma \ref{lem:Al} gives
\begin{align}\label{eq:Yells}
Y_\ell(\lambda \rho)+A_\ell Y_\ell'(\lambda \rho)& = 
\frac{\lambda \rho}{\ell} Y_{\ell-1}(\lambda \rho)-\frac{(\lambda \rho)^3}{2\ell^2} Y_\ell'(\lambda \rho)+O(\lambda^{-\ell+4})\nonumber \\
& = -\frac{\lambda \rho (\ell -2)!}{\ell \pi } \left( \frac{\lambda \rho}{2}\right)^{-\ell +1}-\frac{(\lambda \rho)^3 \ell !}{4\pi \ell^2 }\left(\frac{\lambda \rho}{2}\right)^{-\ell -1}+O(\lambda^{-\ell + 4}) +\delta_{\ell, 2}O(\lambda^2\log \lambda)\nonumber 
\\ & = -2 \frac{(\ell-2)!}{\pi}\left( \frac{\lambda \rho}{2}\right)^{-\ell +2}+ O( \lambda^{-\ell+4})+\delta_{\ell, 2}O(\lambda^2\log \lambda)
\end{align}
Next we observe that $J_\ell(\lambda \rho)+A_\ell J_{\ell}'(\lambda \rho)=O(\lambda^\ell)$.  
Using that $1-\frac{\ell^2}{(\lambda \rho)^2}A_\ell^2= (\lambda \rho)^2/\ell +O(\lambda^4)$ here, inserting these in \eqref{eq:siglprime} yields
\begin{align*}
\sigma_\ell'(\lambda)& = -\frac{2}{\lambda}\frac{(\lambda \rho)^{2}/\ell +O(\lambda^4)}{4[(\ell -2)!]^2 + O( \lambda^2)+ \delta_{\ell, 2}O(\lambda^2\log \lambda)}
\left( \frac{\lambda \rho}{2}\right)^{2\ell -4}\\ 
&= -\frac{\rho}{\ell [(\ell-2)!]^2}\left(\frac{\lambda \rho}{2}\right)^{2\ell -3}+O(\lambda^{2\ell -1})+  \delta_{\ell, 2}O(\lambda^3\log \lambda)
\end{align*}

For the remaining cases $J_{\ell -1}(\rho a)\not =0$.  If, in addition, $J_{\ell+1}(\rho a)\not =0$, we get the result in the third case immediately from Lemma 
\ref{l:intlgeq1}.   The second case, with $J_{\ell+1}(\rho a)=0$, follows from 
combining Lemma \ref{l:intlgeq1} with \eqref{eq:Jellp1}.
\end{proof}

The next theorem follows by combining the results of Lemmas \ref{l:s'0}, \ref{l:sigma1'}, and \ref{l:sigmaell'}, recalling Lemma \ref{l:spfundamentals}
 justifies convergence of the series.
\begin{thm}\label{t:mssp} When $\lambda \downarrow 0$, $\sigma'(\lambda)$ has the following asymptotics:
\begin{equation*}
    \sigma'(\lambda) = \begin{cases}
        \frac{-2/\lambda}{4(\log(\lambda \rho/2)  + \gamma)^2+\pi^2} + \frac{-4/\lambda}{4(\log(\lambda\rho/2) -1/2 + \gamma)^2+\pi^2} + O(\lambda/(\log\lambda)^2),&J_0(a\rho)=0\\
        -\frac{3}{2}\rho^2\lambda +O(\lambda^3\log\lambda),&J_1(a\rho)=0\\
        \frac{-2/\lambda}{4(\log(\lambda/2) + C(\rho,a) + \gamma)^2+\pi^2} +\frac{J_2(a\rho)}{J_0(a\rho)}\rho^2\lambda+ O(\lambda/(\log\lambda)^2),&\mathrm{otherwise}.
    \end{cases}
\end{equation*}
Here $C(\rho,a) = \log\rho + \frac{1}{a\rho}\frac{J_0(a\rho)}{J_1(a\rho)}$.
\end{thm}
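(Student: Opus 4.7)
My plan is to assemble Theorem \ref{t:mssp} directly from the mode-by-mode asymptotics already established, using the series representation
\[
\sigma'(\lambda) = \sigma_0'(\lambda) + 2\sum_{\ell=1}^\infty \sigma_\ell'(\lambda)
\]
coming from \eqref{eq:sPhase}. The point of Lemma \ref{l:spfundamentals}, and in particular the remark following it, is that the tail of this series is $O(\lambda^{2\ell_0+1})$ for any fixed $\ell_0$, so that only finitely many modes contribute to the leading asymptotics. The work therefore reduces to identifying the dominant modes in each of the three cases of the theorem and collecting the corresponding formulas from Lemmas \ref{l:s'0}, \ref{l:sigma1'}, \ref{l:sigmaell'}.

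In the first case $J_0(a\rho)=0$, I would use the second branch of Lemma \ref{l:s'0}. Since $J_0(a\rho)=0$ implies $C(\rho,a)=\log\rho$, the argument of the logarithm simplifies to $\log(\lambda\rho/2)+\gamma$, producing the first summand. The second summand comes from $2\sigma_1'$ using the first branch of Lemma \ref{l:sigma1'}. For $\ell\ge 2$, successive zeros of $J_0$ and $J_{\ell\pm 1}$ do not coincide, so we are in the third branch of Lemma \ref{l:sigmaell'}, giving $O(\lambda^{2\ell-1})$, absorbed into $O(\lambda/(\log\lambda)^2)$.

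In the second case $J_1(a\rho)=0$, the first branch of Lemma \ref{l:s'0} gives $\sigma_0'=-\frac{\rho^4}{8}\lambda^3+O(\lambda^5\log\lambda)$. For $\sigma_1'$ the recurrence \eqref{eq:recur} gives $J_2(a\rho)=-J_0(a\rho)\neq 0$, so the third branch of Lemma \ref{l:sigma1'} applies and produces $\sigma_1'=-\frac{\rho^2}{2}\lambda+O(\lambda^3\log\lambda)$. For $\sigma_2'$ the hypothesis $J_{\ell-1}(a\rho)=J_1(a\rho)=0$ puts us in the first branch of Lemma \ref{l:sigmaell'} with $\ell=2$, giving $\sigma_2'=-\frac{\rho^2}{4}\lambda+O(\lambda^3\log\lambda)$. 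Higher $\ell$ contribute $O(\lambda^{2\ell-1})$. Summing yields $-\rho^2\lambda-\tfrac{\rho^2}{2}\lambda+O(\lambda^3\log\lambda)=-\tfrac{3}{2}\rho^2\lambda+O(\lambda^3\log\lambda)$.

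In the third ``otherwise'' case $J_0(a\rho)J_1(a\rho)\neq 0$, the second branch of Lemma \ref{l:s'0} produces the $\log$-type singular term, and the third branch of Lemma \ref{l:sigma1'} gives $2\sigma_1'=\frac{J_2(a\rho)}{J_0(a\rho)}\rho^2\lambda+O(\lambda^3\log\lambda)$. All $\sigma_\ell'$ with $\ell\ge 2$ are $O(\lambda^{2\ell-1})=O(\lambda^3)$ regardless of which branch of Lemma \ref{l:sigmaell'} applies (since even the worst case $J_{\ell-1}(a\rho)=0$ gives $O(\lambda^{2\ell-3})=O(\lambda)$ only for $\ell=2$, which cannot happen here because $J_1(a\rho)\neq 0$). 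Adding up gives the stated formula. The only minor subtlety is checking that in each case the error terms from the individual $\sigma_\ell'$ really are dominated by $O(\lambda/(\log\lambda)^2)$ or $O(\lambda^3\log\lambda)$ as stated; this is routine given the explicit remainders in the three lemmas and the geometric bound \eqref{eq:siglprimebd} for the tail. There is no conceptual obstacle — the work is purely organizational.
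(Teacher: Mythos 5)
Your proposal is correct and is essentially identical to the paper's proof, which simply states that Theorem \ref{t:mssp} follows by combining Lemmas \ref{l:s'0}, \ref{l:sigma1'}, and \ref{l:sigmaell'} with the tail bound of Lemma \ref{l:spfundamentals}; you have carried out that combination explicitly, and the arithmetic in each case (including the sum $-\rho^2\lambda-\tfrac{\rho^2}{2}\lambda$ in the $J_1(a\rho)=0$ case) checks out. The only point worth making explicit is that the case analysis uses the fact that $J_0$ and $J_1$ have no common positive zero (so that, e.g., the $\ell=2$ mode cannot contribute an order-$\lambda$ term in the first and third cases), which follows from the recurrence relations \eqref{eq:recur} and uniqueness for Bessel's equation.
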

This theorem, when combined with Lemma \ref{l:resinlimit}, proves Theorem \ref{t:spintro} and gives an explicit expression for the constants $b$ which appear
in the statement of that theorem.

\section{Numerical Methods}

The points in the plots for $\ell=1,2,3$ in Figure \ref{f:3foldres} and \ref{f:moreres} are the values $\{\operatorname{sgn}(k)k^2\Delta\varepsilon : k=-N,\ldots,N\}$, where $\Delta\varepsilon=0.0036$ and $N=15$. 
 The  $\varepsilon$ values for the $\ell=0$ plot in Figure \ref{f:ell=0} are $\{-k\Delta\varepsilon : k=2,\ldots,N\}$, with $N = 25$ and $\Delta\varepsilon = 0.1$.
 The the points on the blue curve of each plot are the approximate formulas for $\lambda_\varepsilon$, and the points on the red curve are the ``exact" values for $\lambda_\varepsilon$ obtained by applying Newton's method 20 times to locate the solution to the equation $B^{(2)}_\ell(\lambda_\varepsilon,\sqrt{a^2-\varepsilon},\rho) = 0$ with an initial guess given by the approximation from the blue curve.

\subsection*{Acknowledgments} The authors are grateful to Maciej Zworksi for suggesting studying the low energy asymptotics of the scattering phase, and for pointing out the connection to the results of \cite{haze}.
TC and KD gratefully
acknowledge partial support from Simons Collaboration Grants for Mathematicians.  KD was, in addition, 
 partially supported by NSF grant DMS-1708511. CG is supported by NSF GRFP grant DGE-2236662.

\end{document}